\newtheorem{theorem}{Theorem}[section]
\newtheorem{lemma}[theorem]{Lemma}
\newtheorem{proposition}[theorem]{Proposition}
\newtheorem{corollary}[theorem]{Corollary}
\newtheorem{assumption}[theorem]{Assumption}
\newtheorem{remark}[theorem]{Remark}
\begin{document}
\begin{frontmatter}
\title{On a Nadaraya-Watson Estimator with Two Bandwidths}
\runtitle{On a Nadaraya-Watson Estimator with Two Bandwidths}
\runauthor{F. Comte and N. Marie}
\begin{aug}
\author{\fnms{Fabienne} \snm{COMTE$^{\dag}$}
\ead[label=e1]{fabienne.comte@parisdescartes.fr}}
\and
\author{\fnms{Nicolas} \snm{MARIE$^{\diamond}$}
\ead[label=e2]{nmarie@parisnanterre.fr}}
\address{$^{\dag}$Laboratoire MAP5, Universit\'e de Paris, Paris, France\\
\printead{e1}}
\address{$^{\diamond}$Laboratoire MODAL'X, Universit\'e Paris Nanterre, Nanterre, France\\
\printead{e2}}
\end{aug}
\begin{abstract} In a regression model, we write the Nadaraya-Watson estimator of the regression function as the quotient of two kernel estimators, and propose a bandwidth selection method for both the numerator and the denominator. We prove risk bounds for both data driven estimators and for the resulting ratio. The simulation study confirms that both estimators have good performances, compared to the ones obtained by cross-validation selection of the bandwidth. However, unexpectedly, the single-bandwidth cross-validation estimator is found to be much better than the ratio of the previous two good estimators, in the small noise context. However, the two methods have similar performances in models with large noise.
\end{abstract}
\begin{keyword}[class=MSC]
\kwd[Primary ]{62G08}
\kwd[; secondary ]{62G05}
\end{keyword}
\begin{keyword}
\kwd{Bandwidth selection}
\kwd{Nonparametric kernel estimator}
\kwd{Quotient estimator}
\kwd{Regression model}
\end{keyword}
\tableofcontents
\end{frontmatter}
%


%
\section{Introduction}
Consider $n\in\mathbb N\backslash\{0\}$ independent random variables $X_1,\dots,X_n$ having the same probability density $f$ with respect to Lebesgue's measure. Consider also the random variables $Y_1,\dots,Y_n$ defined by
\begin{displaymath}
Y_i = b(X_i) +\varepsilon_i
\textrm{ $;$ }i\in\{1,\dots,n\},
\end{displaymath}
where $b$ is a measurable function from $\mathbb R$ into itself and $\varepsilon_1,\dots,\varepsilon_n$ are $n$ i.i.d. centered random variables with variance $\sigma^2 > 0$ and  independent of $X_1,\dots,X_n$.
\\
Since Nadaraya \cite{NADARAYA64} and Watson \cite{WATSON64}, a lot of consideration has been given to the estimator of $b$ defined by
\begin{displaymath}
\widehat b_{n,h}(x) :=
\frac{\sum_{i = 1}^{n}K\left(\frac{X_i - x}h\right)Y_i}{\sum_{i = 1}^{n}K\left(\frac{X_i - x}{h}\right)}
\textrm{ $;$ }x\in\mathbb R,
\end{displaymath}
where $K :\mathbb R\rightarrow\mathbb R$ is a kernel, and $h > 0$ is the bandwidth. This estimator has been dealt with as a weighted estimator, for $K\geqslant 0$:
\begin{displaymath}
\widehat b_{n,h}(x) = \sum_{i=1}^n w_{n,h}^{(i)}(x) Y_i, \quad w_{n,h}^{(i)}(x)= \frac{K\left(\frac{X_i - x}h\right)}{\sum_{i=1}^n K\left(\frac{X_k - x}h\right)},
\end{displaymath}
and is often called "local average regression". It is studied e.g. in Jones and Wand \cite{JW95}, Gy\"orfi {\it et al.} \cite{GKKW02} or defined in Tsybakov \cite{TSYBAKOV09}. Recent papers still propose methods to improve the estimation, see Chang {\it et al.} \cite{CLW17}. Several strategies have been proposed to select the bandwidth in a data driven way. Cross-validation based on leave-one-out principle is one of the most standard methods to perform this choice (see Gy\"orfi {\it et al.} \cite{GKKW02}), even if a lot of refinements have been proposed. Optimal rates depend on the regularity of the function $b$ and have been first established by Stone \cite{STONE82}: roughly speaking, they are of order $O(n^{-p/(2p + 1)})$ for $b$ admitting $p$ derivatives. From theoretical point of view, the rates of the adaptive final estimator are not always given, nor proved.
\\
In this paper, we re-write the Nadaraya-Watson as the quotient of two estimators, an estimator of $bf$ divided by an estimator of $f$:
\begin{displaymath}
\widehat{bf}_{n,h}(x) :=
\frac{1}{nh}\sum_{i = 1}^{n}K\left(\frac{X_i - x}{h}\right)Y_i
\end{displaymath}
and 
\begin{displaymath}
\widehat f_{n,h'}(x) :=
\frac{1}{nh'}\sum_{i = 1}^{n}K\left(\frac{X_i - x}{h'}\right).
\end{displaymath}
Clearly, $\widehat f_{n,h'}$ is the Parzen-Rosenblatt estimator of $f$ (see Rosenblatt \cite{ROSENBLATT56} and Parzen \cite{PARZEN62}). The question we are interested in is the following one: can we choose separately the two bandwidths in an adaptive way and obtain good performance for each, and then for the ratio? This is why we study the estimator 
\begin{displaymath}
\widehat b_{n,h,h'}(x) :=
\frac{\widehat{bf}_{n,h}(x)}{\widehat f_{n,h'}(x)}
\textrm{ $;$ }x\in\mathbb R
\end{displaymath}
as an estimator of the regression function $b$, where $h,h' > 0$ and $K :\mathbb R\rightarrow\mathbb R$ is a (not necessarily nonnegative) kernel. Thus, $\widehat b_{n,h,h}=\widehat b_{n,h}$ is the initial Nadaraya-Watson estimator of $b$ with single bandwidth $h$. For this reason, the estimator studied in this paper is called the \textit{two bandwidths Nadaraya-Watson} (2bNW) estimator.
\\
Adaptive estimation of the density has been widely studied recently. A bandwidth selection method  has been proposed  by Goldenschluger and Lepski \cite{GL11}, and proved to reach the adequate bias-variance compromise. Implementation of this method revealed to be difficult due to the choice of two constants involved in the procedure, the intuition of which is not obvious. This is why the question was further investigated by Lacour {\it et al.} \cite{LMR17}: they improve and modify the strategy by using specific theoretical tools for their proofs. Precisely, thanks to a deviation inequality for U-statistics proved by Houdr\'e and Reynaud-Bouret \cite{HRB03}, they bound the Mean Integrated Square Error of their final estimator, which they call PCO (Penalised Comparison to Overfitting) estimator. Numerically, the good performance of their proposal has been illustrated in a naive way and for high order kernels in Comte and Marie \cite{CM20}, and through a systematic numerical study in Lacour~{\it et al.} \cite{LMRV}, including the multivariate case. These two methods and the associated results are dedicated to the selection of $h'$ for $\widehat f_{n,h'}(x)$, and we can use them. Unfortunately, the theoretical results do not apply to $\widehat{bf}_{n,h}(x)$, mainly because they hold under a boundedness assumption: in our context, this would lead to assume that the $Y_i$'s are bounded. We do not want to require such an assumption as it would exclude the case of Gaussian errors $\varepsilon_i$, for instance. Thus, we give moment assumptions under which the Goldenshluger and Lepski method on the one hand (see Section \ref{section_GL}) and the PCO estimator on the other hand (see Section \ref{LMR}) can be applied to the estimation of $bf$. When gathering the results for the numerator and the denominator, we can bound the risk of the quotient estimator of $b$.
\\
Concretely, we implement the PCO method for $bf$ and compare it with a cross-validation (CV) strategy: in our examples, PCO almost always performs slightly better than CV. Therefore, the PCO adaptive estimation strategies for $f$ and for $bf$ are clearly good. However, unexpectedly, for small noise ($\sigma=0.1$), the quotient fails systematically to beat the specific regression CV method. Even if we compare the classical single-bandwidth CV regression estimator to the ratio of the oracles estimators of the numerator and the denominator, the former wins, and we obtain a quotient with two bandwidths which is in mean much less good than the CV estimator with single bandwidth. In practice, the  bandwidth  selected by the CV algorithm in that case is very small, and associated to quite bad estimators of the numerator and of the denominator. This remark is of important interest for practitioners. In a second time, we increased the noise ($\sigma=0.7$), and finally obtained results indicating that the two methods can have similar Mean Integrated Squared Errors (MISE) in this more difficult context. This, together with the fact we establish a theoretical risk bound on the PCO adaptive 2bNW estimator, imply  that the PCO method, for both numerator and denominator, remains an interesting bandwidth selection method. Moreover, we believe that both positive but also negative results are of interest, and detailed tables, explanations and discussion are given in Section \ref{section_simu}.
\\
\\
\textbf{Notations:}
\begin{enumerate}
 \item For every square integrable functions $f,g :\mathbb R\rightarrow\mathbb R$,
 \begin{displaymath}
 (f\ast g)(x) :=
 \int_{-\infty}^{\infty}
 f(x - y)g(y)dy
 \textrm{ $;$ }
 x\in\mathbb R.
 \end{displaymath}
 \item $K_{\varepsilon} := 1/\varepsilon K(\cdot /\varepsilon)$ for every $\varepsilon > 0$.
\end{enumerate}
%


%
\section{Bound on the MISE of the 2bNW estimator}\label{section_MISE}
First, we state some simple risk bound results in the case of a fixed bandwidth.
\\
\\
Consider $\beta > 0$ and $\ell :=\lfloor\beta\rfloor$, where $\lfloor\beta\rfloor$ denotes the largest integer smaller than $\beta$. In the sequel, the kernel $K$ and the density function $f$ fulfill the following assumption.
%


%
\begin{assumption}\label{assumption_K_f_numerator}$\;$\\
\vspace{-0.5cm}
\begin{itemize}
 \item[(i)] The map $K$ belongs to $\mathbb L^2(\mathbb R,dy)$, $K$ is bounded and $\int_{{\mathbb R}}K(y)dy = 1$.
 \item[(ii)] The density function $f$ is bounded.
\end{itemize}
\end{assumption}
\noindent
Under this assumption, a suitable control of the MISE of $\widehat{bf}_{n,h}$ has been established in Comte \cite{COMTE17}, Proposition 4.2.1.
%


%
\begin{proposition}\label{bound_MISE_numerator_2bNW}
Under Assumption \ref{assumption_K_f_numerator},
\begin{displaymath}
\mathbb E(\|\widehat{bf}_{n,h} - bf\|_{2}^{2})
\leqslant
\|bf - (bf)_h\|_{2}^{2} +\frac{\mathfrak c_{K,Y}}{nh}
\end{displaymath}
where $(bf)_h := K_h\ast(bf)$ and $\mathfrak c_{K,Y} :=\|K\|_{2}^{2}\mathbb E(Y_{1}^{2})$.
\end{proposition}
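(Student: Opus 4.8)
The plan is to use the standard bias--variance decomposition of the MISE adapted to the estimator $\widehat{bf}_{n,h}$. First I would write $\widehat{bf}_{n,h}(x) = \frac{1}{n}\sum_{i=1}^{n} K_h(X_i - x)Y_i$ with $K_h = \frac1h K(\cdot/h)$, and compute its expectation. Since the pairs $(X_i,Y_i)$ are i.i.d. and $\mathbb E(Y_i\mid X_i) = b(X_i)$ (the $\varepsilon_i$ being centered and independent of the $X_i$), one gets $\mathbb E(\widehat{bf}_{n,h}(x)) = \mathbb E(K_h(X_1 - x)b(X_1)) = \int K_h(u - x)b(u)f(u)\,du = (K_h \ast (bf))(x) = (bf)_h(x)$, using that $K$ integrates to $1$. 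This identifies the bias term: integrating over $x$, $\|\mathbb E(\widehat{bf}_{n,h}) - bf\|_2^2 = \|(bf)_h - bf\|_2^2$, the first term in the bound.

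Next I would handle the variance (integrated) term. By the usual orthogonality, $\mathbb E(\|\widehat{bf}_{n,h} - bf\|_2^2) = \|(bf)_h - bf\|_2^2 + \int \operatorname{Var}(\widehat{bf}_{n,h}(x))\,dx$. Because $\widehat{bf}_{n,h}(x)$ is an average of $n$ i.i.d.\ terms, $\operatorname{Var}(\widehat{bf}_{n,h}(x)) = \frac1n \operatorname{Var}(K_h(X_1 - x)Y_1) \leqslant \frac1n \mathbb E(K_h(X_1-x)^2 Y_1^2)$. Conditioning on $X_1$ and using $\mathbb E(Y_1^2\mid X_1) = b(X_1)^2 + \sigma^2$, or more crudely just bounding by $\mathbb E(Y_1^2)$ after we integrate, I would write $\int \mathbb E(K_h(X_1 - x)^2 Y_1^2)\,dx = \mathbb E\!\left(Y_1^2 \int K_h(X_1 - x)^2\,dx\right) = \mathbb E(Y_1^2)\,\|K_h\|_2^2$, where the inner integral is translation invariant. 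A change of variables gives $\|K_h\|_2^2 = \frac1h\|K\|_2^2$ (this is where Assumption~\ref{assumption_K_f_numerator}(i), $K \in \mathbb L^2$, is used). Combining, $\int \operatorname{Var}(\widehat{bf}_{n,h}(x))\,dx \leqslant \frac{1}{nh}\|K\|_2^2 \,\mathbb E(Y_1^2) = \frac{\mathfrak c_{K,Y}}{nh}$, which is the second term.

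I expect no serious obstacle here, since this is essentially the Parzen--Rosenblatt variance computation with the extra factor $Y_i^2$; the boundedness of $f$ in Assumption~\ref{assumption_K_f_numerator}(ii) is only needed if one wants the sharper constant $\|K\|_2^2(\|b^2 f\|_\infty \cdot 1 + \sigma^2\|f\|_\infty)$ coming from $\mathbb E(Y_1^2\mid X_1 = u)f(u) = (b(u)^2+\sigma^2)f(u)$, but for the stated bound with $\mathfrak c_{K,Y} = \|K\|_2^2\,\mathbb E(Y_1^2)$ the crude bound $\operatorname{Var} \leqslant \mathbb E(\,\cdot\,)$ already suffices. The only point requiring mild care is the interchange of expectation and the $x$-integral (Fubini/Tonelli), which is justified since the integrand is nonnegative, and the finiteness of $\mathbb E(Y_1^2) = \mathbb E(b(X_1)^2) + \sigma^2$ together with $K \in \mathbb L^2$. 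Since this result is exactly Proposition 4.2.1 of Comte~\cite{COMTE17}, I would simply cite it, but the sketch above reconstructs the argument in full.
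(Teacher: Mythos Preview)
Your proposal is correct and matches the paper's treatment: the paper does not give an independent proof but simply cites Comte~\cite{COMTE17}, Proposition~4.2.1, exactly as you suggest. Your reconstruction via the bias--variance decomposition, with $\mathbb E(\widehat{bf}_{n,h}(x)) = (bf)_h(x)$ and $\int\operatorname{Var}(\widehat{bf}_{n,h}(x))\,dx \leqslant n^{-1}\mathbb E(Y_1^2)\|K_h\|_2^2 = \mathfrak c_{K,Y}/(nh)$, is the standard argument behind that cited result.
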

\noindent
In order to provide a suitable control on the MISE of the 2bNW estimator, we assume that $b$ and $f$ fulfill the following assumption.
%


%
\begin{assumption}\label{assumption_b_f}
The function $b^2f$ is bounded by a constant $\mathfrak c_{b,f} > 0$.
\end{assumption}
\noindent
Note that this assumption does not require that $b$ is bounded and is satisfied in most classical examples.
\\
Moreover, for any $\mathcal S\in\mathcal B(\mathbb R)$, consider the norm $\|.\|_{2,f,\mathcal S}$ on $\mathbb L^2(\mathcal S, f(x)dx)$ defined by
\begin{displaymath}
\|\varphi\|_{2,f,\mathcal S} :=
\left(\int_{\mathcal S}\varphi(x)^2f(x)dx\right)^{1/2}
\textrm{$;$ }
\forall\varphi\in\mathbb L^2(\mathcal S,f(x)dx).
\end{displaymath}
%


%
\begin{proposition}\label{bound_MISE_2bNW}
Let $m_n$ be a positive real number and consider
\begin{displaymath}
\mathcal S_n :=\{x\in\mathbb R : f(x)\geqslant m_n\}.
\end{displaymath}
Under Assumptions \ref{assumption_K_f_numerator} and \ref{assumption_b_f},
\begin{displaymath}
\mathbb E(\|\widehat b_{n,h,h'} - b\|_{2,f,\mathcal S_n}^{2})
\leqslant
\frac{8\mathfrak c_f}{m_{n}^{2}}\left(
\|bf - (bf)_h\|_{2}^{2} +\frac{\mathfrak c_{K,Y}}{nh}
+ 2\mathfrak c_{b,f}\left(
\|f - f_{h'}\|_{2}^{2} +\frac{\mathfrak c_K}{nh'}\right)\right)
\end{displaymath}
where $(bf)_h := K_h\ast(bf)$, $f_{h'} = K_{h'}\ast f$, $\mathfrak c_f :=\|f\|_{\infty}^{2}\vee 1$ and $\mathfrak c_K := \int_{{\mathbb R}}K(y)^2dy.$
\end{proposition}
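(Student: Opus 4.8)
The plan is to express the error of the quotient through the errors of the numerator $\widehat{bf}_{n,h}$ and of the denominator $\widehat f_{n,h'}$, and then to invoke Proposition \ref{bound_MISE_numerator_2bNW} for the former and the classical variance--bias bound $\mathbb E(\|\widehat f_{n,h'} - f\|_{2}^{2}) \leqslant \|f - f_{h'}\|_{2}^{2} + \mathfrak c_K/(nh')$ for the latter (this one following from $\mathbb E(\widehat f_{n,h'}) = f_{h'}$, from $\textrm{Var}(\widehat f_{n,h'}(x)) \leqslant (nh')^{-1}\int K(u)^2 f(x + uh')\,du$, and from $\int f = 1$). The starting point is the pointwise identity, valid wherever $\widehat f_{n,h'}(x) \neq 0$,
\[
\widehat b_{n,h,h'}(x) - b(x)
= \frac{1}{\widehat f_{n,h'}(x)}\left[\big(\widehat{bf}_{n,h}(x) - bf(x)\big) - b(x)\big(\widehat f_{n,h'}(x) - f(x)\big)\right],
\]
whence $(\widehat b_{n,h,h'}(x) - b(x))^2 \leqslant 2\widehat f_{n,h'}(x)^{-2}\big[(\widehat{bf}_{n,h}(x) - bf(x))^2 + b(x)^2(\widehat f_{n,h'}(x) - f(x))^2\big]$. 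The only quantity obstructing a direct conclusion is the random factor $\widehat f_{n,h'}(x)^{-2}$, which has no deterministic upper bound since $K$ may change sign and $f$ is merely bounded below by $m_n$ on $\mathcal S_n$.

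To deal with this, for each $x \in \mathcal S_n$ I would split according to the event $\Omega_n(x) := \{|\widehat f_{n,h'}(x) - f(x)| \leqslant f(x)/2\}$. On $\Omega_n(x)$ one has $\widehat f_{n,h'}(x) \geqslant f(x)/2 \geqslant m_n/2$, hence $\widehat f_{n,h'}(x)^{-2} \leqslant 4/m_n^2$; combining this with $f(x) \leqslant \|f\|_{\infty} \leqslant \mathfrak c_f$ and with $b(x)^2 f(x) \leqslant \mathfrak c_{b,f}$ (Assumption \ref{assumption_b_f}), then integrating over $\mathcal S_n \subseteq \mathbb R$ and taking expectations, Proposition \ref{bound_MISE_numerator_2bNW} and the Parzen--Rosenblatt bound yield
\[
\mathbb E\!\left(\int_{\mathcal S_n}(\widehat b_{n,h,h'}(x) - b(x))^2 f(x)\mathbf 1_{\Omega_n(x)}\,dx\right)
\leqslant \frac{8\mathfrak c_f}{m_n^2}\left(\|bf - (bf)_h\|_{2}^{2} + \frac{\mathfrak c_{K,Y}}{nh}\right) + \frac{8\mathfrak c_{b,f}}{m_n^2}\left(\|f - f_{h'}\|_{2}^{2} + \frac{\mathfrak c_K}{nh'}\right),
\]
which already gives the first three terms of the announced bound (that $\mathfrak c_f \geqslant 1$ absorbs the missing factor in front of $\mathfrak c_{b,f}$). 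On $\Omega_n(x)^c$ one has $(\widehat f_{n,h'}(x) - f(x))^2 > f(x)^2/4 \geqslant m_n^2/4$, so $\mathbf 1_{\Omega_n(x)^c} \leqslant 4m_n^{-2}(\widehat f_{n,h'}(x) - f(x))^2$; using this together with $b(x)^2 f(x) \leqslant \mathfrak c_{b,f}$ and $f(x) \geqslant m_n$ on $\mathcal S_n$, the part of $(\widehat b_{n,h,h'}(x) - b(x))^2$ attached to $b$ is absorbed into the remaining $2\mathfrak c_{b,f}(\|f - f_{h'}\|_{2}^{2} + \mathfrak c_K/(nh'))$ budget.

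The step I expect to be the main obstacle is the control, on $\Omega_n(x)^c$, of the part of $(\widehat b_{n,h,h'}(x) - b(x))^2$ carried by $\widehat b_{n,h,h'}(x)$ itself: there $\widehat f_{n,h'}(x)$ may be arbitrarily close to $0$, so this contribution is not bounded deterministically, and one must exploit that $\Omega_n(x)^c$ is charged only when $\widehat f_{n,h'}(x)$ deviates from $f(x)$ by at least $m_n/2$ (quantitatively, via $\mathbf 1_{\Omega_n(x)^c} \leqslant 4m_n^{-2}(\widehat f_{n,h'}(x) - f(x))^2$) in order to keep it inside the allotted budget; this is where the precise form of the estimator and the moment assumption on $Y_1$ entering $\mathfrak c_{K,Y}$ intervene. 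Once this is settled, adding the $\Omega_n(x)$- and $\Omega_n(x)^c$-contributions and collecting constants with the help of $\mathfrak c_f \geqslant 1$ delivers the stated inequality.
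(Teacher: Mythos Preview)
Your decomposition and the treatment on the good event $\Omega_n(x)$ are correct and essentially match the paper's argument. You have also correctly identified the genuine obstruction on $\Omega_n(x)^c$: when $\widehat f_{n,h'}(x)$ is near zero, $\widehat b_{n,h,h'}(x)^2$ is not controllable, and the indicator bound $\mathbf 1_{\Omega_n(x)^c}\leqslant 4m_n^{-2}(\widehat f_{n,h'}(x)-f(x))^2$ cannot salvage this term --- the product $\widehat b_{n,h,h'}(x)^2(\widehat f_{n,h'}(x)-f(x))^2$ still contains $\widehat f_{n,h'}(x)^{-2}$. Your hope that ``the precise form of the estimator and the moment assumption on $Y_1$'' will close the gap is misplaced: for the raw quotient, $\mathbb E(\|\widehat b_{n,h,h'}-b\|_{2,f,\mathcal S_n}^2)$ need not even be finite.

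The paper resolves this not by a sharper probabilistic estimate but by implicitly replacing $\widehat b_{n,h,h'}$ with the truncated version $\widehat b_{n,h,h'}\mathbf 1_{\widehat f_{n,h'}>m_n/2}$. Its starting decomposition is
\[
\widehat b_{n,h,h'}-b=\Big(\frac{\widehat{bf}_{n,h}-bf}{\widehat f_{n,h'}}+bf\Big(\frac{1}{\widehat f_{n,h'}}-\frac{1}{f}\Big)\Big)\mathbf 1_{\widehat f_{n,h'}>m_n/2}-b\,\mathbf 1_{\widehat f_{n,h'}\leqslant m_n/2},
\]
whose right-hand side is exactly $\widehat b_{n,h,h'}\mathbf 1_{\widehat f_{n,h'}>m_n/2}-b$. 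On the bad event the error is then simply $|b(x)|$; using $b^2f\leqslant\mathfrak c_{b,f}$, the inclusion $\{\widehat f_{n,h'}\leqslant m_n/2\}\subset\{|\widehat f_{n,h'}-f|>m_n/2\}$ (valid on $\mathcal S_n$), and Markov's inequality,
\[
2\int_{\mathcal S_n}b^2f\,\mathbb P\big(|\widehat f_{n,h'}-f|>m_n/2\big)\,dx\leqslant\frac{8\mathfrak c_{b,f}}{m_n^2}\,\mathbb E(\|\widehat f_{n,h'}-f\|_2^2),
\]
which merges with the good-event contribution to give the factor $2\mathfrak c_{b,f}$ (and $\mathfrak c_f\geqslant 1$ absorbs the constants). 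So the fix for your gap is structural --- truncate the estimator --- not analytic.
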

\noindent
The idea behind Proposition \ref{bound_MISE_2bNW} is that we cannot pretend to accurately estimate $b$ on domains where few $X_i$'s are observed. Such domains correspond to small level of the density. For small $m_n$, the set ${\mathcal S}_n$ excludes these cases.
\\
Proposition \ref{bound_MISE_2bNW} gives a decomposition of the risk of the quotient estimator as the sum of the risks of the estimators of the numerator $bf$ and of the denominator $f$, up to the multiplicative constant $8\mathfrak c_f/m_{n}^{2}$. Therefore, the rate of the quotient estimator is, in the best case, the worst rate of the two estimators used to define it (see also Remark \ref{Nikol} below). The factor $1/m_{n}^{2}$ may imply a global loss with respect to this rate. Clearly, the smaller is $m_n$, the larger is the loss.
\\
For instance, if $f$ is lower bounded by a known constant $f_0$ on a given compact set $A$, then we can take ${\mathcal S}_n = A$ and $m_n=f_0$. In that case, no loss occurs. If $f_0$ is unknown, we still can bound the risk with ${\mathcal S}_n=A$ and $1/m_n^2= \log(n)$ for $n$ large enough. A log-loss occurs then in the rate.
%


%
\begin{remark}\label{Nikol} We consider, for  $\beta, L > 0$,  the Nikol'ski ball ${\mathcal H}(\beta, L)$, defined as the set of $\ell =\lfloor\beta\rfloor$ times continuously derivable functions $\varphi :\mathbb R\rightarrow\mathbb R$ such that $\varphi^{(\ell)}$ satisfies
\begin{displaymath}
\left[\int_{-\infty}^{\infty}
(\varphi^{(\ell)}(x + t) -\varphi^{(\ell)}(x))^2dx\right]^{1/2}
\leqslant L|t|^{\beta -\ell}
\textrm{ $;$ }
\forall t\in\mathbb R.
\end{displaymath}
For instance, for $p\in\mathbb N$, any function $\varphi\in C^{p + 1}(\mathbb R)$ such that ${\rm supp}(\varphi^{(p)}) = [0,1]$ and $\|\varphi^{(p + 1)}\|_{\infty}\leqslant L$ belongs to $\mathcal H(p + 1,L)$. Indeed, for every $t\in\mathbb R_+$,
\begin{eqnarray*}
 \int_{-\infty}^{\infty}
 (\varphi^{(p)}(x + t) -\varphi^{(p)}(x))^2dx
 & \leqslant &
 t\int_{-t}^{1}\int_{x}^{x + t}\varphi^{(p + 1)}(z)^2\mathbf 1_{[0,1]}(z)dzdx\\
 & \leqslant &
 L^2t\int_{-t}^{1}((x + t)\wedge 1 - x\vee 0)dx\\
 & = &
 L^2t\left(\int_{0}^{1 + t}(x\wedge 1)dx -\int_{0}^{1}xdx\right) = L^2t^2.
\end{eqnarray*}
More subtly, $\psi : x\mapsto e^{-x}\mathbf 1_{\mathbb R_+}(x)$ belongs to $\mathcal H(1/2,1)$. Indeed, for every $t\in\mathbb R_+$,
\begin{eqnarray*}
 \int_{-\infty}^{\infty}
 (\psi(x + t) -\psi(x))^2dx
 & = &
 \int_{-t}^{\infty}e^{-2(x + t)}dx -
 2\int_{0}^{\infty}e^{-t - 2x}dx +\int_{0}^{\infty}e^{-2x}dx\\
 & = &
 \lim_{x\rightarrow\infty}
 -\frac{1}{2}[e^{-2t}(e^{-2x} - e^{2t})
 - 2e^{-t}(e^{-2x} - 1)\\
 & &
 \hspace{3cm}
 + e^{-2x} - 1] = 1 - e^{-t}\leqslant t.
\end{eqnarray*}
Now, assume that $bf$ belongs to $\mathcal H(\beta_1,L)$ and $f$ to $\mathcal H(\beta_2,L)$. We also assume that the kernel $K$ satisfies Assumption \ref{assumption_K_f_numerator} and is of order $\ell =\lfloor\max(\beta_1,\beta_2)\rfloor$, that is
\begin{displaymath}
\int_{-\infty}^{\infty}
|u^kK(u)|du <\infty
\quad\textrm{and}\quad
\int_{-\infty}^{\infty}
u^kK(u)du = 0
\textrm{ $;$ }\forall k\in\{1,\dots,\ell\}.
\end{displaymath}
Then, it follows from Tsybakov \cite{TSYBAKOV09}, Chapter 1, that
\begin{displaymath}
\|bf - (bf)_h\|_{2}^{2}\leqslant C(\beta_1, L)h^{2\beta_1}
\quad\textrm{and}\quad
\|f - f_{h'}\|_{2}^{2}\leqslant C'(\beta_2, L)(h')^{2\beta_2}.
\end{displaymath}
This implies that choosing $h_{{\rm opt}} = c_1n^{1/(2\beta_1 + 1)}$ in Proposition \ref{bound_MISE_numerator_2bNW} yields
\begin{displaymath}
\mathbb E(\|\widehat{bf}_{n,h_{\rm opt}} - bf\|_{2}^{2})
\lesssim n^{-2\beta_1/(2\beta_1 + 1)},
\end{displaymath}
which is a standard optimal rate of estimation on Nikol'ski balls. The same rate holds for the estimation of $f$ under our assumptions, with $\beta_1$ replaced by $\beta_2$, and $h_{\rm opt}' = c_2n^{1/(2\beta_2 + 1)}$. This implies that 
\begin{eqnarray*}
 & &
 \|bf - (bf)_{h_{\rm opt}}\|_{2}^{2} +\frac{\mathfrak c_{K,Y}}{nh_{\rm opt}}
 + 2\mathfrak c_{b,f}\left(
 \|f - f_{h_{\rm opt}'}\|_{2}^{2} +\frac{\mathfrak c_K}{nh_{\rm opt}'}\right)\\
 & &
 \hspace{5cm}\lesssim 
 \max(n^{-2\beta_1/(2\beta_1 + 1)},n^{-2\beta_2/(2\beta_2 + 1)}).
\end{eqnarray*}
So, the rate is optimal if $\beta =\min(\beta_1,\beta_2)$ is the regularity of $b$.\\
However, such bandwidth choices are not possible in practice, as they depend on unknown regularity parameters. Data driven bandwidth selection methods are settled to automatically reach a squared bias-variance compromise, inducing the optimal rate if the function under estimation does belong to a regularity space.
\end{remark}
%


%
\section{A bandwidth selection procedure for the 2bNW estimator based on the GL method}\label{section_GL}
The bound on the MISE of $\widehat b_{n,h,h'}$ obtained in Proposition \ref{bound_MISE_2bNW} suggests to select $h$ and $h'$ separately, so that both bounds are minimal. The Goldenshluger-Lepski method (see Goldenshluger and Lepski \cite{GL11}) allows to do this for $\widehat{f}_{n,h'}$, but requires to be extended to the estimator of $bf$. In particular, extensions of the proof are required as we do not wish to assume that the $Y_i$'s are bounded. 
\\
\\
Consider the collection of bandwidths $\mathcal H_n :=\{h_1,\dots,h_{N(n)}\}\subset [0,1]$, where $N(n)\in\{1,\dots,n\}$ and
\begin{displaymath}
\frac{1}{n} < h_1 <\dots < h_{N(n)}.
\end{displaymath}
Moreover, we will need the following conditions.
%


%
\begin{assumption}\label{bandwidth_GL}
There exists $\mathfrak m > 0$, not depending on $n$, such that
\begin{displaymath}
\frac{1}{n}\sum_{i = 1}^{N(n)}\frac{1}{h_i}\leqslant\mathfrak m,
\end{displaymath}
and for every $c > 0$, there exists $\mathfrak m(c) > 0$, not depending on $n$, such that
\begin{displaymath}
\sum_{i = 1}^{N(n)}
\frac{1}{\sqrt{h_i}}
\exp\left(-\frac{c}{\sqrt{h_i}}\right)
\leqslant\mathfrak m(c).
\end{displaymath}
\end{assumption}
\noindent
{\bf Example.} Consider the dyadic bandwidths defined by
\begin{displaymath}
h_i := 2^{-i}
\textrm{ $;$ }
\forall i = 0,1,\dots,\left[\frac{\log(n)}{\log(2)}\right].
\end{displaymath}
Then,
\begin{displaymath}
\frac{1}{n}\sum_{i = 1}^{[\log(n)/\log(2)]}2^i
\leqslant\frac{2n - 1}n\leqslant 2
\end{displaymath}
and 
\begin{displaymath}
\sum_{i = 1}^{[\log(n)/\log(2)]}2^{i/2}\exp(-c2^{i/2})\leqslant
\sum_{i = 1}^{n}\sqrt i\exp(-c\sqrt i)\leqslant\mathfrak m(c) <\infty.
\end{displaymath}
Thus, Assumption \ref{bandwidth_GL} is fulfilled.
\\
\\
Consider also
\begin{eqnarray*}
 \widehat{bf}_{n,h,\eta}(x) & := &
 (K_{\eta}\ast\widehat{bf}_{n,h})(x)\\
 & = &
 \frac{1}{n}\sum_{i = 1}^{n}Y_i(K_{\eta}\ast K_h)(X_i - x).
\end{eqnarray*}
We apply the Goldenshluger-Lepski bandwidth selection method to $\widehat{bf}_{n,h}$ by solving the minimization problem
\begin{equation}\label{definition_GL}
\min_{h\in\mathcal H_n}\{A_n(h) + V_n(h)\}
\end{equation}
where
\begin{displaymath}
A_n(h) :=\sup_{\eta\in\mathcal H_n}
(\|\widehat{bf}_{n,h,\eta} -
\widehat{bf}_{n,\eta}\|_{2}^{2} - V_n(\eta))_+
\quad 
\textrm{ and }\quad 
V_n(h) :=
\upsilon
\frac{\mathfrak c_{K,Y}}{nh}\|K\|_{1}^{2},
\end{displaymath}
with $\upsilon > 0$ not depending on $n$ and $h$, and $\mathfrak c_{K,Y} =\|K\|_{2}^{2}\mathbb E(Y_{1}^{2})$. In the sequel, the solution to the minimization Problem (\ref{definition_GL}) is denoted by $\widehat h_n$.\\
The idea behind the criterion is that $A_n(h)$ is an estimate of the squared bias term $\|(bf)_h - bf\|_{2}^{2}$ and $V_n(h)$ an estimate of the variance. So, $\widehat h_n$ makes the compromise. See more details about the heuristics in Chagny \cite{CHAGNY16}, Section 4.4.
%


%
\begin{theorem}\label{bound_GL}
Under Assumptions \ref{assumption_K_f_numerator} and \ref{bandwidth_GL}, if $\mathbb E(Y_{1}^{6}) <\infty$, then there exist two deterministic constants $\mathfrak c,\overline{\mathfrak c} > 0$, not depending on $n$, such that
\begin{displaymath}
\mathbb E(\|\widehat{bf}_{n,\widehat h_n} - bf\|_{2}^{2})
\leqslant
\mathfrak c\cdot\inf_{h\in\mathcal H_n}\{\|(bf)_h - bf\|_{2}^{2} + V_n(h)\} +
\overline{\mathfrak c}\frac{\log(n)^2}{n}.
\end{displaymath}
\end{theorem}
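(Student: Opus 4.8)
The plan is to transpose the Goldenshluger--Lepski scheme (see Chagny \cite{CHAGNY16} for the heuristics) to a setting with unbounded responses, replacing the usual boundedness of the $Y_i$'s by a truncation argument fuelled by $\mathbb E(Y_1^6)<\infty$. \emph{Step 1 (deterministic skeleton).} By commutativity of the convolution, $K_h\ast\widehat{bf}_{n,\widehat h_n}=K_{\widehat h_n}\ast\widehat{bf}_{n,h}=\widehat{bf}_{n,h,\widehat h_n}$, so
\[
\widehat{bf}_{n,\widehat h_n}-bf
=\bigl(\widehat{bf}_{n,\widehat h_n}-\widehat{bf}_{n,h,\widehat h_n}\bigr)
+\bigl(\widehat{bf}_{n,h,\widehat h_n}-\widehat{bf}_{n,h}\bigr)
+\bigl(\widehat{bf}_{n,h}-bf\bigr).
\]
The first summand squared is $\leqslant A_n(h)+V_n(\widehat h_n)$ and the second squared is $\leqslant A_n(\widehat h_n)+V_n(h)$, in both cases straight from the definition of $A_n$ (and the same commutativity identity for the second). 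Combining $(a+b+c)^2\leqslant3(a^2+b^2+c^2)$, the minimality $A_n(\widehat h_n)+V_n(\widehat h_n)\leqslant A_n(h)+V_n(h)$ and Proposition~\ref{bound_MISE_numerator_2bNW}, one gets, for every $h\in\mathcal H_n$,
\[
\mathbb E\bigl(\|\widehat{bf}_{n,\widehat h_n}-bf\|_2^2\bigr)
\leqslant 6\,\mathbb E(A_n(h))+6V_n(h)+3\|(bf)_h-bf\|_2^2+3\tfrac{\mathfrak c_{K,Y}}{nh},
\]
and since $\mathfrak c_{K,Y}/(nh)\leqslant V_n(h)$ whenever $\upsilon\|K\|_1^2\geqslant1$, everything reduces to bounding $\mathbb E(A_n(h))$.

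\emph{Step 2 (splitting $A_n$).} Write $\widehat{bf}_{n,h,\eta}-\widehat{bf}_{n,\eta}=K_h\ast\widehat{bf}_{n,\eta}-\widehat{bf}_{n,\eta}$ and decompose $\widehat{bf}_{n,\eta}=(bf)_\eta+\nu_{n,\eta}$, where $(bf)_\eta:=K_\eta\ast(bf)$ and $\nu_{n,\eta}:=\widehat{bf}_{n,\eta}-(bf)_\eta$. Since $K_h\ast(bf)_\eta-(bf)_\eta=K_\eta\ast((bf)_h-bf)$, Young's inequality gives $\|K_h\ast(bf)_\eta-(bf)_\eta\|_2\leqslant\|K\|_1\|(bf)_h-bf\|_2$ and $\|K_h\ast\nu_{n,\eta}-\nu_{n,\eta}\|_2\leqslant(\|K\|_1+1)\|\nu_{n,\eta}\|_2$, whence
\[
A_n(h)\leqslant 2\|K\|_1^2\,\|(bf)_h-bf\|_2^2
+\sup_{\eta\in\mathcal H_n}\Bigl(2(\|K\|_1+1)^2\|\nu_{n,\eta}\|_2^2-V_n(\eta)\Bigr)_+ .
\]
The first term is of the order of the oracle bias; the point is to prove that the expectation of the supremum is $O(\log(n)^2/n)$.

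\emph{Step 3 (truncation and concentration).} Split $Y_i=Y_i^{(1)}+Y_i^{(2)}$ with $Y_i^{(1)}:=Y_i\mathbf1_{|Y_i|\leqslant\tau_n}$, $\tau_n$ of polynomial order in $n$, and correspondingly $\nu_{n,\eta}=\nu_{n,\eta}^{(1)}+\nu_{n,\eta}^{(2)}$. For the heavy part, $\mathbb E\|\nu_{n,\eta}^{(2)}\|_2^2\leqslant\|K\|_2^2\,\mathbb E(Y_1^2\mathbf1_{|Y_1|>\tau_n})/(n\eta)$, and summing over $\eta$ via the first inequality of Assumption~\ref{bandwidth_GL} and $\mathbb E(Y_1^2\mathbf1_{|Y_1|>\tau_n})\leqslant\tau_n^{-4}\mathbb E(Y_1^6)$ makes its contribution of order $1/n$. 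For the bounded part, $\|\nu_{n,\eta}^{(1)}\|_2$ is the supremum over the $\mathbb L^2$-unit ball of the empirical process $t\mapsto\frac1n\sum_i\{Y_i^{(1)}(K_\eta\ast t)(X_i)-\mathbb E(\cdots)\}$, whose envelope is $\lesssim\tau_n/\sqrt\eta$, whose expectation is $\leqslant(\mathfrak c_{K,Y}/(n\eta))^{1/2}$, and whose weak variance obeys $v_\eta\lesssim 1+\eta^{-1/2}$ — the latter from $\mathbb E(Y_1^2(K_\eta\ast t)(X_1)^2)=\sigma^2\!\int(K_\eta\ast t)^2f+\int b^2(K_\eta\ast t)^2f$, bounding the first integral by $\sigma^2\|f\|_\infty\|K\|_1^2$ and the second by $\|K_\eta\ast t\|_\infty\|K_\eta\ast t\|_2\|f\|_\infty^{1/2}\mathbb E(b(X_1)^4)^{1/2}$, $\mathbb E(b(X_1)^4)$ being finite since $\mathbb E(Y_1^6)<\infty$ forces $\mathbb E(|b(X_1)|^6)<\infty$ by conditional Jensen. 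Applying Talagrand's inequality for each $\eta$, choosing $\upsilon$ large enough that $V_n(\eta)$ dominates the expectation term (and a fixed multiple of $v_\eta/n$), and summing the resulting deviation bounds over the at most $n$ bandwidths — this is exactly where the second, exponential, inequality of Assumption~\ref{bandwidth_GL} controls the sum of the variance terms, while $\sum_i h_i^{-1}\leqslant\mathfrak m n$ handles the envelope terms — leaves a remainder of order $\log(n)^2/n$. Substituting back in Steps~1--2 and taking the infimum over $h\in\mathcal H_n$ yields the claim.

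\emph{Expected main difficulty.} The whole obstacle is quantitative: Talagrand's inequality must be run on the truncated process, and $\tau_n$ together with the deviation levels (which have to be of logarithmic size in order to union over a collection of cardinality possibly $\asymp n$) must be calibrated so that neither the truncation remainder nor the concentration fluctuation ever exceeds $\log(n)^2/n$ — this logarithmic loss, squared through the $\mathbb L^2$-norm, is what produces the $\log(n)^2/n$ term rather than $1/n$. A secondary subtlety is that, without assuming $b$ (or $b^2f$) bounded, the weak variance $v_\eta$ is only $O(\eta^{-1/2})$ instead of $O(1)$, and it is precisely the summability condition $\sum_i h_i^{-1/2}\exp(-c\,h_i^{-1/2})\leqslant\mathfrak m(c)$ of Assumption~\ref{bandwidth_GL} that was tailored to absorb this when one adds up the per-bandwidth deviation estimates.
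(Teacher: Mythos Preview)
Your proposal is correct and follows essentially the same route as the paper: the Goldenshluger--Lepski deterministic skeleton, a truncation of the $Y_i$'s, Talagrand's inequality on the truncated empirical process, a sixth-moment bound for the untruncated remainder, and summation over $\eta\in\mathcal H_n$ via the two conditions of Assumption~\ref{bandwidth_GL}. In particular, your observation that the weak variance is only $O(\eta^{-1/2})$ (forcing the use of the exponential summability condition) and your justification $\mathbb E(|b(X_1)|^6)\leqslant\mathbb E(Y_1^6)$ by conditional Jensen are exactly right; the paper obtains the same $\eta^{-1/2}$ via Cauchy--Schwarz and Young with the $\mathbb L^{4/3}$--norm of $K_\eta$, which is equivalent to your $\|K_\eta\ast t\|_4^2\leqslant\|K_\eta\ast t\|_\infty\|K_\eta\ast t\|_2$ bound.

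The one noteworthy difference is in Step~2. The paper decomposes $\widehat{bf}_{n,h,\eta}-\widehat{bf}_{n,\eta}$ into \emph{three} pieces, $(\widehat{bf}_{n,h,\eta}-(bf)_{h,\eta})+((bf)_{h,\eta}-(bf)_\eta)+((bf)_\eta-\widehat{bf}_{n,\eta})$, which yields \emph{two} stochastic suprema to control (one involving the single kernel $K_\eta$ and one involving the double kernel $K_\eta\ast K_h$). Your decomposition via $\widehat{bf}_{n,\eta}=(bf)_\eta+\nu_{n,\eta}$ and the bound $\|K_h\ast\nu_{n,\eta}-\nu_{n,\eta}\|_2\leqslant(\|K\|_1+1)\|\nu_{n,\eta}\|_2$ collapses everything onto a \emph{single} centered process $\nu_{n,\eta}$, which is a genuine economy: you avoid repeating the concentration argument for the double-kernel process. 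The price is a slightly larger constant in front of $\|\nu_{n,\eta}\|_2^2$, hence a slightly larger threshold on $\upsilon$, but this is immaterial. Both variants are standard in the GL literature; yours is the leaner one.

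A minor quibble on bookkeeping: your attribution of the $\log(n)^2$ loss to a ``union over $\asymp n$ bandwidths'' is not quite how it arises here. There is no union bound; one simply sums the per-$\eta$ expectation bounds, and the $\log(n)^2$ comes from the truncation--moment trade-off (the paper takes $\mathfrak m(n)\asymp n^{1/2}/\log(n)^{1/2}$, so the sixth-moment remainder $\mathfrak m(n)^{-4}\sum_\eta\|K\|_2^2/\eta\lesssim\log(n)^2/n$). With your choice $\tau_n=n^{1/4}$ one would in fact get $O(1/n)$ for the remainder, so the $\log(n)^2/n$ in the statement is not sharp --- but this only strengthens the conclusion.
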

\noindent
Theorem \ref{bound_GL} states that $\widehat{bf}_{n,\widehat h_n}$ automatically leads to a compromise between the squared bias ($\|(bf)_h - bf\|_{2}^{2}$) and the variance ($V_n(h)$) terms. The multiplicative constant $\mathfrak c$, which is larger than one, is the price of the method but preserves the rate. Lastly, the additive quantity $\overline{\mathfrak c}\log(n)^2/n$ is negligible with respect to the possible rate of convergence (see Remark \ref{Nikol}).
\\
We recall now a version of the result proved by Goldenshluger and Lepski \cite{GL11}, which is available for the estimator of $f$. See also a simplified proof in Comte \cite{COMTE17}, Section 4.2. Let us consider
\begin{displaymath}
\widehat h_n'\in
\arg\min_{h'\in\mathcal H_n}\{A_n'(h') + V_n'(h')\},
\end{displaymath}
where
\begin{displaymath}
A_n'(h') :=\sup_{\eta\in\mathcal H_n}
(\|K_{\eta}\ast\widehat f_{n,h'} -
\widehat f_{n,\eta}\|_{2}^{2} - V_n'(\eta))_+
\quad\textrm{and}\quad
V_n'(h') :=
\chi\frac{\|K\|_{2}^{2}\|K\|_{1}^{2}}{nh'}
\end{displaymath}
with $\chi > 0$ not depending on $n$ and $h'$. Under Assumptions \ref{assumption_K_f_numerator} and \ref{bandwidth_GL}, there exist two deterministic constants $\mathfrak c',\overline{\mathfrak c}' > 0$, not depending on $n$, such that
\begin{equation}\label{GL_density}
\mathbb E(\|\widehat f_{n,\widehat h_n'} - f\|_{2}^{2})
\leqslant
\mathfrak c'\cdot\inf_{h'\in\mathcal H_n}\{\|f_{h'} - f\|_{2}^{2} + V_n'(h')\} +
\frac{\overline{\mathfrak c}'}{n}
\end{equation}
\noindent
Gathering (\ref{GL_density}) and Theorem \ref{bound_GL} yields a Corollary similar to Proposition \ref{bound_MISE_2bNW}.
%


%
\begin{corollary}\label{bound_2bNW_GL}
Let $m_n$ be a positive real number and consider
\begin{displaymath}
\mathcal S_n :=\{x\in\mathbb R : f(x)\geqslant m_n\}.
\end{displaymath}
Under Assumptions \ref{assumption_K_f_numerator}, \ref{assumption_b_f} and \ref{bandwidth_GL}, if $\mathbb E(Y_{1}^{6}) <\infty$, then
\begin{eqnarray*}
 \mathbb E(\|\widehat b_{n,\widehat h_n,\widehat h_n'} - b\|_{2,f,\mathcal S_n}^{2})
 & \leqslant &
 \mathfrak C_n
 \inf_{(h,h')\in\mathcal H_{n}^{2}}\{\|(bf)_h - bf\|_{2}^{2} +\|f_{h'} - f\|_{2}^{2}\\
 & &
 \hspace{2cm}
 + V_n(h) + V_n'(h')\}
 +\overline{\mathfrak C}_n\frac{\log(n)^2}{n},
\end{eqnarray*}
where
\begin{displaymath}
\mathfrak C_n :=\frac{8\mathfrak c_f}{m_{n}^{2}}(\mathfrak c\vee(2\mathfrak c_{b,f}\mathfrak c'))
\quad\textrm{and}\quad
\overline{\mathfrak C}_n :=
\frac{8\mathfrak c_f}{m_{n}^{2}}(
\overline{\mathfrak c} + 2\mathfrak c_{b,f}\overline{\mathfrak c}').
\end{displaymath}
\end{corollary}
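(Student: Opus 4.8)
The plan is to reduce the corollary to the two adaptive bounds already in hand --- Theorem \ref{bound_GL} for the numerator estimator and display (\ref{GL_density}) for the denominator estimator --- by inserting them into the risk decomposition behind Proposition \ref{bound_MISE_2bNW}. Concretely, I would first revisit the proof of Proposition \ref{bound_MISE_2bNW} at the stage \emph{before} the fixed-bandwidth MISE bounds of Proposition \ref{bound_MISE_numerator_2bNW} (and its density analogue) are substituted. That argument starts from the identity
\begin{displaymath}
\widehat b_{n,h,h'}(x) - b(x)
=
\frac{f(x)\bigl(\widehat{bf}_{n,h}(x) - bf(x)\bigr) - bf(x)\bigl(\widehat f_{n,h'}(x) - f(x)\bigr)}{f(x)\,\widehat f_{n,h'}(x)},
\end{displaymath}
splits on the event $\{|\widehat f_{n,h'}(x) - f(x)| \leqslant f(x)/2\}$ --- on which $\widehat f_{n,h'}(x) \geqslant f(x)/2$, so that $(\widehat b_{n,h,h'}(x) - b(x))^2$ is pointwise bounded by $8 f(x)^{-2}(\widehat{bf}_{n,h}(x) - bf(x))^2 + 8 b(x)^2 f(x)^{-2}(\widehat f_{n,h'}(x) - f(x))^2$ --- and then integrates against $f$ over $\mathcal S_n$, using $f \geqslant m_n$ and $b^2 f \leqslant \mathfrak c_{b,f}$ (Assumption \ref{assumption_b_f}) to extract the prefactor $8\mathfrak c_f/m_n^2$, the complementary ``small denominator'' event being absorbed by a Bernstein-type deviation bound for $\widehat f_{n,h'}(x)$. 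The upshot is the intermediate inequality
\begin{displaymath}
\mathbb E\bigl(\|\widehat b_{n,h,h'} - b\|_{2,f,\mathcal S_n}^2\bigr)
\leqslant
\frac{8\mathfrak c_f}{m_n^2}\Bigl(
\mathbb E\bigl(\|\widehat{bf}_{n,h} - bf\|_2^2\bigr)
+ 2\mathfrak c_{b,f}\,\mathbb E\bigl(\|\widehat f_{n,h'} - f\|_2^2\bigr)\Bigr).
\end{displaymath}

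The next step is to check that this decomposition survives the substitution $h\mapsto\widehat h_n$, $h'\mapsto\widehat h_n'$. The good-event part is purely algebraic, hence valid for any $\mathcal H_n$-valued, data-dependent pair of bandwidths; the only delicate point is that the small-denominator event must now be controlled \emph{uniformly over} $\mathcal H_n$, a collection of cardinality $N(n)\leqslant n$. A union bound over $\mathcal H_n$, combined with the same deviation inequality (legitimate thanks to Assumption \ref{assumption_K_f_numerator} and $\mathbb E(Y_1^6)<\infty$), keeps this contribution of the same (lower) order as the remainder already present in Theorem \ref{bound_GL}, so that the intermediate inequality above holds with $h,h'$ replaced by $\widehat h_n,\widehat h_n'$, up to a term absorbed into the additive remainder $\overline{\mathfrak c}\log(n)^2/n$.

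It then only remains to plug in the adaptive bounds and reorganise constants. Setting $A(h) := \|(bf)_h - bf\|_2^2 + V_n(h)$ and $B(h') := \|f_{h'} - f\|_2^2 + V_n'(h')$, Theorem \ref{bound_GL} gives $\mathbb E(\|\widehat{bf}_{n,\widehat h_n} - bf\|_2^2) \leqslant \mathfrak c\inf_{h\in\mathcal H_n}A(h) + \overline{\mathfrak c}\log(n)^2/n$ and (\ref{GL_density}) gives $\mathbb E(\|\widehat f_{n,\widehat h_n'} - f\|_2^2) \leqslant \mathfrak c'\inf_{h'\in\mathcal H_n}B(h') + \overline{\mathfrak c}'/n$. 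Then
\begin{displaymath}
\mathfrak c\inf_{h\in\mathcal H_n}A(h) + 2\mathfrak c_{b,f}\mathfrak c'\inf_{h'\in\mathcal H_n}B(h')
\leqslant
(\mathfrak c\vee 2\mathfrak c_{b,f}\mathfrak c')\Bigl(\inf_{h\in\mathcal H_n}A(h) + \inf_{h'\in\mathcal H_n}B(h')\Bigr)
=
(\mathfrak c\vee 2\mathfrak c_{b,f}\mathfrak c')\inf_{(h,h')\in\mathcal H_n^2}\{A(h)+B(h')\},
\end{displaymath}
while $\overline{\mathfrak c}\log(n)^2/n + 2\mathfrak c_{b,f}\overline{\mathfrak c}'/n \leqslant (\overline{\mathfrak c}+2\mathfrak c_{b,f}\overline{\mathfrak c}')\log(n)^2/n$ since $\log(n)^2\geqslant 1$; multiplying through by $8\mathfrak c_f/m_n^2$ produces exactly $\mathfrak C_n$ and $\overline{\mathfrak C}_n$ as stated. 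I expect the only genuinely non-routine step to be the uniform-over-$\mathcal H_n$ control of the small-denominator event in the second paragraph --- everything else is the pointwise algebra already present in Proposition \ref{bound_MISE_2bNW} and bookkeeping on constants.
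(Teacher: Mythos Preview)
Your overall strategy --- reuse the pointwise decomposition behind Proposition \ref{bound_MISE_2bNW} with the random bandwidths, then insert Theorem \ref{bound_GL} and (\ref{GL_density}) --- is exactly the paper's, and the bookkeeping on constants in your last paragraph is correct.

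Where you diverge is in the treatment of the small-denominator event, which you flag as the ``genuinely non-routine step'' requiring a Bernstein-type deviation bound and a union bound over $\mathcal H_n$. The paper avoids this entirely. Working on $\mathcal S_n$, the bad event $\{\widehat f_{n,\widehat h_n'}(x)\leqslant m_n/2\}$ is contained in $\{|\widehat f_{n,\widehat h_n'}(x)-f(x)|>m_n/2\}$, and by Markov's inequality (applied directly to the random-bandwidth estimator, no uniformity needed)
\[
\mathbb P\bigl(|\widehat f_{n,\widehat h_n'}(x)-f(x)|>m_n/2\bigr)\leqslant\frac{4}{m_n^2}\,\mathbb E\bigl(|\widehat f_{n,\widehat h_n'}(x)-f(x)|^2\bigr).
\]
Multiplying by $2\mathfrak c_{b,f}$ and integrating over $\mathcal S_n$ produces another copy of $\frac{8\mathfrak c_{b,f}}{m_n^2}\mathbb E(\|\widehat f_{n,\widehat h_n'}-f\|_2^2)$, which merges with the term already present (this is where the factor $2\mathfrak c_{b,f}$ in $\mathfrak C_n$ and $\overline{\mathfrak C}_n$ comes from) and is controlled by (\ref{GL_density}). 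So the step you single out as delicate is in fact the most routine one: Markov handles it in one line, and no exponential inequality or union bound is needed.
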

\noindent
The comments following Proposition \ref{bound_MISE_2bNW} and in Remark \ref{Nikol} apply here.
%


%
\section{A bandwidths selection procedure for the 2bNW estimator based on the PCO method}\label{LMR}
The Goldenshluger-Lepski method is mathematically very nice and provides a rigorous risk bound for the adaptive estimator with random bandwidth. However, it has been acknowledged as being difficult to implement, due to the square grid in $h,\eta$ required to compute intermediate versions of the criterion and to the lack of intuition to guide the choice of the constants $\upsilon$ and $\chi$ which should be calibrated from preliminary simulation experiments, see e.g. Comte and Rebafka \cite{CR16}. This is the reason why Lacour {\it et al.} \cite{LMR17} investigated and proposed a simplified criterion (PCO) relying on deviation inequalities for $U$-statistics due to Houdr\'e and Reynaud-Bouret \cite{HRB03}. This inequality applies in our more complicated context and Lacour-Massart-Rivoirard's result can be extended here as follows.
\\
\\
Let us recall that $K_h(\cdot) = 1/hK(\cdot/h)$ and
\begin{displaymath}
(bf)_h =
\mathbb E(\widehat{bf}_{n,h}) =
K_h\ast (bf)
\end{displaymath}
(see Lemma \ref{preliminaries_GL}). Let $h_{\min}$ be the smallest bandwidth value in $\mathcal H_n$ and consider
\begin{displaymath}
\textrm{crit}(h) :=
\|\widehat{bf}_{n,h} -\widehat{bf}_{n,h_{\min}}\|_{2}^{2} +\textrm{pen}(h)
\end{displaymath}
with
\begin{displaymath}
\textrm{pen}(h) :=
\frac{2\langle K_{h_{\min}},K_h\rangle_2}{n^2}
\sum_{i = 1}^{n}Y_{i}^{2}.
\end{displaymath}
Then, let us define
\begin{displaymath}
\widetilde h_n\in\arg\min_{h\in\mathcal H_n}\textrm{crit}(h).
\end{displaymath}
The idea behind the proposal of Lacour {\it et al.}~(2017) is that, instead of comparing estimators $\widehat{bf}_{n,h}$ to a collection of estimators $\widehat{bf}_{n,h,\eta}$ for different bandwidths $\eta$, it is sufficient to compare them to the same single estimator, corresponding to the smallest bandwidth. See their Section 3.1 for more heuristic elements. This implies a faster and more efficient numerical procedure.\\
In the sequel, in addition to Assumption \ref{assumption_K_f_numerator}, the kernel $K$, the functions $b$ and $f$, the distribution of $Y_1$ and $h_{\min}$ fulfill the following assumption.
%


%
\begin{assumption}\label{assumption_K_LMR}
The kernel $K$ is symmetric and $K(0) > 0$,
\begin{displaymath}
\frac{1}{nh_{\min}}\leqslant 1,
\end{displaymath}
$bf$ is bounded, and there exists $\alpha > 0$ such that $\mathbb E(\exp(\alpha|Y_1|)) <\infty$.
\end{assumption}
\noindent
As for Assumption \ref{assumption_b_f}, we can note that assuming $bf$ bounded does not require $b$ to be bounded, since most densities decrease fast at infinity.  Moreover, the moment condition here is $\mathbb E(\exp(\alpha|Y_1|)) <\infty$ and is stronger than for the Goldenschluger and Lepski method ($\mathbb E(Y_{1}^{6}) <\infty$).
%


%
\begin{theorem}\label{bound_LMR}
Consider $\vartheta\in (0,1)$. Under Assumptions \ref{assumption_K_f_numerator} and \ref{assumption_K_LMR}, there exist two deterministic constants $\mathfrak a,\mathfrak b > 0$, not depending on $n$, $h_{\min}$ and $\vartheta$, such that
\begin{displaymath}
\mathbb E(\|\widehat{bf}_{n,\widetilde h_n} - bf\|_{2}^{2})
\leqslant
(1 +\vartheta)\inf_{h\in\mathcal H_n}\mathbb E(\|\widehat{bf}_{n,h} - bf\|_{2}^{2})
+\frac{\mathfrak a}{\vartheta}\|(bf)_{h_{\min}} - bf\|_{2}^{2} +
\frac{\mathfrak b}{\vartheta}\cdot\frac{\log(n)^5}{n}.
\end{displaymath}
\end{theorem}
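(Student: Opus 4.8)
The plan is to transpose the Penalised Comparison to Overfitting argument of Lacour, Massart and Rivoirard \cite{LMR17} to the present regression setting; the only genuinely new difficulty is that the ``kernel'' of the degenerate $U$-statistic arising below carries a factor $Y_iY_j$, hence is merely sub-exponential instead of bounded.

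\textbf{Step 1 (algebraic decomposition).} Write $g:=bf$, $\widehat g_h:=\widehat{bf}_{n,h}$, $g_h:=(bf)_h=\mathbb E(\widehat g_h)$, $Z_h:=\widehat g_h-g_h$ and $\ell(h):=\|\widehat g_h-g\|_2^2$. Expanding $\|\widehat g_h-\widehat g_{h_{\min}}\|_2^2=\|(\widehat g_h-g)-(\widehat g_{h_{\min}}-g)\|_2^2$ and substituting $\widehat g_h-g=Z_h+(g_h-g)$ gives
\[
\textrm{crit}(h)=\ell(h_{\min})+\ell(h)+\Psi(h),\qquad \Psi(h):=-2\langle Z_h,Z_{h_{\min}}\rangle_2+\textrm{pen}(h)+\Theta(h),
\]
where $\Theta(h):=-2\langle Z_h,g_{h_{\min}}-g\rangle_2-2\langle g_h-g,Z_{h_{\min}}\rangle_2-2\langle g_h-g,g_{h_{\min}}-g\rangle_2$ collects the bias$\,\times\,$stochastic and bias$\,\times\,$bias cross-terms. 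The crucial feature of the penalty is that, since $\langle Y_iK_h(X_i-\cdot),Y_iK_{h_{\min}}(X_i-\cdot)\rangle_2=Y_i^2\langle K_h,K_{h_{\min}}\rangle_2$, it equals exactly twice the ``diagonal'' part of the double sum defining $\langle Z_h,Z_{h_{\min}}\rangle_2$, up to the recentrings $\mathbb E(Y_1K_h(X_1-\cdot))=g_h$. Consequently
\[
-2\langle Z_h,Z_{h_{\min}}\rangle_2+\textrm{pen}(h)=-2\,U_n(h)+\tfrac{2}{n}\langle g_h,g_{h_{\min}}\rangle_2+R_n(h),
\]
where $U_n(h):=\tfrac{1}{n^2}\sum_{i\neq j}\langle Y_iK_h(X_i-\cdot)-g_h,\,Y_jK_{h_{\min}}(X_j-\cdot)-g_{h_{\min}}\rangle_2$ is a \emph{degenerate} $U$-statistic of order $2$ and $R_n(h)$ is a centered sum of i.i.d.\ variables. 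The deterministic leftover $\tfrac{2}{n}\langle g_h,g_{h_{\min}}\rangle_2$ is $O(1/n)$, since $g=bf\in\mathbb L^2$ (note $\int(bf)^2\leqslant\|f\|_\infty\mathbb E(Y_1^2)<\infty$ because $\mathbb E(Y_1^2)<\infty$ and $f$ is bounded) and $\|g_h\|_2\leqslant\|K\|_1\|g\|_2$.

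\textbf{Step 2 (probabilistic control and conclusion).} Fix a deterministic $h\in\mathcal H_n$. The minimality $\textrm{crit}(\widetilde h_n)\leqslant\textrm{crit}(h)$ together with Step~1 yields, after cancelling $\ell(h_{\min})$,
\[
\ell(\widetilde h_n)\leqslant\ell(h)+\Psi(h)-\Psi(\widetilde h_n).
\]
It remains to bound $\Psi(h)-\Psi(\widetilde h_n)$, uniformly over the grid $\mathcal H_n$, by $\vartheta\,\ell(\widetilde h_n)$ plus (after taking expectations) $\vartheta\,\mathbb E(\ell(h))$, plus the bias budget $\tfrac{\mathfrak a}{\vartheta}\|(bf)_{h_{\min}}-bf\|_2^2$, plus the remainder $\tfrac{\mathfrak b}{\vartheta}\tfrac{\log(n)^5}{n}$, plus terms of zero mean. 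This is done by controlling, uniformly in $h\in\mathcal H_n$: (i) the degenerate $U$-statistic $U_n(h)$ via the Houdr\'e--Reynaud-Bouret deviation inequality \cite{HRB03} --- since its kernel is only sub-exponential, one first truncates the $Y_i$'s at a level of order $\log n$, the truncation defect being controlled by $\mathbb E(\exp(\alpha|Y_1|))<\infty$, and then estimates the four parameters of that inequality in terms of $h$ and $h_{\min}$ through $\|K_h\|_2^2=\|K\|_2^2/h$, $|\langle K_h,K_{h_{\min}}\rangle_2|\leqslant\|K\|_2^2/\sqrt{hh_{\min}}$, $K$ bounded and $bf$ bounded; (ii) the linear terms $\langle Z_h,g_{h_{\min}}-g\rangle_2$, $\langle g_h-g,Z_{h_{\min}}\rangle_2$ and the centered sum $R_n(h)$ via Bernstein's inequality, their variances being $O(\|g_h-g\|_2^2/n)$ and $O(1/n)$ respectively; (iii) the bias cross-term by Young's inequality, $2|\langle g_h-g,g_{h_{\min}}-g\rangle_2|\leqslant\vartheta\|g_h-g\|_2^2+\vartheta^{-1}\|g_{h_{\min}}-g\|_2^2$ with $\|g_h-g\|_2^2\leqslant\mathbb E(\ell(h))$, which produces precisely the $\tfrac{\mathfrak a}{\vartheta}\|(bf)_{h_{\min}}-bf\|_2^2$ term. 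Summing the deviation bounds of (i)--(ii) over the $N(n)\leqslant n$ points of $\mathcal H_n$ is what forces a remainder of order $\log(n)^5/n$: the extra powers of logarithm with respect to the $\log(n)^2$ of Theorem~\ref{bound_GL} come from the truncation level combined with the grid union bound needed to replace the now-unbounded integrand by an effectively bounded one. Moving $\vartheta\,\ell(\widetilde h_n)$ back to the left, taking expectations, rescaling $\vartheta$, and finally taking $\inf_{h\in\mathcal H_n}$ of the right-hand side gives the claim.

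\textbf{Main obstacle.} The delicate part is item~(i): in \cite{LMR17} the analogous $U$-statistic has a bounded kernel, whereas here it is proportional to $Y_iY_j\int K_h(X_i-t)K_{h_{\min}}(X_j-t)\,dt$ minus its recentrings and is only sub-exponential, so the Houdr\'e--Reynaud-Bouret parameters must be recomputed after truncating the $Y_i$'s, and the truncation level has to be propagated carefully through the four constants and through the grid union bound. Getting a remainder of order $\mathrm{polylog}(n)/n$ rather than, say, $n^{-1/2}$ is the crux, and this is where the hypotheses $\mathbb E(\exp(\alpha|Y_1|))<\infty$, $bf$ bounded and $1/(nh_{\min})\leqslant 1$ are all used. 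A secondary subtlety, common to all PCO-type proofs, is the bookkeeping of the cross-terms evaluated at the \emph{random} bandwidth $\widetilde h_n$, which is handled by systematically splitting them, via Young's inequality, into an $h_{\min}$-bias part (feeding the $\mathfrak a$-term) and a $\vartheta\,\ell(\widetilde h_n)$ part absorbed on the left-hand side.
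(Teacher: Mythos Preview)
Your sketch follows the same route as the paper (same algebraic decomposition, same tools: truncation at $\mathfrak m(n)\asymp\log n$, Houdr\'e--Reynaud-Bouret for the degenerate $U$-statistic, Bernstein for the linear cross-terms, Young for the bias--bias product), and you correctly single out the sub-exponential $U$-statistic as the main novelty.

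There is, however, one genuine missing ingredient. Your items (i)--(iii) deliver, at the random bandwidth, terms of the form
\[
\vartheta\Bigl(\|(bf)_{\widetilde h_n}-bf\|_2^2+\tfrac{\mathfrak c_{K,Y}}{n\widetilde h_n}\Bigr)
\]
(the bias part from (ii)--(iii), the variance part from (i)), \emph{not} $\vartheta\,\ell(\widetilde h_n)=\vartheta\|\widehat{bf}_{n,\widetilde h_n}-bf\|_2^2$. For a deterministic $h$ these coincide in expectation only up to an inequality in the wrong direction, and for the random $\widetilde h_n$ they do not coincide at all. Young's inequality does not close this gap: you need a \emph{uniform} comparison
\[
\|(bf)_h-bf\|_2^2+\frac{\mathfrak c_{K,Y}}{nh}\;\leqslant\;\frac{1}{1-\vartheta}\,\|\widehat{bf}_{n,h}-bf\|_2^2+O_{\mathbb P}\!\Bigl(\frac{\log(n)^5}{\vartheta n}\Bigr)
\quad\text{for all }h\in\mathcal H_n,
\]
which is exactly the paper's Lemma~\ref{Lerasle_type_inequality}. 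Its proof requires writing $\|Z_h\|_2^2-\mathfrak c_{K,Y}/(nh)$ as $n^{-2}U_n(h,h)+n^{-1}W_n(h)-n^{-1}\|(bf)_h\|_2^2$ and controlling the centered diagonal sum $W_n(h)$ uniformly via another truncated Bernstein bound; this is where $1/(nh_{\min})\leqslant 1$ is really used. Once that lemma is in hand, your final absorption ``moving $\vartheta\,\ell(\widetilde h_n)$ to the left'' becomes legitimate and the rest of your argument goes through as written.
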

\noindent
Theorem \ref{bound_LMR} states that the estimator $\widehat{bf}_{n,\widetilde h_n}$ has performance  of order of the best estimator of the collection $\inf_{h\in\mathcal H_n}\mathbb E(\|\widehat{bf}_{n,h} - bf\|_{2}^{2})$ up to a factor $(1 +\vartheta)$. Indeed, the two other terms can be considered as negligible. If $bf$ is in the Nikol'ski ball ${\mathcal H}(\beta_1,L)$ as in Remark \ref{Nikol}, then the first right-hand-side term is of order $n^{-2\beta_1/(2\beta_1 + 1)}$. Since for $h_{\min} = 1/n$, $\|(bf)_{h_{\min}} - bf\|_{2}^{2}$ is of order $n^{-2\beta_1}$, both this term and the last residual term $\log(n)^5/n$ are negligible compared to the first one.
\\
\\
Now, we state the result that can be deduced from Lacour {\it et al.} \cite{LMR17} for the estimator of $f$. Let us consider
\begin{displaymath}
\widetilde h_n'\in
\arg\min_{h'\in\mathcal H_n}\textrm{crit}'(h'),
\end{displaymath}
where
\begin{displaymath}
\textrm{crit}'(h') :=
\|\widehat f_{n,h'} -\widehat f_{n,h_{\min}}\|_{2}^{2} +\textrm{pen}'(h')
\quad\textrm{and}\quad
\textrm{pen}'(h') :=
\frac{2\langle K_{h_{\min}},K_h\rangle_2}{n}.
\end{displaymath}
By Lacour {\it et al.} \cite{LMR17}, Theorem 2, there exists two deterministic constants $\mathfrak a',\mathfrak b' > 0$, not depending on $n$ and $h_{\min}$, such that for every $\vartheta\in (0,1)$,
\begin{eqnarray*}
 \mathbb E(\|\widehat f_{n,\widetilde h_n'} - f\|_{2}^{2})
 & \leqslant &
 (1 +\vartheta)\inf_{h'\in\mathcal H_n}\mathbb E(\|\widehat f_{n,h'} - f\|_{2}^{2}) +
 \frac{\mathfrak a'}{\vartheta}\|f_{h_{\min}} - f\|_{2}^{2} +
 \frac{\mathfrak b'}{\vartheta n}.
\end{eqnarray*}
\noindent
Again, we can gather this last result and Theorem \ref{bound_LMR} to get the following Corollary.
%


%
\begin{corollary}\label{bound_2bNW_LMR}
Let $m_n$ be a positive real number and consider
\begin{displaymath}
\mathcal S_n :=\{x\in\mathbb R : f(x)\geqslant m_n\}.
\end{displaymath}
Consider also $\vartheta\in (0,1)$. Under Assumptions \ref{assumption_K_f_numerator}, \ref{assumption_b_f} and \ref{assumption_K_LMR},
\begin{eqnarray*}
 \mathbb E(\|\widehat{bf}_{n,\widetilde h_n,\widetilde h_n'} - bf\|_{2,f,\mathcal S_n}^{2})
 & \leqslant &
 (1 +\vartheta)\mathfrak C_n(1,1)
 \inf_{(h,h')\in\mathcal H_{n}^{2}}\{\mathbb E(\|\widehat{bf}_{n,h} - bf\|_{2}^{2})\\
 & &
 \hspace{4cm}
 +\mathbb E(\|\widehat f_{n,h'} - f\|_{2}^{2})\}\\
 & &
 +\frac{\mathfrak C_n(\mathfrak a,\mathfrak a')}{\vartheta}(\|(bf)_{h_{\min}} - bf\|_{2}^{2}
 +\|f_{h_{\min}} - f\|_{2}^{2})\\
 & &
 +\frac{\mathfrak C_n(\mathfrak b,\mathfrak b')}{\vartheta}\cdot\frac{\log(n)^5}{n},
\end{eqnarray*}
where
\begin{displaymath}
\mathfrak C_n(u,v) :=\frac{8\mathfrak c_f}{m_{n}^{2}}(u\vee(2\mathfrak c_{b,f}v))
\textrm{ $;$ }
\forall u,v\in\mathbb R.
\end{displaymath}
\end{corollary}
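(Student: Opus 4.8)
The plan is to mimic the proof of Proposition \ref{bound_MISE_2bNW}, but replacing the fixed-bandwidth bounds of Proposition \ref{bound_MISE_numerator_2bNW} by the adaptive bounds of Theorem \ref{bound_LMR} and the cited PCO density bound. First I would recall the algebraic decomposition used in Proposition \ref{bound_MISE_2bNW}: on $\mathcal S_n$ one writes
\begin{displaymath}
\widehat b_{n,\widetilde h_n,\widetilde h_n'} - b
= \frac{\widehat{bf}_{n,\widetilde h_n} - bf}{\widehat f_{n,\widetilde h_n'}}
+ b\cdot\frac{f - \widehat f_{n,\widetilde h_n'}}{\widehat f_{n,\widetilde h_n'}},
\end{displaymath}
and then controls $1/\widehat f_{n,\widetilde h_n'}$ on the event where $\widehat f_{n,\widetilde h_n'}\geqslant m_n/2$, say, splitting the expectation according to whether this event holds. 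This is exactly the mechanism that produces the factor $8\mathfrak c_f/m_n^2$ and the appearance of $\mathfrak c_{b,f}$ (bounding $b^2 f$ via Assumption \ref{assumption_b_f}) in Proposition \ref{bound_MISE_2bNW}. Since nothing in that deterministic/measure-theoretic argument depended on $h,h'$ being fixed rather than data-driven, it transfers verbatim: one obtains
\begin{displaymath}
\mathbb E(\|\widehat{bf}_{n,\widetilde h_n,\widetilde h_n'} - bf\|_{2,f,\mathcal S_n}^{2})
\leqslant \frac{8\mathfrak c_f}{m_n^2}\Bigl(
\mathbb E(\|\widehat{bf}_{n,\widetilde h_n} - bf\|_2^2)
+ 2\mathfrak c_{b,f}\,\mathbb E(\|\widehat f_{n,\widetilde h_n'} - f\|_2^2)\Bigr).
\end{displaymath}
Here I should be slightly careful about independence: unlike the fixed-bandwidth case, $\widetilde h_n$ and $\widetilde h_n'$ are random, so the control of the "bad" event $\{\widehat f_{n,\widetilde h_n'} < m_n/2\}$ must be done through a uniform-over-$\mathcal H_n$ bound (union bound over the at most $N(n)\leqslant n$ bandwidths, each handled by a Bernstein/concentration inequality), which is already implicit in how the PCO density result is stated and used; I would simply invoke that the proof of Proposition \ref{bound_MISE_2bNW} already accommodates this, or re-run the union-bound step.

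Next I would plug in Theorem \ref{bound_LMR} for the first expectation and the cited PCO bound for $f$ for the second, with the \emph{same} $\vartheta\in(0,1)$ in both. Theorem \ref{bound_LMR} gives
\begin{displaymath}
\mathbb E(\|\widehat{bf}_{n,\widetilde h_n} - bf\|_2^2)
\leqslant (1+\vartheta)\inf_{h\in\mathcal H_n}\mathbb E(\|\widehat{bf}_{n,h} - bf\|_2^2)
+ \frac{\mathfrak a}{\vartheta}\|(bf)_{h_{\min}} - bf\|_2^2
+ \frac{\mathfrak b}{\vartheta}\cdot\frac{\log(n)^5}{n},
\end{displaymath}
and the analogous inequality holds for $f$ with $\mathfrak a',\mathfrak b',f_{h_{\min}}$ and residual $\mathfrak b'/(\vartheta n)$. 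Multiplying the $bf$-bound by $1$ and the $f$-bound by $2\mathfrak c_{b,f}$ inside the bracket, I would then collect terms: the leading terms combine into $(1+\vartheta)$ times $\inf_{(h,h')}\{\mathbb E(\|\widehat{bf}_{n,h}-bf\|_2^2)+\mathbb E(\|\widehat f_{n,h'}-f\|_2^2)\}$ after using $\inf(A)+\inf(B)=\inf_{h,h'}(A(h)+B(h'))$ and bounding $1\vee(2\mathfrak c_{b,f})$ appropriately — this is precisely what the notation $\mathfrak C_n(1,1)=\tfrac{8\mathfrak c_f}{m_n^2}(1\vee 2\mathfrak c_{b,f})$ encodes. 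The bias-at-$h_{\min}$ terms combine with coefficient $\mathfrak C_n(\mathfrak a,\mathfrak a')/\vartheta$, and the $\log(n)^5/n$ and $1/n$ residuals combine (absorbing $1/n\leqslant \log(n)^5/n$) into $\mathfrak C_n(\mathfrak b,\mathfrak b')/\vartheta\cdot\log(n)^5/n$. This yields exactly the stated inequality.

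The only genuinely delicate point is the one flagged above: in the quotient estimator the denominator is evaluated at the \emph{data-driven} bandwidth $\widetilde h_n'$, so the event $\{\inf_{x\in\mathcal S_n}\widehat f_{n,\widetilde h_n'}(x)\text{ is small}\}$ must be controlled uniformly over all $h'\in\mathcal H_n$, not just for one fixed $h'$. I expect this to be the main obstacle, and the way around it is a union bound over the $N(n)\leqslant n$ bandwidths combined with a pointwise exponential deviation inequality for $\widehat f_{n,h'}(x)-f_{h'}(x)$ (the $X_i$'s contribution is bounded since $\|K\|_\infty<\infty$ and $h'\leqslant 1$, so Bernstein applies) and the bias bound $\|f_{h'}-f\|_\infty$ being small on $\mathcal S_n$ for the relevant range of $h'$; since there is no boundedness requirement on the $Y_i$'s for the \emph{denominator}, this causes no trouble, and the extra $\log(n)$ cost from the union bound is already absorbed into the $\log(n)^5/n$ residual. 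Everything else is the routine bookkeeping of assembling two known bounds through the elementary inequality $(a+b)^2\leqslant 2a^2+2b^2$ and the definition of $\mathfrak C_n(u,v)$, just as in Corollary \ref{bound_2bNW_GL}.
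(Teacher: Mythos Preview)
Your proposal is correct and follows essentially the same route as the paper, which simply says the proof is identical to that of Corollary \ref{bound_2bNW_GL}: apply the pathwise decomposition from the proof of Proposition \ref{bound_MISE_2bNW}, take expectations, and plug in Theorem \ref{bound_LMR} and the PCO density bound.

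The one place where you overcomplicate matters is the ``delicate point'' you flag at the end. No union bound over $\mathcal H_n$ or exponential concentration is needed for the event $\{|\widehat f_{n,\widetilde h_n'}(x)-f(x)|>m_n/2\}$: Markov's inequality gives
\[
\mathbb P\bigl(|\widehat f_{n,\widetilde h_n'}(x)-f(x)|>m_n/2\bigr)\leqslant\frac{4}{m_n^2}\,\mathbb E\bigl(|\widehat f_{n,\widetilde h_n'}(x)-f(x)|^2\bigr)
\]
regardless of whether $\widetilde h_n'$ is deterministic or data-driven, and integrating over $x\in\mathcal S_n$ (using $b^2f\leqslant\mathfrak c_{b,f}$) returns a constant multiple of $m_n^{-2}\,\mathbb E(\|\widehat f_{n,\widetilde h_n'}-f\|_2^2)$, which is precisely what the PCO bound for $f$ controls. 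This is exactly how the paper handles the analogous step in the proofs of Proposition \ref{bound_MISE_2bNW} and Corollary \ref{bound_2bNW_GL}.
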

\noindent
The proof of Corollary \ref{bound_2bNW_LMR} relies to the same arguments as the proof of Corollary \ref{bound_2bNW_GL} provided in Section \ref{bound_2bNW_GL}, and is therefore omitted.
%


%
\section{Simulation study}\label{section_simu}
For the noise, we consider $\varepsilon\sim\sigma\mathcal N(0,1)$, with $\sigma = 0.1$ and $\sigma = 0.7$. For the signal, we take either $X\sim\mathcal N(0,1)$ or $X\sim\gamma(3,2)/5$ (where the factor $5$ is set to keep the variance of $X$ of order $1$, as in the first case). For the function $b$, we took functions with different features and regularities: 
\begin{itemize}
 \item $b_1(x) =\exp(-x^2/2)$, 
 \item $b_2(x) = x^2/4-1$,
 \item $b_3(x) =\sin(\pi x)$,
 \item $b_4(x) =\exp(-|x|)$.
\end{itemize}
We illustrate in Figures \ref{fig1} and \ref{fig2} the difference between a sample generated with $\sigma = 0.1$ (small noise) and with $\sigma = 0.7$ (large noise), compared with the functions to estimate. We can see that the first case is easy and that the second one is very difficult. Notice that the vertical scales are different.
\begin{figure}[h!]
\includegraphics[width=12cm,height=8cm]{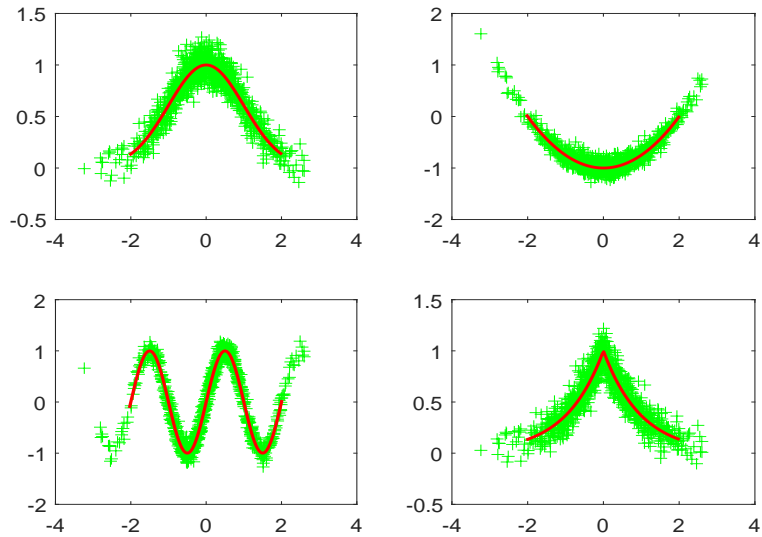}
\caption{Observations $(X_i,Y_i)_{1\leqslant i\leqslant n}$ for $n = 1000$ in the four cases of functions $b_1$ to $b_4$, with small noise $\sigma = 0.1$, and true regression function in bold red.}
\label{fig1}
\end{figure}
\begin{figure}[h!]
\includegraphics[width=12cm,height=8cm]{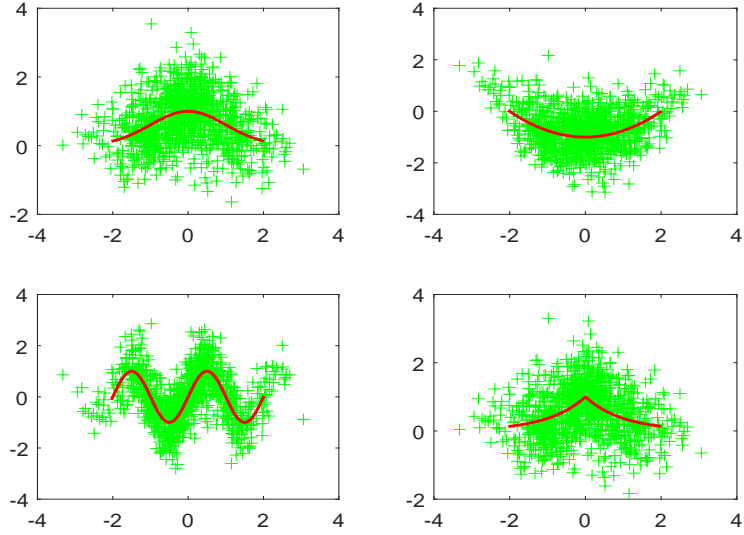}
\caption{Observations $(X_i,Y_i)_{1\leqslant i\leqslant n}$ for $n = 1000$ in the four cases of functions $b_1$ to $b_4$, with large noise $\sigma = 0.7$, and true regression function in bold red.}
\label{fig2}
\end{figure}
%


%
\subsection{Estimation of $bf$}
The PCO method is implemented for $f$ and $bf$ with a kernel of order $7$ (i.e. $\int x^kK(x)dx = 0$ for $k = 1$ to $7$), defined by $K(x) = 4n_1(x) - 6n_2(x) + 4n_3(x) - n_4(x)$, where $n_j(.)$ is a Gaussian density with mean $0$ and variance $j$. Note that, for $n_{j,h}(x) := 1/hn_j(x/h)$, it holds that
\begin{equation}\label{scalar}
\langle n_{i,h_1},n_{j,h_2}\rangle_2 =
\int_{-\infty}^{\infty}n_{i,h_1}(x)n_{j,h_2}(x)dx =
\frac{1}{\sqrt{2\pi}}\times\frac{1}{\sqrt{ih_{1}^{2} + jh_{2}^{2}}}
\end{equation}
The bandwidth is selected among $M = 75$ equispaced values in between $0.01$ and $1$. All functions (true or estimated) are computed at 100 equispaced points in the interquantile interval corresponding to the $2\%$ and $98\%$ quantiles of $X$. The bandwidth is selected via the PCO criterion, where $h_{\rm min} = 0.01$, and
\begin{displaymath}
\textrm{crit}(h) :=
\|\widehat{bf}_{n,h} -\widehat{bf}_{n,h_{\min}}\|_{2}^{2} +\textrm{pen}(h)
\quad\textrm{with}\quad
\textrm{pen}(h) :=
\frac{2\langle K_{h_{\min}},K_h\rangle_2}{n^2}
\sum_{i = 1}^{n}Y_{i}^{2}.
\end{displaymath}
Note that the bandwidth of the density estimator is selected as in Comte and Marie \cite{CM20}, by minimizing
\begin{displaymath}
\textrm{crit}'(h) :=
\|\widehat{f}_{n,h} -\widehat{f}_{n,h_{\min}}\|_{2}^{2} + 2\textrm{pen}'(h)
\quad\textrm{with}\quad
\textrm{pen}'(h) :=
\frac{2\langle K_{h_{\min}},K_h\rangle_2}{n}.
\end{displaymath}
The $\mathbb L^2$-norm is computed as a Riemann sum on the interquantile interval, while the penalty is explicit and exact, thanks to Formula (\ref{scalar}).\\
The cross-validation (CV) criterion for selecting the bandwidth of $\widehat{bf}_{n,h}$ is computed as follows:
\begin{displaymath}
CV(h) :=
\int\widehat{bf}_h(x)^2dx -\frac{2}{n(n - 1)h}
\sum_{i = 1}^{n}
\sum_{j = 1,j\neq i}^{n}Y_iY_jN\left(\frac{X_i - X_j}h\right),
\end{displaymath}
where $N(.)$ is the Gaussian kernel, also used to compute the estimator $\widehat{bf}_{n,h}$ in this case. It provides an estimation of $\|\widehat{bf}_h\|_{2}^{2} - 2\langle\widehat{bf}_h,bf\rangle_2$ relying on the idea that the empirical for $\langle t,bf\rangle_2$ is $1/n\sum_{i = 1}^{n}Y_it(X_i)$. The chosen bandwidth is the minimizer of $CV(h)$ in the same collection as previously.
\begin{table}[h!]
\hspace{-0cm}\begin{tabular}{c|ccc|ccc}
 & \multicolumn{3}{c}{$b_1f$} & \multicolumn{3}{c}{$b_2f$}\\
 $n$ & PCO & CV & Or & PCO & CV & Or\\ \hline
 $250$ & 0.33 & 0.37 & 0.16 & 0.32 & 0.38 & 0.15\\
 & {\small (0.28)} & {\small (0.82)} & {\small (0.14)} & {\small (0.28)} & {\small (1.01)}   & {\small (0.14)}\\
 $500$ & 0.17 & 0.29 & 0.08 & 0.17 & 0.26 & 0.07\\
 & {\small (0.13)} & {\small (0.86)} & {\small (0.07)} & {\small (0.14)} & {\small (1.10)} & {\small (0.07)}\\
 $1000$ & 0.09 & 0.21 & 0.05 & 0.09 & 0.21 & 0.04\\
 & {\small (0.07)} & {\small (0.57)} & {\small (0.04)} & {\small (0.07)} & {\small (0.60)} & {\small (0.03)}\\ \multicolumn{6}{c}{}\\
\end{tabular}
\begin{tabular}{c|ccc|ccc}
 & \multicolumn{3}{c}{$b_3f$} & \multicolumn{3}{c}{$b_4f$}\\
 $n$ & PCO & CV & Or & PCO & CV & Or\\ \hline
 $250$ & 0.45 & 0.42 & 0.31 & 0.35 & 0.40 & 0.17\\
 & {\small (0.25)} & {\small (0.32)} & {\small (0.17)} & {\small (0.24)} & {\small (0.81)}  & {\small (0.17)}\\
 $500$ & 0.23 & 0.35 & 0.15 & 0.20 & 0.39 & 0.11\\
 & {\small (0.14)} & {\small (0.88)} & {\small (0.08)} & {\small (0.13)} & {\small (0.84)} & {\small (0.06)}\\
 $1000$ & 0.12 & 0.24 & 0.09 & 0.11 & 0.19 & 0.07\\
 & {\small (0.07)} & {\small (0.55)} & {\small (0.05)} & {\small (0.07)} & {\small (0.40)} & {\small (0.04)}\\ \multicolumn{6}{c}{}\\
\end{tabular}
\caption{100*MISE (with 100*std in parenthesis below) for the estimation of $bf$ corresponding to the four examples $b_1,\dots, b_4$, 200 repetitions, $X\sim {\mathcal N}(0,1)$ and $\sigma = 0.1$. Columns PCO and CV correspond to the two competing methods. "Or" is for "oracle" and gives the average error of the best possible estimator of the collection, computed for each sample.}\label{tab_bf_XGauss}
\end{table}
\begin{table}[h!]
\hspace{-0cm}\begin{tabular}{c|ccc|ccc}
 & \multicolumn{3}{c}{$b_1f$} & \multicolumn{3}{c}{$b_2f$}\\
 $n$ & PCO & CV & Or & PCO & CV & Or\\ \hline
 $250$ & 0.56 & 0.70 & 0.30 & 0.51 & 0.92 & 0.28\\
 & {\small (0.40)} & {\small (1.44)} & {\small (0.22)} & {\small (0.33)} & {\small (2.83)} & {\small (0.22)}\\
 $500$ & 0.30 & 0.32 & 0.16 & 0.29 & 0.33 & 0.16\\
 & {\small (0.23)} & {\small (0.68)} & {\small (0.13)} & {\small (0.23)} & {\small (0.60)} & {\small (0.13)}\\
 $1000$ & 0.14 & 0.29 & 0.08 & 0.15 & 0.30 & 0.08\\
 & {\small (0.10)} & {\small (0.91)} & {\small (0.06)} & {\small (0.15)} & {\small (1.14)} & {\small (0.07)}\\ \multicolumn{6}{c}{}\\
\end{tabular}
\begin{tabular}{c|ccc|ccc}
 & \multicolumn{3}{c}{$b_3f$} & \multicolumn{3}{c}{$b_4f$}\\
 $n$ & PCO & CV & Or & PCO & CV & Or\\ \hline
 $250$ & 0.91 & 0.85 & 0.61 & 0.67 & 0.71 & 0.39\\
 & {\small (0.60)} & {\small (0.72)} & {\small (0.37)} & {\small (0.37)} & {\small (1.09)} & {\small (0.21)}\\
 $500$ & 0.45 & 0.47 & 0.32 & 0.38 & 0.33 & 0.22\\
 & {\small (0.27)} & {\small (0.77)} & {\small (0.19)} & {\small (0.21)} & {\small (0.21)} & {\small (0.13)}\\
 $1000$ & 0.21 & 0.30 & 0.16 & 0.21 & 0.27 & 0.12\\
 & {\small (0.11)} & {\small (0.63)} & {\small (0.08)} & {\small (0.12)} & {\small (0.58)} & {\small (0.06)}\\ \multicolumn{6}{c}{}\\
\end{tabular}
\caption{100*MISE (with 100*std in parenthesis below) for the estimation of $bf$ corresponding to the four examples $b_1,\dots, b_4$, 200 repetitions, $X\sim {\mathcal N}(0,1)$ and $\sigma = 0.7$. Columns PCO and CV correspond to the two competing methods. "Or" is for "oracle" and gives the average error of the best possible estimator of the collection, computed for each sample.}\label{tab_bf_XGaussSig07}
\end{table}
\begin{table}
\hspace{-0cm}\begin{tabular}{c|ccc|ccc}
 & \multicolumn{3}{c}{$b_1f$} & \multicolumn{3}{c}{$b_2f$}\\
 $n$ & PCO & CV & Or & PCO & CV & Or\\ \hline
 $250$ & 0.71 & 0.54 & 0.54 & 0.78 & 0.62 & 0.62\\   
 & {\small (0.14)} & {\small (0.15)} & {\small (0.08)} & {\small (0.15)} & {\small (0.17)} & {\small (0.09)}\\
 $500$ & 0.62 & 0.47 & 0.51 & 0.70 & 0.54 & 0.28\\
 & {\small (0.13)} & {\small (0.14)} & {\small (0.08)} & 
{\small (0.15)} & {\small (0.18)} & {\small (0.08)}\\
 $1000$ & 0.55 & 0.41 & 0.47 & 0.63 & 0.50 & 0.54\\
 & {\small (0.11)} & {\small (0.14)} & {\small (0.07)} & {\small (0.12)} & {\small (0.17)} &  {\small (0.07)}\\
\multicolumn{6}{c}{}\\
\end{tabular}
\begin{tabular}{c|ccc|ccc}
 & \multicolumn{3}{c}{$b_3f$} & \multicolumn{3}{c}{$b_4f$}\\
 $n$ & PCO & CV & Or & PCO & CV & Or\\ \hline
 $250$ & 0.35 & 0.28 & 0.31 & 0.57 & 0.39 & 0.37\\   
 & {\small (0.04)} &  {\small (0.05)} & {\small (0.03)} & {\small (0.17)} & {\small (0.14)} & {\small (0.07)}\\
 $500$ & 0.32 & 0.26 & 0.28 & 0.48 & 0.32 & 0.33\\
 & {\small (0.04)} & {\small (0.06)} & {\small (0.03)} & {\small (0.13)} & {\small (0.14)} & {\small (0.07)}\\
 $1000$ & 0.30 & 0.24 & 0.26 & 0.39 & 0.28 & 0.28\\
 & {\small (0.03)} & {\small (0.07)} & {\small (0.03)} & {\small (0.11)} & {\small (0.11)} & {\small (0.06)}\\
\multicolumn{6}{c}{}\\
\end{tabular}
\caption{Means of selected bandwidths (with std in parenthesis below) for the estimation of $bf$, 200 repetitions, $X\sim {\mathcal N}(0,1)$, $\sigma = 0.1$.}\label{tab_bf_bandwidth_XGauss}
\end{table}
\noindent
Tables \ref{tab_bf_XGauss} and \ref{tab_bf_XGaussSig07} give the MISE obtained for 200 repetitions and sample sizes $250$, $500$ and $1000$, for the estimation of $bf$ with PCO and CV methods, for $\sigma = 0.1$ (Table \ref{tab_bf_XGauss}) and $\sigma = 0.7$ (Table \ref{tab_bf_XGaussSig07}). The column "Or" gives the mean of the minimal squared errors for each sample, which requires to use the unknown true function and represents what could be obtained at best (that is if the best possible bandwidth was chosen for each sample). We postpone results with $X\sim\gamma(3,2)/5$ in Appendix \ref{Append} since they are similar. We can see that the PCO method is globally better than the CV, with no important difference, and the oracle shows that we are in the right orders even if not at best. 
\\
Table \ref{tab_bf_bandwidth_XGauss} presents the mean of the selected bandwidths in each case PCO and CV, and allows to compare it with the oracle bandwidth, for the same paths and configurations as previously. The conclusion here is that, in mean, the PCO method over-estimates the oracle bandwidth, while the CV method slightly under-evaluates it. Clearly, the too-large choice gives better results.
%


%
\subsection{Estimation of $b$}
Now, we present the results for the estimation of the regression function $b$, obtained either with a single-bandwidth estimator, or with the ratio of two adaptive PCO estimators of $bf$ and $f$. 
\\
The PCO estimators are the ones studied above, which proved to be good estimators (see also  the study for the estimation of $f$ in Comte and Marie \cite{CM20}). We simply take a point by point ratio of the two adaptive PCO estimators. The oracle we refer to is  computed with the estimator of $b$ obtained as a quotient of the two oracles of $bf$ and $f$ for each path. It is the best performance we can expect with a PCO-ratio strategy.
\\
For the one-bandwidth Nadaraya-Watson estimator $\widehat{b}_{n,h}$, it is computed with the Gaussian kernel $N(.)$. The leave-one-out cross-validation criterion which is minimized for the bandwidth selection is
\begin{displaymath}
CV_{\rm NW}(h) :=
\sum_{i = 1}^{n}(Y_i - b_{n,h}^{(-i)}(X_i))^2
\end{displaymath}
with
\begin{displaymath}
b_{n,h}^{(-i)}(x) :=
\sum_{j = 1,j\neq i}^{n}\frac{N((X_j - x)/h)}{\sum_{k = 1,k\neq i}^{n}N((X_k - x)/h)}Y_j.
\end{displaymath}
%


%
\subsubsection{Small noise case}
We've started the study with $\sigma = 0.1$, which in our mind was an easy case (see Figure \ref{fig1}).
\begin{table}
\hspace{-0cm}\begin{tabular}{c|ccc|ccc}
 & \multicolumn{3}{c}{$b_1$} & \multicolumn{3}{c}{$b_2$}\\
 $n$ & CV & PCO & Or & CV & PCO & Or\\ \hline
 $250$ & 0.34 & 2.80 & 1.15 & 0.43 & 3.41 & 1.37\\
 & {\small (0.19)} & {\small (2.80)} & {\small (1.04)} & {\small (0.24)} & {\small (3.98)} &  {\small (2.77)}\\ 
 $500$ & 0.19 & 1.44 & 0.61 & 0.23 & 1.49 & 0.58\\
 & {\small (0.08)} & {\small (1.05)} & {\small (0.53)} & {\small (0.12)} & {\small (1.83)} & {\small (1.28)}\\
 $1000$ & 0.10 & 0.74 & 0.37 & 0.13 & 0.53 & 0.26\\
 & {\small (0.05)} & {\small (0.51)} & {\small (0.31)} & {\small (0.05)}  & {\small (0.58)} & {\small (0.28)}\\
 \multicolumn{6}{c}{}\\
\end{tabular}
\begin{tabular}{c|ccc|ccc}
 & \multicolumn{3}{c}{$b_3$} & \multicolumn{3}{c}{$b_4$}\\
 $n$ & CV & PCO & Or & CV & PCO & Or\\ \hline
 $250$ & 1.34 & 7.93 & 6.30 & 0.39 & 2.87 & 1.22\\
 & {\small (0.75)} & {\small (5.09)} & {\small (4.72)} & {\small (0.18)} & {\small (2.07)} & {\small (0.69)}\\ 
 $500$ & 0.66 & 4.42 & 2.96 & 0.22 & 1.72 & 0.81\\
 & {\small (0.31)} & {\small (2.58)} & {\small (2.09)} & {\small (0.09)} & {\small (0.94)} & {\small (0.45)}\\
 $1000$ & 0.30 & 2.30 & 1.65 & 0.12 & 0.92 & 0.48\\
 & {\small (0.08)} & {\small (1.24)} & {\small (1.01)} & {\small (0.04)} & {\small (0.55)} & {\small (0.21)}\\
 \multicolumn{6}{c}{}\\
\end{tabular}
\caption{100*MISE (with 100*std in parenthesis below) for the estimation of $b_i$, $i = 1, \dots,4$, 200 repetitions, $X\sim {\mathcal N}(0,1)$, $\sigma = 0.1$. CV and PCO are the two competing methods. Column "Or" gives the average of ISE for the ratio of the two best estimators of $bf$ and $f$ in the collection.}\label{tab_b_XGauss}
\end{table}
\begin{table}
\begin{tabular}{c|cccc}
 $n$ & $b_1$ & $b_2$ & $b_3$ & $b_4$\\ \hline
 $250$ & 0.13 & 0.13 & 0.06 & 0.10\\
 & {\small (0.02)} & {\small (0.03)} & {\small (0.01)} & {\small (0.02)}\\
 $500$ & 0.12 & 0.11 & 0.05 & 0.09\\
 & {\small (0.02)} & {\small (0.02)} & {\small (0.01)} & {\small (0.01)}\\
 $1000$ & 0.11 & 0.09 & 0.05 & 0.08\\
 & {\small (0.01)} & {\small (0.02)} & {\small (0.01)} & {\small (0.01)}\\ \multicolumn{5}{c}{}\\
\end{tabular}
\caption{Mean of selected bandwidth (with std in parenthesis below) with the CV method for NW-single bandwidth estimator of $b$, $\sigma = 0.1$.}\label{tab_b_bandwithNW}
\end{table}
\begin{figure}
\includegraphics[width=12cm,height=5cm]{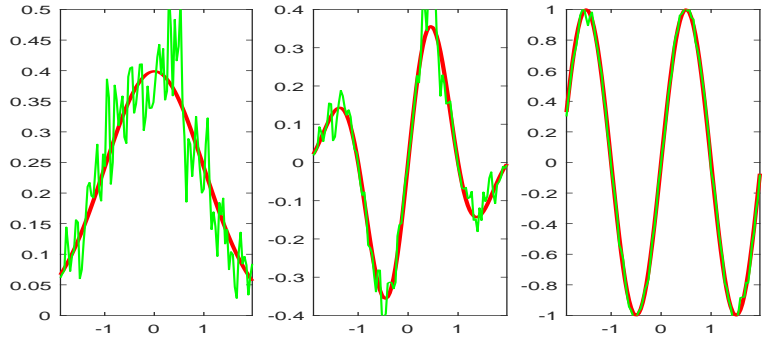}
\caption{Small bandwidth effect, the ratio of two bad estimators is a good estimator. For bandwidth $h = 0.01$ and $n = 1000$, true (bold) and estimated $f$ (left), $b_1f$ (middle), and $b_1$ (right).}\label{fig3}
\end{figure}
\noindent
Table \ref{tab_b_XGauss} presents the results for the estimation of $b$, either with the $CV_{{\rm NW}}$ criterion, with ratio of PCO of $bf $ and $f$, or with the ratio of the best estimators of $bf$ and $f$ in the collection. More precisely, the column "Or" gives here the MISE computed with the estimator of $b$ obtained as a quotient of the two oracles of $bf$ and $f$ in each example and for each sample path. Clearly, the performance of the Nadaraya-Watson cross-validation criterion is much better, within a multiplicative factor from 2 and up to 6. The variance of the quotient estimators (oracle and PCO) are large, which shows that the mean performance is probably deteriorated by a few very bad results. However, the result is puzzling: even the ratio of the two best estimators of the numerator and denominator does not reach the good performance of the single-bandwidth CV method. Table \ref{tab_b_bandwithNW} shows in addition that the selected bandwidths are in mean very small. We can check that the ratio of this bad numerator divided by a bad denominator fits well to the $b$ quotient function: this is illustrated by Figure \ref{fig3}. It is likely that both imply a compensation resulting in a locally, and thus also globally, better estimate. We can notice that the selected bandwidth also decrease more slowly when $n$ increases (see Table \ref{tab_b_bandwithNW}) than for the estimator of $bf$ (see Table \ref{tab_bf_bandwidth_XGauss}). Our explanation (see the heuristic Remark \ref{heuristic_weights_variance} below) is that the risk of the Nadaraya-Watson estimator behaves as $C(h^{2\alpha} +  \sigma^2/(nh))$, for some $\alpha>0$ related to the regularity of $b$, like in the projection least-squares method (see e.g. Baraud \cite{BARAUD02}). In the small noise case, $\sigma^2$ makes the variance term negligible, so that the bandwidth selection method aims at having small bias term $h^{2\alpha}$. On the other hand, the risk decomposition of the estimator of $bf$ involves a variance term of order $\|K\|_{2}^{2}\mathbb E(Y_{1}^{2})/(nh)$, and in all our examples, empirical evaluations of $\mathbb E(Y^2)$ is in the range $[0.34,0.70]$, making the ratio with $\sigma^2$ between 34 and 70. In other words, the variance term for this estimator is 34 to 70 times larger. This is why it is important to investigate large noise case and a less favorable  signal to noise ratio.
%


%
\subsubsection{Large noise case}
When setting $\sigma=0.7$, the empirical order of ${\mathbb E}(Y^2)$ for the four models is between 0.91 and 1.31, which divided by $\sigma^2$ gives now a value between 1.85 and 2.67. This is much smaller than previously. This corresponds to a more difficult estimation problem, as can be seen from Figure \ref{fig2}.
\begin{table}[!h]
\hspace{-0cm}\begin{tabular}{c|ccc|ccc}
 & \multicolumn{3}{c}{$b_1$} & \multicolumn{3}{c}{$b_2$}\\
 $n$ & CV & PCO & Or & CV & PCO & Or\\ \hline
 $250$ & 7.81 & 9.02 & 6.54 & 8.32 & 8.61 & 5.18\\
 & {\small (5.32)} & {\small (5.86)} & {\small (5.13)} & {\small (6.02)} & {\small (6.67)} &  {\small (4.87)}\\
$500$ & 4.33 & 4.34  & 3.25 & 4.57 & 4.83 & 3.02\\
 & {\small (3.09)} & {\small (2.71)} & {\small (2.15)} & {\small (2.86)} & {\small (4.07)} & {\small (2.58)}\\
 $1000$ & 2.26 & 2.19 & 1.72 & 2.38 & 2.13 & 1.40\\
 & {\small (1.37)} & {\small (1.30)} & {\small (1.16)} & {\small (1.39)} & {\small (2.04)} & {\small (1.15)}\\ \multicolumn{6}{c}{}\\
\end{tabular}
\begin{tabular}{c|ccc|ccc}
 & \multicolumn{3}{c}{$b_3$} & \multicolumn{3}{c}{$b_4$}\\
 $n$ & CV & PCO & Or & CV & PCO & Or\\ \hline
 $250$ & 16.2 & 18.5 & 14.8 & 7.84 & 9.36 & 7.89\\
 & {\small (8.47)} & {\small (10.3)} & {\small (9.06)} & {\small (4.94)} & {\small (4.62)} & {\small (4.94)}\\ 
$500$ & 8.54 & 9.28 & 7.58 & 4.41 & 5.01 & 4.23\\
 & {\small (3.58)} & {\small (5.26)} & {\small (4.41)} & {\small (2.83)} & {\small (2.31)} & {\small (2.27)}\\
 $1000$ & 4.54 & 4.62 & 3.87 & 2.40 & 2.85 & 2.35\\
 & {\small (1.81)} & {\small (2.35)} & {\small (2.18)} & {\small (1.28)} & {\small (1.42)} & {\small (1.20)}\\
 \multicolumn{6}{c}{}\\
\end{tabular}
\caption{100*MISE (with 100*std in parenthesis below) for the estimation of $b_i$, $i = 1, \dots, 4$, 200 repetitions, $X\sim {\mathcal N}(0,1)$, $\sigma = 0.7$. CV and PCO are the two competing methods. Column "Or" gives the average of ISE for the ratio of the two best estimators of $bf$ and $f$ in the collection.}\label{tab_b_XGaussSig07}
\end{table}
\newline
We now comment the results given in Table \ref{tab_b_XGaussSig07}. The MISE are quite larger, but in Figure \ref{fig4}, we show examples of estimated curves in this case, and the associated orders of MISEs, computed for 25 repetitions; they are not as good as for small noise, but still reasonable. The results in Table \ref{tab_b_XGaussSig07} show that the MISE have now the same orders, and the oracles can be much better than the results of the Nadarya-Watson estimator. The selected bandwidths are larger and decreasing with $n$ (see Table \ref{tab_b_bandwithNWsig07} in Appendix).
\\
\\
The conclusion of this study is that adaptive estimation of functions with kernel estimators and bandwidth selection relying on the PCO method proposed by Lacour {\it et al.} \cite{LMR17} gives very good results in theory and practice, not only for density estimation. However, for regression function estimation, one bandwidth selected with a criterion directly suited to the regression function is safer than the two different bandwidths selected when considering the Nadaraya-Watson estimator as a quotient of two functions that may be estimated separately.  The results are not bad, but the strategy must be devoted to more complicated contexts where direct estimators of $b$ are not feasible.
%


%
\begin{remark}\label{heuristic_weights_variance}
Note that if $h = h'$, which means that $\widehat b_{n,h,h'}$ is the usual Nadaraya-Watson estimator $\widehat b_{n,h}$,
\begin{displaymath}
\widehat b_{n,h}(x) =
\sum_{i = 1}^{n}w_{n,h}^{(i)}(x)\varepsilon_i
\quad
{\rm with}\quad
w_{n,h}^{(i)}(x) :=\frac{K((X_i - x)/h)}{\sum_{j = 1}^{n}K((X_j - x)/h)}.
\end{displaymath}
Then,
\begin{displaymath}
\widehat b_{n,h}(x) - b(x) =
\sum_{i = 1}^{n}w_{n,h}^{(i)}(x)(b(X_i) - b(x)) +
\sum_{i = 1}^{n}w_{n,h}^{(i)}(x)\varepsilon_i,
\end{displaymath}
and for a nonnegative kernel with compact support $[-1,1]$, if the regression function $b$ is Lispchitz continuous, then
\begin{displaymath}
\mathbb E[(\widehat b_{n,h}(x) - b(x))^2]
\leqslant
Ch^2 +
\sigma^2\mathbb E\left(\sum_{i = 1}^{n}w_{n,h}^{(i)}(x)^2\right).
\end{displaymath}
Moreover,
\begin{displaymath}
\mathbb E\left(\sum_{i = 1}^{n}w_{n,h}^{(i)}(x)^2\right) =
\frac{1}{nh}\mathbb E\left(
\frac{\frac{1}{nh}\sum_{i = 1}^{n}K((X_i - x)/h)^2}{[\frac{1}{nh}\sum_{i = 1}^{n}K((X_i - x)/h)]^2}\right),
\end{displaymath}
and for a fixed $h > 0$, by the law of large numbers,
\begin{displaymath}
\frac{1}{nh}\sum_{i = 1}^{n}K\left(\frac{X_i - x}{h}\right)^2
\xrightarrow[n\rightarrow\infty]{{\rm a.s.}}
\frac{1}{h}\mathbb E\left[K\left(\frac{X_1 - x}{h}\right)^2\right] =
\int_{-\infty}^{\infty}K(u)^2f(x+uh)du
\end{displaymath}
and
\begin{eqnarray*}
 \left[\frac{1}{nh}\sum_{i = 1}^{n}K\left(\frac{X_i - x}{h}\right)\right]^2
 & \xrightarrow[n\rightarrow\infty]{{\rm a.s.}} &
 \mathbb E\left[\frac{1}{h}
 K\left(\frac{X_1 - x}{h}\right)\right]^2\\
 & &
 \hspace{2cm}
 =\left[\int_{-\infty}^{\infty}K(u)f(x + uh)du\right]^2.
\end{eqnarray*}
Then, for small $h$, the first limit has order $\|K\|_{2}^{2}f(x)$ and the second one has order $f^2(x)$. To sum up, the risk of $\widehat b_{n,h}(x)$ is heuristically of order $Ch^2 +\sigma^2\|K\|_{2}^{2}f(x)/(nh)$. This explains why, for small $\sigma^2$, the variance term gets small and the estimator can choose small bandwidth to make the bias as small as possible.
\end{remark}
\begin{figure}[h!]
\includegraphics[width=12cm,height=8cm]{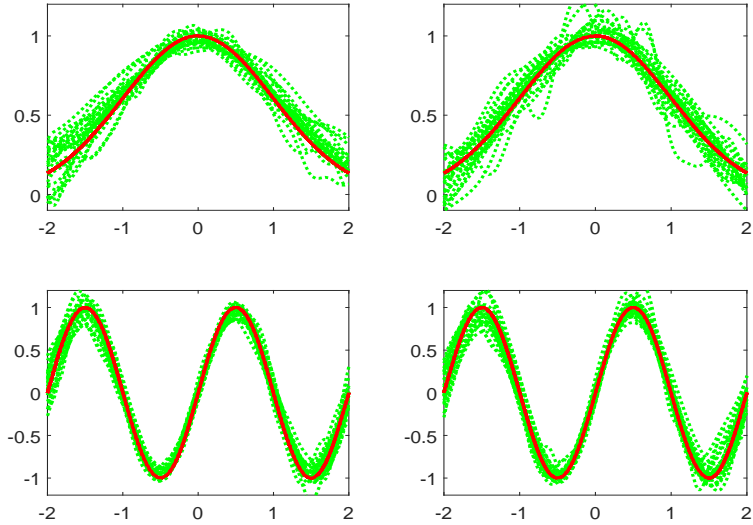}
\caption{Example of 25 estimated $b$ with large noise $\sigma = 0.7$, $n = 1000$, true $b$ in bold red, estimated $b$ with sign-bandwidth CV selection (dotted green, left) and with ratio of PCO (dotted green, right) for functions 1 and 3. 100*MSE$_{(100\;std)}$ are
$2.62_{(1.64)}$ and $2.87_{(2.41)}$ (top) ; $4.71_{(1.74)}$ and $5.49_{(2.85)}$.}\label{fig4}
\end{figure}
%


%
\section{Proofs}\label{section_proofs}
%


%
\subsection{Proof of Proposition \ref{bound_MISE_2bNW}}
On the one hand, by Comte \cite{COMTE17}, Proposition 3.3.1,
\begin{equation}\label{bound_MISE_2bNW_1}
\mathbb E(\|\widehat f_{n,h'} - f\|_{2}^{2})
\leqslant
\|f - f_{h'}\|_{2}^{2} +\frac{\mathfrak c_K}{nh'}
\end{equation}
and, by Proposition \ref{bound_MISE_numerator_2bNW},
\begin{equation}\label{bound_MISE_2bNW_2}
\mathbb E(\|\widehat{bf}_{n,h} - bf\|_{2}^{2})
\leqslant
\|bf - (bf)_h\|_{2}^{2} +\frac{\mathfrak c_{K,Y}}{nh}
\end{equation}
For the proof of Inequality (\ref{bound_MISE_2bNW_1}), the reader can also refer to Tsybakov \cite{TSYBAKOV09}. On the other hand,
\begin{displaymath}
\widehat b_{n,h,h'} - b =
\left(\frac{\widehat{bf}_{n,h} - bf}{\widehat f_{n,h'}} +
bf\left(\frac{1}{\widehat f_{n,h'}} -\frac{1}{f}\right)\right)\mathbf 1_{\widehat f_{n,h'}(.) > m_n/2} -
b\mathbf 1_{\widehat f_{n,h'}(.)\leqslant m_n/2}.
\end{displaymath}
Then,
\begin{eqnarray*}
 & &
 \|\widehat b_{n,h,h'} - b\|_{2,f,\mathcal S_n}^{2}\\
 & &
 \hspace{1cm}\leqslant
 \frac{8\mathfrak c_1}{m_{n}^{2}}\left(\|\widehat{bf}_{n,h} - bf\|_{2}^{2}
 +\int_{-\infty}^{\infty}b(x)^2f(x)|\widehat f_{n,h'}(x) - f(x)|^2dx
 \right)\\
 & &
 \hspace{2cm}
 + 2\int_{\mathcal S_n}
 b(x)^2f(x)\mathbf 1_{|\widehat f_{n,h'}(x) - f(x)| > m_n/2}dx
\end{eqnarray*}
with $\mathfrak c_1 :=\|f\|_{\infty}\vee\|f\|_{\infty}^{2}$.
\\
\\
By Markov's inequality,
\begin{eqnarray*}
 \mathbb E(\|\widehat b_{n,h,h'} - b\|_{2,f,\mathcal S_n}^{2})
 & \leqslant &
 \frac{8\mathfrak c_1}{m_{n}^{2}}(\mathbb E(\|\widehat{bf}_{n,h} - bf\|_{2}^{2}) +
 \mathfrak c_{b,f}
 \mathbb E(\|\widehat f_{n,h'} - f\|_{2}^{2}))\\
 & &
 + 2\mathfrak c_{b,f}\int_{\mathcal S_n}
 \mathbb P\left(|\widehat f_{n,h'}(x) - f(x)| >\frac{m_n}{2}\right)dx\\
 & \leqslant &
 \frac{8(\mathfrak c_1\vee 1)}{m_{n}^{2}}(\mathbb E(\|\widehat{bf}_{n,h} - bf\|_{2}^{2}) +
 2\mathfrak c_{b,f}\mathbb E(\|\widehat f_{n,h'} - f\|_{2}^{2})).
\end{eqnarray*}
Inequalities (\ref{bound_MISE_2bNW_1}) and (\ref{bound_MISE_2bNW_2}) allow to conclude.
%


%
\subsection{Proof of Theorem \ref{bound_GL}}
First, let us prove the following lemma.
%


%
\begin{lemma}\label{preliminaries_GL}
Consider
\begin{displaymath}
(bf)_h :=
K_h\ast (bf)
\quad\textrm{and}\quad
(bf)_{h,\eta} :=
K_{\eta}\ast K_h\ast (bf).
\end{displaymath}
Then,
\begin{displaymath}
\mathbb E(\widehat{bf}_{n,h}(x)) =
(bf)_h(x)
\quad\textrm{and}\quad
\mathbb E(\widehat{bf}_{n,h,\eta}(x)) =
(bf)_{h,\eta}(x).
\end{displaymath}
\end{lemma}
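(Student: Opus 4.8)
The plan is to compute both expectations directly, by averaging out the noise first and then recognizing a convolution; this is essentially the standard Parzen--Rosenblatt bias computation, adapted to the fact that the summands carry the $Y_i$'s.

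For the first identity, note that $\widehat{bf}_{n,h}(x)=\frac1n\sum_{i=1}^n K_h(X_i-x)Y_i$, so by linearity and the fact that $(X_1,Y_1),\dots,(X_n,Y_n)$ are i.i.d.\ one reduces to $\mathbb E(\widehat{bf}_{n,h}(x))=\mathbb E[K_h(X_1-x)Y_1]$. Writing $Y_1=b(X_1)+\varepsilon_1$ and using that $\varepsilon_1$ is centered and independent of $X_1$, the cross term splits as $\mathbb E[K_h(X_1-x)]\,\mathbb E[\varepsilon_1]=0$, where the first factor is finite because $K$ is bounded and $X_1$ has a density. Hence $\mathbb E(\widehat{bf}_{n,h}(x))=\mathbb E[K_h(X_1-x)b(X_1)]=\int_{\mathbb R}K_h(u-x)b(u)f(u)\,du$, which is $(K_h\ast(bf))(x)=(bf)_h(x)$: after the change of variable $u=x+hv$ this is $\int K(v)(bf)(x+hv)\,dv$, and under the symmetry of $K$ (as in Assumption \ref{assumption_K_LMR}) this coincides with the convolution $K_h\ast(bf)$ in the sign convention of the paper. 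The use of Fubini here is legitimate since $\int|K_h(u-x)|\,|b(u)|f(u)\,du\le\|K\|_\infty h^{-1}\,\mathbb E|b(X_1)|<\infty$ under the standing moment assumptions on $Y_1$.

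For the second identity I would argue the quickest way: $\widehat{bf}_{n,h,\eta}=K_\eta\ast\widehat{bf}_{n,h}$ is a fixed (deterministic) convolution applied to $\widehat{bf}_{n,h}$, so expectation commutes with it (again by Fubini, using boundedness of $K_\eta$ and integrability of $\widehat{bf}_{n,h}$), giving $\mathbb E(\widehat{bf}_{n,h,\eta})=K_\eta\ast\mathbb E(\widehat{bf}_{n,h})=K_\eta\ast(bf)_h=K_\eta\ast K_h\ast(bf)=(bf)_{h,\eta}$ by associativity of convolution. Equivalently, one repeats the computation of the first part starting from $\widehat{bf}_{n,h,\eta}(x)=\frac1n\sum_{i=1}^n Y_i(K_\eta\ast K_h)(X_i-x)$, with $K_\eta\ast K_h$ playing the role of $K_h$.

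I do not expect a genuine obstacle: the only points deserving a line of justification are the interchange of expectation with the sum and with the convolution (Fubini, covered by boundedness of the kernels and the finite first moment of $Y_1$), the vanishing of the noise contribution (independence plus centering of $\varepsilon_1$), and the bookkeeping needed to match the integral produced by the computation with the convention chosen for $K_h\ast(bf)$.
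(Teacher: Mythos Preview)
Your proposal is correct and follows essentially the same approach as the paper: split $Y_i=b(X_i)+\varepsilon_i$, use independence and centering of $\varepsilon_i$ to kill the noise term, and identify the remaining integral as the convolution. For the second identity the paper simply repeats the first computation with $K_\eta\ast K_h$ in place of $K_h$ (the alternative you also mention), rather than invoking commutation of expectation with the deterministic convolution, but this is a cosmetic difference; your remark about the sign convention and the need for symmetry of $K$ is a point the paper glosses over.
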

%


%
\begin{proof}
Since $\mathbb E(\varepsilon_k) = 0$ and $X_k$ and $\varepsilon_k$ are independent for every $k\in\{1,\dots,n\}$,
\begin{eqnarray*}
 \mathbb E(\widehat{bf}_{n,h}(x)) & = &
 \frac{1}{n}
 \sum_{k = 1}^{n}\mathbb E(b(X_k)K_h(X_k - x)) +
 \frac{1}{n}
 \sum_{k = 1}^{n}\mathbb E(\varepsilon_k)\mathbb E(K_h(X_k - x))\\
 & = &
 \int_{-\infty}^{\infty}K_h(y - x)b(y)f(y)dy =
 (bf)_h(x)
\end{eqnarray*}
and
\begin{eqnarray*}
 \mathbb E(\widehat{bf}_{n,h,\eta}(x)) & = &
 \frac{1}{n}
 \sum_{k = 1}^{n}\mathbb E(b(X_k)(K_{\eta}\ast K_h)(X_k - x))\\
 & &
 +\frac{1}{n}
 \sum_{k = 1}^{n}\mathbb E(\varepsilon_k)\mathbb E((K_{\eta}\ast K_h)(X_k - x))\\
 & = &
 \int_{-\infty}^{\infty}(K_{\eta}\ast K_h)(y - x)b(y)f(y)dy =
 (bf)_{h,\eta}(x).
\end{eqnarray*}
\end{proof}
\noindent
Since
\begin{displaymath}
\widehat h_n\in\arg\min_{h\in\mathcal H_n}\{A_n(h) + V_n(h)\},
\end{displaymath}
for every $h\in\mathcal H_n$,
\begin{equation}\label{bound_GL_1}
\mathbb E(\|\widehat{bf}_{n,\widehat h_n} - bf\|_{2}^{2})
\leqslant
3\mathbb E(\|\widehat{bf}_{n,h} - bf\|_{2}^{2}) + 6V_n(h) + 6\mathbb E(A_n(h))
\end{equation}
Let us find a suitable control of $\mathbb E(A_n(h))$. First of all, for any $h,\eta\in\mathcal H_n$,
\begin{eqnarray*}
 \|\widehat{bf}_{n,h,\eta} -\widehat{bf}_{n,\eta}\|_{2}^{2}
 & \leqslant &
 3(\|\widehat{bf}_{n,h,\eta} - (bf)_{h,\eta}\|_{2}^{2}\\
 & &
 +\|\widehat{bf}_{n,\eta} - (bf)_{\eta}\|_{2}^{2}
 +\|(bf)_{h,\eta} - (bf)_{\eta}\|_{2}^{2}).
\end{eqnarray*}
Then,
\begin{eqnarray}
 \label{bound_GL_2}
 A_n(h) & \leqslant &
 3\left[\sup_{\eta\in\mathcal H_n}
 \left(\|\widehat{bf}_{n,h,\eta} -
 (bf)_{h,\eta}\|_{2}^{2} -\frac{V_n(\eta)}{6}\right)_+\right.\\
 & &
 \left.
 +\sup_{\eta\in\mathcal H_n}
 \left(\|\widehat{bf}_{n,\eta} -
 (bf)_{\eta}\|_{2}^{2} -\frac{V_n(\eta)}{6}\right)_+
 +\|(bf)_{h,\eta} - (bf)_{\eta}\|_{2}^{2}\right].
 \nonumber
\end{eqnarray}
On the one hand,
\begin{displaymath}
\|(bf)_{h,\eta} - (bf)_{\eta}\|_2 =
\|K_{\eta}\ast(K_h\ast(bf) - bf)\|_2
\leqslant
\|K\|_1\|bf - (bf)_h\|_2.
\end{displaymath}
On the other hand, let $\mathcal C$ be a countable and dense subset of the unit sphere of $\mathbb L^2(\mathbb R,dx)$ and consider $\mathfrak m(n) > 0$. Then, by Lemma \ref{preliminaries_GL},
\begin{eqnarray*}
 & &
 \mathbb E\left[
 \sup_{\eta\in\mathcal H_n}
 \left(\|\widehat{bf}_{n,\eta} -
 (bf)_{\eta}\|_{2}^{2} -\frac{V_n(\eta)}{6}\right)_+
 \right]\\
 & &
 \hspace{2cm}\leqslant
 \sum_{\eta\in\mathcal H_n}
 \mathbb E\left(\left(\sup_{\psi\in\mathcal C}
 2\mathbf V_{n,\eta}(\psi)^2 -\frac{V_n(\eta)}{6}
 \right)_+\right)
 + 2\sum_{\eta\in\mathcal H_n}\mathbb E(\mathbf W_{n,\eta})
\end{eqnarray*}
where, for any $\psi\in\mathcal C$,
\begin{displaymath}
\mathbf V_{n,\eta}(\psi) :=
\frac{1}{n}\sum_{k = 1}^{n}(v_{\psi,n,\eta}(X_k,Y_k) -\mathbb E(v_{\psi,n,\eta}(X_k,Y_k)))
\end{displaymath}
with
\begin{displaymath}
v_{\psi,n,\eta}(x,y) :=
y\mathbf 1_{|y|\leqslant\mathfrak m(n)}\int_{-\infty}^{\infty}\psi(u)K_{\eta}(x - u)du
\textrm{ $;$ }
\forall (x,y)\in\mathbb R^2,
\end{displaymath}
and
\begin{displaymath}
\mathbf W_{n,\eta} :=
\frac{1}{n}\sum_{k = 1}^{n}\int_{-\infty}^{\infty}|Y_k\mathbf 1_{|Y_k| >\mathfrak m(n)}K_{\eta}(X_k - u) -
\mathbb E(Y_k\mathbf 1_{|Y_k| >\mathfrak m(n)}K_{\eta}(X_k - u))|^2du.
\end{displaymath}
In order to apply Talagrand's inequality (see Klein and Rio \cite{KR05}), we compute bounds.
\begin{itemize}
 \item For every $\psi\in\mathcal C$, $x\in\mathbb R$ and $y\in [-\mathfrak m(n),\mathfrak m(n)]$,
 \begin{eqnarray*}
  |v_{\psi,n,\eta}(x,y)| & \leqslant &
  |y|\int_{-\infty}^{\infty}
  |\psi(u)|\cdot
  |K_{\eta}(u - x)|du\\
  & \leqslant &
  |y|\cdot\|K_{\eta}(\cdot - x)\|_2
  \leqslant
  \frac{\mathfrak m(n)\|K\|_2}{\sqrt{\eta}}.
 \end{eqnarray*}
 Then,
 \begin{displaymath}
 \sup_{\psi\in\mathcal C}\|v_{\psi,n,\eta}\|_{\infty}
 \leqslant\mathfrak m_1(n,\eta) :=
 \frac{\mathfrak m(n)\|K\|_2}{\sqrt{\eta}}.
 \end{displaymath}
 \item By Proposition \ref{bound_MISE_2bNW} and Lemma \ref{preliminaries_GL},
 \begin{eqnarray*}
  \mathbb E\left(\sup_{\psi\in\mathcal C}
  \mathbf V_{n,\eta}(\psi)^2\right)
  & \leqslant &
  \int_{-\infty}^{\infty}\textrm{var}(\widehat{bf}_{n,\eta}(u))du\\
  & \leqslant &
  \mathfrak m_{2}(n,\eta) :=
  \frac{\mathfrak c_{K,Y}}{n\eta}.
 \end{eqnarray*}
 \item For any $\psi\in\mathcal C$ and $k\in\{1,\dots,n\}$,
 \begin{eqnarray*}
  & &
  \textrm{var}(v_{\psi,n,\eta}(X_k,Y_k))\\
  & &
  \hspace{1cm}\leqslant
  \mathbb E\left(\left|Y_k\int_{-\infty}^{\infty}\psi(u)K_{\eta}(X_k - u)du\right|^2\right)\\
  & &
  \hspace{1cm}\leqslant
  \mathbb E((K_{\eta}\ast\psi)(X_1)^4)^{1/2}\mathbb E(Y_{1}^{4})^{1/2}
  \leqslant
  \|f\|_{\infty}^{1/2}
  \|K_{\eta}\ast\psi\|_{4}^{2}
  \mathbb E(Y_{1}^{4})^{1/2}.
 \end{eqnarray*}
 By Young's inequality, $\|K_{\eta}\ast\psi\|_4\leqslant\|\psi\|_2\|K_{\eta}\|_{4/3}$. So,
 \begin{displaymath}
 \textrm{var}(v_{\psi,n,\eta}(X_k,Y_k))
 \leqslant
 \mathfrak m_3 :=\frac{\mathfrak m_{f,K}}{\sqrt{\eta}}
 \end{displaymath}
 with $\mathfrak m_{f,K} :=\|f\|_{\infty}^{1/2}\|K\|_{4/3}^{2}\mathbb E(Y_{1}^{4})^{1/2}$.
\end{itemize}
By applying Talagrand's inequality to $(v_{\psi,n,\eta})_{\psi\in\mathcal C}$ and to the independent random variables $(X_1,Y_1),\dots,(X_n,Y_n)$, there exist three constants $\mathfrak c_1,\mathfrak c_2,\mathfrak c_3 > 0$, not depending on $n$ and $\eta$, such that
\begin{small}
\begin{eqnarray*}
 & &
 \mathbb E\left(\left(\sup_{\psi\in\mathcal C}\mathbf V_{n,\eta}(\psi)^2 - 4\mathfrak m_2(n,\eta)\right)_+\right)\\
 & &
 \hspace{0.75cm}\leqslant
 \mathfrak c_1\left(\frac{\mathfrak m_3}{n}\exp\left(-\mathfrak c_2\frac{n\mathfrak m_2(n,\eta)}{\mathfrak m_3}\right)
 +\frac{\mathfrak m_1(n,\eta)^2}{n^2}\exp\left(-\mathfrak c_3\frac{n\mathfrak m_2(n,\eta)^{1/2}}{\mathfrak m_1(n,\eta)}\right)\right)\\
 & &
 \hspace{0.75cm} =
 \mathfrak c_1\left[\frac{\mathfrak m_{f,K}}{n\sqrt{\eta}}
 \exp\left(-\frac{\mathfrak c_2\mathfrak c_{K,Y}}{\mathfrak m_{f,K}\sqrt{\eta}}\right)
 +\frac{1}{n^2\eta}\mathfrak m(n)^{2}\|K\|_{2}^{2}
 \exp\left(-\sqrt n\frac{\mathfrak c_3\mathbb E(Y_{1}^{2})^{1/2}}{\mathfrak m(n)}\right)\right].
\end{eqnarray*}
\end{small}
\newline
By taking $\mathfrak m(n) :=\mathfrak c_3\mathbb E(Y_{1}^{2})^{1/2}n^{1/2}/\log(n)^{1/2}$,
\begin{eqnarray*}
 & &
 \mathbb E\left(\left(\sup_{\psi\in\mathcal C}\mathbf V_{n,\eta}(\psi)^2 - 4\mathfrak m_2(n,\eta)\right)_+\right)\\
 & &
 \hspace{2cm}\leqslant
 \frac{\mathfrak c_1}{n}\left[\frac{\mathfrak m_{f,K}}{\sqrt{\eta}}
 \exp\left(-\frac{\mathfrak c_2\mathfrak c_{K,Y}}{\mathfrak m_{f,K}\sqrt{\eta}}\right)\right.
 \left.
 +\frac{\mathfrak c_{3}^{2}\mathbb E(Y_{1}^{2})\|K\|_{2}^{2}}{\eta n\log(n)}\right].
\end{eqnarray*}
By the conditional Markov inequality,
\begin{eqnarray*}
 \mathbb E(\mathbf W_{n,\eta}) & \leqslant &
 \int_{-\infty}^{\infty}\mathbb E(Y_{1}^{2}\mathbf 1_{|Y_1| >\mathfrak m(n)}K_{\eta}(X_1 - z)^2)dz\\
 & = &
 \frac{\|K\|_{2}^{2}}{\eta}\cdot
 \mathbb E(Y_{1}^{2}\mathbb E(\mathbf 1_{|Y_1| >\mathfrak m(n)}|Y_1))\\
 & \leqslant &
 \frac{\|K\|_{2}^{2}}{\eta\mathfrak m(n)^4}\mathbb E(Y_{1}^{6}) =
 \mathfrak c_{3}^{-4}\mathbb E(Y_{1}^{2})^{-2}\mathbb E(Y_{1}^{6})
 \|K\|_{2}^{2}\frac{\log(n)^2}{n^2\eta}.
\end{eqnarray*}
Finally, for $\upsilon\geqslant 48$,
\begin{displaymath}
\frac{V_n(\eta)}{12}
\geqslant 4\mathfrak m_2(n,\eta).
\end{displaymath}
Then, since
\begin{displaymath}
\frac{1}{n}
\sum_{\eta\in\mathcal H_n}\frac{1}{\eta}\leqslant\mathfrak m,
\quad\textrm{and}\quad
\sum_{\eta\in\mathcal H_n}
\frac{1}{\sqrt{\eta}}
\exp\left(-\frac{c}{\sqrt{\eta}}\right)
\leqslant\mathfrak m(c)
\textrm{ $;$ }
\forall c > 0,
\end{displaymath}
there exists a constant $\mathfrak c_4 > 0$, not depending on $n$, such that
\begin{eqnarray}
 & &
 \mathbb E\left[
 \sup_{\eta\in\mathcal H_n}
 \left(\|\widehat{bf}_{n,\eta} -
 (bf)_{\eta}\|_{2}^{2} -\frac{V_n(\eta)}{6}\right)_+
 \right]
 \nonumber\\
 & &
 \hspace{1cm}\leqslant
 2\sum_{\eta\in\mathcal H_n}
 \mathbb E\left[\left(\sup_{\psi\in\mathcal C}
 \mathbf V_{n,\eta}(\psi)^2 - 4\mathfrak m_2(n,\eta)
 \right)_+\right]
 +2\sum_{\eta\in\mathcal H_n}\mathbb E(\mathbf W_{n,\eta})
 \nonumber\\
 \label{bound_GL_3}
 & &
 \hspace{1cm}\leqslant
 \mathfrak c_4\frac{\log(n)^2}{n}
\end{eqnarray}
\newline
The same ideas give that there exists a constant $\mathfrak c_5 > 0$, not depending on $n$ and $h$, such that
\begin{equation}\label{bound_GL_4}
\mathbb E\left[
\sup_{\eta\in\mathcal H_n}
\left(\|\widehat{bf}_{n,h,\eta} -
(bf)_{h,\eta}\|_{2}^{2} -\frac{V_n(\eta)}{6}\right)_+
\right]
\leqslant
\mathfrak c_5
\frac{\log(n)^2}{n}
\end{equation}
Therefore, by Inequalities (\ref{bound_GL_1})--(\ref{bound_GL_4}), there exist two deterministic constants $\mathfrak c,\overline{\mathfrak c} > 0$, not depending on $n$, such that
\begin{displaymath}
\mathbb E(\|\widehat{bf}_{n,\widehat h_n} - bf\|_{2}^{2})
\leqslant
\mathfrak c\cdot\inf_{h\in\mathcal H_n}\{\|(bf)_h - bf\|_{2}^{2} + V_n(h)\} +\overline{\mathfrak c}\frac{\log(n)^2}{n}.
\end{displaymath}
%


%
\subsection{Proof of Corollary \ref{bound_2bNW_GL}}\label{proof_bound_2bNW_GL}
As established in the proof of Proposition \ref{bound_MISE_2bNW},
\begin{eqnarray*}
 & &
 \|\widehat b_{n,\widehat h_n,\widehat h_n'} - b\|_{2,f,\mathcal S_n}^{2}\\
 & &
 \hspace{1cm}\leqslant
 \frac{8\mathfrak c_1}{m_{n}^{2}}\left(\|\widehat{bf}_{n,\widehat h_n} - bf\|_{2}^{2}
 +\mathfrak c_{b,f}\int_{-\infty}^{\infty}|\widehat f_{n,\widehat h_n'}(x) - f(x)|^2dx
 \right)\\
 & &
 \hspace{2cm}
 + 2\mathfrak c_{b,f}\int_{\mathcal S_n}
 \mathbf 1_{|\widehat f_{n,\widehat h_n'}(x) - f(x)| > m_n/2}dx
\end{eqnarray*}
with $\mathfrak c_1 :=\|f\|_{\infty}\vee\|f\|_{\infty}^{2}$. By Markov's inequality,
\begin{eqnarray*}
 \mathbb E(\|\widehat b_{n,\widehat h_n,\widehat h_n'} - b\|_{2,f,\mathcal S_n}^{2})
 & \leqslant &
 \frac{8\mathfrak c_1}{m_{n}^{2}}(\mathbb E(\|\widehat{bf}_{n,\widehat h_n} - bf\|_{2}^{2}) +
 \mathfrak c_{b,f}
 \mathbb E(\|\widehat f_{n,\widehat h_n'} - f\|_{2}^{2}))\\
 & &
 + 2\mathfrak c_{b,f}\int_{\mathcal S_n}
 \mathbb P\left(|\widehat f_{n,\widehat h_n'}(x) - f(x)| >\frac{m_n}{2}\right)dx\\
 & \leqslant &
 \frac{8(\mathfrak c_1\vee 1)}{m_{n}^{2}}(\mathbb E(\|\widehat{bf}_{n,\widehat h_n} - bf\|_{2}^{2})\\
 & &
 \hspace{3cm} +
 2\mathfrak c_{b,f}\mathbb E(\|\widehat f_{n,\widehat h_n'} - f\|_{2}^{2})).
\end{eqnarray*}
Theorem \ref{bound_GL} and Inequality (\ref{GL_density}) allow to conclude.
%


%
\subsection{Proof of Theorem \ref{bound_LMR}}
The proof relies on three lemmas, which are stated first.
%


%
\begin{lemma}\label{bound_U}
Consider the $U$-statistic
\begin{displaymath}
U_n(h,h_{\min}) :=
\sum_{k\not= l}
\langle Y_kK_h(X_k -\cdot) - (bf)_h,Y_lK_{h_{\min}}(X_l -\cdot) - (bf)_{h_{\min}}\rangle_2.
\end{displaymath}
Under Assumption \ref{assumption_K_f_numerator}, if there exists $\alpha > 0$ such that $\mathbb E(\exp(\alpha|Y_1|)) <\infty$, then there exists a deterministic constant $\mathfrak c_U > 0$, not depending on $n$ and $h_{\min}$, such that for every $\vartheta\in (0,1)$,
\begin{displaymath}
\mathbb E\left(\sup_{h\in\mathcal H_n}\left\{\frac{|U_n(h,h_{\min})|}{n^2}
-\frac{\vartheta\|K\|_{2}^{2}}{nh}\mathbb E(Y_{1}^{2})\right\}\right)
\leqslant
\mathfrak c_U\frac{\log(n)^5}{\vartheta n}.
\end{displaymath}
\end{lemma}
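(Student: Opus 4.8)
The plan is to view $U_n(h,h_{\min})$ as a completely degenerate $U$-statistic of order two and to apply to it the Houdré--Reynaud-Bouret deviation inequality \cite{HRB03}, along the lines of Lacour \emph{et al.}~\cite{LMR17}. The only genuinely new difficulty, compared with the density case, is that the kernel of this $U$-statistic is unbounded, which forces a preliminary truncation of the $Y_i$'s, made possible by the hypothesis $\mathbb E(\exp(\alpha|Y_1|)) < \infty$. Set $Z_k := (X_k,Y_k)$ and $\varphi_h(x,y) := yK_h(x-\cdot) - (bf)_h$, so that, by Lemma \ref{preliminaries_GL}, $\mathbb E(\varphi_h(Z_1)) = 0$ and, after symmetrization in $(k,l)$, $U_n(h,h_{\min}) = \sum_{1\leqslant k<l\leqslant n}G_h(Z_k,Z_l)$ with $G_h(z,z') := \langle\varphi_h(z),\varphi_{h_{\min}}(z')\rangle_2 + \langle\varphi_h(z'),\varphi_{h_{\min}}(z)\rangle_2$; since $\mathbb E(G_h(Z_1,z)) = 0$ for every $z$, this $U$-statistic is completely degenerate. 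Next, fix $\tau_n := \mathfrak q\log(n)$ with $\mathfrak q > 0$ large, put $\overline\varphi_h(x,y) := y\mathbf 1_{|y|\leqslant\tau_n}K_h(x-\cdot) - \mathbb E(Y_1\mathbf 1_{|Y_1|\leqslant\tau_n}K_h(X_1-\cdot))$ and let $\overline U_n(h,h_{\min})$, $\overline G_h$ be the corresponding truncated objects; then $U_n - \overline U_n$ is a sum of three $U$-statistics whose terms each carry at least one centered factor built on $Y_k\mathbf 1_{|Y_k|>\tau_n}$, and bounding their second moments crudely by $n^4$ times $\mathbb E(Y_1^2\mathbf 1_{|Y_1|>\tau_n})\leqslant e^{-(\alpha/2)\tau_n}\mathbb E(Y_1^2e^{(\alpha/2)|Y_1|})$ times factors polynomial in $n$ and $\tau_n$, and taking $\mathfrak q$ large, gives $\mathbb E(\sup_{h\in\mathcal H_n}|U_n(h,h_{\min})-\overline U_n(h,h_{\min})|)/n^2\lesssim n^{-1}$. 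It thus suffices to bound $\mathbb E(\sup_{h\in\mathcal H_n}\{|\overline U_n(h,h_{\min})|/n^2-\vartheta\|K\|_2^2\mathbb E(Y_1^2)/(nh)\}_+)$.

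To do so, I would apply \cite{HRB03} to $\overline U_n(h,h_{\min})$: for every $u > 0$, $\mathbb P(|\overline U_n(h,h_{\min})|\geqslant\kappa(\mathbb C\sqrt u+\mathbb D u+\mathbb B u^{3/2}+\mathbb A u^2))\leqslant\kappa'e^{-u}$, and estimate the four quantities via the covariance operator $\Gamma_h$ of $\varphi_h(Z_1)$, whose kernel is $\Gamma_h(s,t)=\int(b(x)^2+\sigma^2)f(x)K_h(x-s)K_h(x-t)\,dx-(bf)_h(s)(bf)_h(t)$. Two facts drive everything: by Young's inequality and Assumption \ref{assumption_b_f} (together with $f$ bounded), $\|\Gamma_h\|_{\mathrm{op}}\leqslant\|K\|_1^2\|(b^2+\sigma^2)f\|_\infty + c =: \mathfrak k_\Gamma$, a constant independent of $h$; and, by Plancherel with $|\widehat K|\leqslant\|K\|_1$ and since $h\geqslant h_{\min}$, $\|K_h\ast K_{h_{\min}}\|_2^2=(2\pi)^{-1}\int|\widehat K(h\xi)|^2|\widehat K(h_{\min}\xi)|^2\,d\xi\leqslant\|K\|_1^2\|K\|_2^2/h$. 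Consequently:
\begin{itemize}
\item $\mathbb C^2=\sum_{k<l}\mathbb E(\overline G_h(Z_k,Z_l)^2)\lesssim n^2\operatorname{Tr}(\Gamma_h\Gamma_{h_{\min}})\lesssim n^2\|(b^2+\sigma^2)f\|_\infty\|K\|_1^2\mathbb E(Y_1^2)\|K\|_2^2/h$, the $1/h$ dependence (\emph{not} $1/h_{\min}$) being the crucial point;
\item $\mathbb A=\|\overline G_h\|_\infty\lesssim\|\overline\varphi_h(\cdot)\|_2\|\overline\varphi_{h_{\min}}(\cdot)\|_2\lesssim\tau_n^2/\sqrt{hh_{\min}}\leqslant\tau_n^2n$, using $1/h_{\min}\leqslant n$;
\item $\mathbb B^2\lesssim n\sup_z\mathbb E(\overline G_h(Z_1,z)^2)\lesssim n\,\mathfrak k_\Gamma\tau_n^2(1/h+1/h_{\min})\lesssim\tau_n^2n^2$;
\item $\mathbb D\lesssim n\sqrt{\|\Gamma_h\|_{\mathrm{op}}\|\Gamma_{h_{\min}}\|_{\mathrm{op}}}\lesssim n\,\mathfrak k_\Gamma$.
\end{itemize}

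Dividing by $n^2$ and writing $D:=\|K\|_2^2\mathbb E(Y_1^2)$, only the $\mathbb C$-term needs to be absorbed: since $\mathbb C\sqrt u/n^2\lesssim((D/(nh))\cdot(\mathfrak k\,u/n))^{1/2}$ with $\mathfrak k:=\|(b^2+\sigma^2)f\|_\infty\|K\|_1^2$, the inequality $ab\leqslant\frac{\vartheta}{4}a^2+\vartheta^{-1}b^2$ turns it into $\frac{\vartheta}{4}D/(nh)$ plus a residual $\lesssim\mathfrak k u/(\vartheta n)$, while the three remaining terms $\mathbb D u/n^2$, $\mathbb B u^{3/2}/n^2$, $\mathbb A u^2/n^2$ are already, by $1/h_{\min}\leqslant n$, at most $u/n$, $\tau_n u^{3/2}/n$, $\tau_n^2u^2/n$. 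Summing the resulting tail over the at most $n$ bandwidths of $\mathcal H_n$ and integrating in $u$ then yields $\mathbb E(\sup_h\{|\overline U_n(h,h_{\min})|/n^2-\vartheta D/(nh)\}_+)\lesssim\tau_n^2(\log n)^2/(\vartheta n)$, which with $\tau_n\asymp\log n$ is $\leqslant\mathfrak c_U(\log n)^5/(\vartheta n)$; combined with the truncation bound, this proves the lemma. The main obstacle is precisely the careful computation of these four Talagrand-type quantities for the $Y$-weighted kernel — in particular the bound $\|K_h\ast K_{h_{\min}}\|_2^2\lesssim 1/h$, which makes $\mathbb C^2\lesssim n^2/h$ and hence the absorption into the variance proxy $\vartheta D/(nh)$ possible, and the uniform bound $\|\Gamma_h\|_{\mathrm{op}}\lesssim 1$, which keeps $\mathbb B$ and $\mathbb D$ negligible — together with making the truncation error and the bookkeeping of the logarithmic factors rigorous.
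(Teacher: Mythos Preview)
Your overall architecture is exactly that of the paper: truncate the $Y_i$'s at a level $\tau_n\asymp\log n$, apply the Houdr\'e--Reynaud-Bouret inequality to the truncated, degenerate $U$-statistic, bound the four constants $\mathbb A,\mathbb B,\mathbb C,\mathbb D$, absorb the $h$-dependent part of the $\mathbb C$-contribution into the variance proxy $\vartheta\|K\|_2^2\mathbb E(Y_1^2)/(nh)$ via Young's inequality, take a union bound over $|\mathcal H_n|\leqslant n$ and integrate the tail, and finally control the truncation remainder using $\mathbb P(|Y_1|>\tau_n)\leqslant e^{-\alpha\tau_n}\mathbb E(e^{\alpha|Y_1|})$. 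That is precisely what the paper does.

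There is, however, one genuine gap. Your bound $\|\Gamma_h\|_{\mathrm{op}}\leqslant\|K\|_1^2\|(b^2+\sigma^2)f\|_\infty+c$ invokes Assumption~\ref{assumption_b_f} (i.e.\ $b^2f$ bounded), which is \emph{not} among the hypotheses of Lemma~\ref{bound_U}; the lemma only assumes Assumption~\ref{assumption_K_f_numerator} and the exponential moment. Since your estimates of $\mathbb C$, $\mathbb B$ and $\mathbb D$ all rest on this operator-norm bound, the argument as written does not prove the lemma as stated. The fix is immediate: work throughout with the \emph{truncated} covariance operator $\overline\Gamma_h$, for which
\[
\langle\overline\Gamma_h\psi,\psi\rangle
=\mathrm{var}\bigl(Y_1\mathbf 1_{|Y_1|\leqslant\tau_n}(K_h\ast\psi)(X_1)\bigr)
\leqslant\tau_n^2\|f\|_\infty\|K\|_1^2\|\psi\|_2^2,
\]
so that $\|\overline\Gamma_h\|_{\mathrm{op}}\leqslant\tau_n^2\|f\|_\infty\|K\|_1^2$ without any assumption on $b^2f$. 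This costs extra factors of $\tau_n^2\asymp\log(n)^2$ in the residuals, but these are harmlessly absorbed in the final $\log(n)^5$.

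The paper circumvents this issue differently: it never bounds the operator norm at all. Instead, for $\mathfrak c_n$ (your $\mathbb C$) it first pulls out one factor $\mathfrak m(n)^2$ from the truncation $|Y_k|\leqslant\mathfrak m(n)$ and then uses independence of $X_k$ and $X_l$ to reduce to $\|f\|_\infty\mathbb E(b(X_1)^2)\|K_h\ast K_{h_{\min}}\|_2^2$, which needs only $\mathbb E(Y_1^2)<\infty$. For $\mathfrak b_n$ and $\mathfrak d_n$ (your $\mathbb B,\mathbb D$) the paper obtains bounds carrying a factor $1/\sqrt{h}$ or $1/\sqrt{hh_{\min}}$ and then, unlike you, absorbs these too into the variance proxy by a second application of Young's inequality. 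Your covariance-operator bookkeeping is more conceptual and, once the truncated operator norm is used, equally valid; the paper's hands-on splitting is slightly more elementary and matches the stated hypotheses exactly.
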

%


%
\begin{lemma}\label{bound_V}
For every $\eta,\eta'\in\mathcal H_n$, consider
\begin{displaymath}
V_n(\eta,\eta') :=
\langle\widehat{bf}_{n,\eta} - (bf)_{\eta'},
(bf)_{\eta'} - bf\rangle_2.
\end{displaymath}
Under Assumption \ref{assumption_K_f_numerator}, if there exists $\alpha > 0$ such that $\mathbb E(\exp(\alpha|Y_1|)) <\infty$ and $bf$ is bounded, then there exists a deterministic constant $\mathfrak c_V > 0$, not depending on $n$ and $h_{\min}$, such that for every $\vartheta\in (0,1)$,
\begin{displaymath}
\mathbb E\left(\sup_{\eta,\eta'\in\mathcal H_n}\{
|V_n(\eta,\eta')| -\vartheta\|(bf)_{\eta'} - bf\|_{2}^{2}\}\right)
\leqslant
\mathfrak c_V\frac{\log(n)^3}{\vartheta n}.
\end{displaymath}
\end{lemma}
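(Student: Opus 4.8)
The plan is to regard $V_n(\eta,\eta')$ as a centered empirical linear functional tested against the fixed, smooth function $g_{\eta'}:=(bf)_{\eta'}-bf$, to reduce to a \emph{bounded} empirical process by truncating the $Y_i$'s, and then to control the supremum over the grid by Bernstein's and Talagrand's inequalities, the subtracted term $\vartheta\|g_{\eta'}\|_2^2$ serving as a localizing radius. First, since $\mathbb E(\widehat{bf}_{n,\eta})=(bf)_\eta$ (Lemma~\ref{preliminaries_GL}), write $\widehat{bf}_{n,\eta}-(bf)_{\eta'}=(\widehat{bf}_{n,\eta}-(bf)_\eta)+((bf)_\eta-(bf)_{\eta'})$; the second, deterministic piece of $V_n(\eta,\eta')$ is handled by elementary inequalities (Cauchy--Schwarz together with $(bf)_\eta-(bf)_{\eta'}=((bf)_\eta-bf)-g_{\eta'}$), and the whole difficulty lies in the centered piece, which by Lemma~\ref{preliminaries_GL} equals, with $\Phi_{\eta,\eta'}:=K_\eta\ast g_{\eta'}$,
\[
\langle\widehat{bf}_{n,\eta}-(bf)_\eta,\,g_{\eta'}\rangle_2
=\frac1n\sum_{k=1}^n\bigl(Y_k\Phi_{\eta,\eta'}(X_k)-\mathbb E[Y_1\Phi_{\eta,\eta'}(X_1)]\bigr).
\]
Because only $\mathbb E(e^{\alpha|Y_1|})<\infty$ is available, I would split $Y_k=Y_k\mathbf 1_{\{|Y_k|\le\kappa_n\}}+Y_k\mathbf 1_{\{|Y_k|>\kappa_n\}}$ with $\kappa_n$ a large enough multiple of $\log n$ so that $\mathbb P(|Y_1|>\kappa_n)$ is smaller than any prescribed power of $1/n$; the tail part is bounded crudely, using $\|\Phi_{\eta,\eta'}\|_\infty\le\|K_\eta\|_2\|g_{\eta'}\|_2=\|K\|_2\eta^{-1/2}\|g_{\eta'}\|_2$ together with $1/(nh_{\min})\le 1$, and is seen to be negligible even after summation over the grid.

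The analytic core is a pair of moment estimates for the truncated summand $\phi_{\eta,\eta'}(X_k,Y_k):=Y_k\mathbf 1_{\{|Y_k|\le\kappa_n\}}\Phi_{\eta,\eta'}(X_k)$: the sup-norm bound $\|\phi_{\eta,\eta'}\|_\infty\le\kappa_n\|K\|_2\eta^{-1/2}\|g_{\eta'}\|_2$, and -- crucially -- the variance bound, \emph{uniform in $\eta$},
\[
\operatorname{Var}\bigl(\phi_{\eta,\eta'}(X_1,Y_1)\bigr)\le\kappa_n\,\mathbb E\!\left(|Y_1|\,\Phi_{\eta,\eta'}(X_1)^2\right)\le\kappa_n\bigl(\|bf\|_\infty+\sigma\|f\|_\infty\bigr)\|K\|_1^2\,\|g_{\eta'}\|_2^2 .
\]
Here one uses $(Y_k\mathbf 1_{\{|Y_k|\le\kappa_n\}})^2\le\kappa_n|Y_k|$, then $\mathbb E(|Y_1|\mid X_1=x)\le|b(x)|+\sigma$, the fact that $|b|f=|bf|$ is bounded (this is precisely why only $bf$ bounded is needed here rather than $b^2f$), and Young's $\mathbb L^1$-inequality $\|K_\eta\ast g_{\eta'}\|_2\le\|K\|_1\|g_{\eta'}\|_2$. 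Thus, for each fixed $\eta'$, $V_n(\eta,\eta')$ has variance of order $\log(n)\,\|g_{\eta'}\|_2^2/n$, with no negative power of the bandwidth, while $\|g_{\eta'}\|_2$ itself is bounded uniformly (by $(\|K\|_1+1)\|bf\|_2$).

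Finally, for fixed $\eta'$ I would estimate $\mathbb E\bigl\{\sup_{\eta\in\mathcal H_n}|V_n(\eta,\eta')|-\vartheta\|g_{\eta'}\|_2^2\bigr\}_+$ by a Bennett/Talagrand inequality for the supremum of the bounded empirical process over the $N(n)\le n$ bandwidths $\eta$ (see Klein and Rio~\cite{KR05}), which only costs powers of $\log n$, and then sum over the values of $\eta'$. The localizing term $\vartheta\|g_{\eta'}\|_2^2$ is what makes the bound summable: for bandwidths $\eta'$ with $\|g_{\eta'}\|_2^2$ not too small, $\vartheta\|g_{\eta'}\|_2^2$ dominates the mean of $\sup_\eta|V_n(\eta,\eta')|$, and the concentration inequality shows the positive part is smaller than any prescribed power of $1/n$; for the remaining ``small-bias'' bandwidths the positive part is bounded directly by $\mathbb E\sup_\eta|V_n(\eta,\eta')|\lesssim(\log n)^{O(1)}n^{-1/2}\|g_{\eta'}\|_2$, which is small precisely because $\|g_{\eta'}\|_2$ is. I expect this last step -- organizing the peeling over the size of $\|g_{\eta'}\|_2^2$ so that summing the per-$\eta'$ estimates over the grid yields $(\log n)^3/n$ rather than merely $(\log n)^3/\sqrt n$ -- to be the main obstacle; it is the place where the uniform variance bound, the localizing radius $\vartheta\|g_{\eta'}\|_2^2$ and the finiteness of $\mathcal H_n$ must all be exploited at once, in the spirit of Lacour, Massart and Rivoirard~\cite{LMR17}.
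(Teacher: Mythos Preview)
Your truncation step and the variance estimate (uniform in $\eta$, proportional to $\|g_{\eta'}\|_2^2$) are right and match the paper's. The gap is in how you treat the double supremum. Your sup-norm bound $\|\phi_{\eta,\eta'}\|_\infty\le\kappa_n\|K\|_2\eta^{-1/2}\|g_{\eta'}\|_2$ is weaker than necessary: the paper bounds $|\langle yK_\eta(x-\cdot),g_{\eta'}\rangle_2|\le|y|\,\|K_\eta\|_1\|g_{\eta'}\|_\infty\le 2\mathfrak m(n)\|K\|_1^2\|bf\|_\infty$, which is \emph{uniform in both $\eta$ and $\eta'$} --- this is precisely where boundedness of $bf$ is exploited. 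More seriously, your plan ``Talagrand over $\eta$ for each fixed $\eta'$, then sum over $\eta'$'' costs a multiplicative $|\mathcal H_n|$ you cannot afford: in your small-bias regime each $\eta'$ contributes an expectation of the positive part of order $(\log n)^{O(1)}/n$, and summing over up to $n$ values of $\eta'$ yields only $(\log n)^{O(1)}$, not $(\log n)^3/n$. The peeling you sketch does not remove this factor; the obstacle you flag is real for your route.

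The paper sidesteps all of this by applying Bernstein to each \emph{fixed pair} $(\eta,\eta')$: with the uniform sup-norm bound above and the variance bound $\mathfrak v_{\eta,\eta'}\le\mathfrak m(n)^2\|f\|_\infty\|K\|_1^2\|g_{\eta'}\|_2^2$, one gets, with probability at least $1-2e^{-\lambda}$,
\[
|V_{1,n}(\eta,\eta')|\le\sqrt{\tfrac{2\lambda}{n}\mathfrak v_{\eta,\eta'}}+\tfrac{\lambda}{n}\mathfrak c_{\eta,\eta'}
\le\vartheta\|g_{\eta'}\|_2^2+\mathfrak c_1\frac{\mathfrak m(n)^2}{\vartheta n}\|K\|_1^2(\|f\|_\infty+\|bf\|_\infty)\lambda,
\]
the remainder being \emph{independent of $(\eta,\eta')$}. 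A union bound over the $|\mathcal H_n|^2\le n^2$ pairs, followed by integrating the tail in $\lambda$, gives $\mathbb E\bigl(\sup_{\eta,\eta'}\{\cdot\}\bigr)\le C\log(n)^3/(\vartheta n)$ directly --- no Talagrand, no peeling, no case distinction on the size of $\|g_{\eta'}\|_2$. (A side remark: in the paper's proof $V_n$ is treated as the centered quantity $\langle\widehat{bf}_{n,\eta}-(bf)_\eta,\,g_{\eta'}\rangle_2$, consistently with its use in Step~1 of Theorem~\ref{bound_LMR}; your separate handling of a deterministic piece $\langle(bf)_\eta-(bf)_{\eta'},g_{\eta'}\rangle_2$ is therefore unnecessary, and in any case Cauchy--Schwarz alone would not make that piece $O(\log(n)^3/n)$.)
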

%


%
\begin{lemma}\label{Lerasle_type_inequality}
Under Assumption \ref{assumption_K_f_numerator}, if $bf$ is bounded and if there exists $\alpha > 0$ such that $\mathbb E(\exp(\alpha|Y_1|)) <\infty$, then there exists a deterministic constant $\mathfrak c_L > 0$, not depending on $n$ and $h_{\min}$, such that for every $\vartheta\in (0,1)$,
\begin{displaymath}
\mathbb E\left(
\sup_{h\in\mathcal H_n}\left\{
\|(bf)_h - bf\|_{2}^{2} +\frac{\mathfrak c_{K,Y}}{nh} -\frac{1}{1 -\vartheta}
\|\widehat{bf}_{n,h} - bf\|_{2}^{2}\right\}\right)
\leqslant
\frac{\mathfrak c_L}{\vartheta(1 -\vartheta)}
\cdot\frac{\log(n)^5}{n}.
\end{displaymath}
\end{lemma}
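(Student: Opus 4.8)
The plan is to follow the scheme used by Lacour, Massart and Rivoirard \cite{LMR17} for the density estimator, namely to exhibit the deterministic proxy $\|(bf)_h - bf\|_{2}^{2} + \mathfrak c_{K,Y}/(nh)$ inside the random squared error. Expanding the square and using $\mathbb E(\widehat{bf}_{n,h}) = (bf)_h$ (Lemma \ref{preliminaries_GL}), I would start from
\begin{displaymath}
\|\widehat{bf}_{n,h} - bf\|_{2}^{2}
= \|(bf)_h - bf\|_{2}^{2}
+ \|\widehat{bf}_{n,h} - (bf)_h\|_{2}^{2}
+ 2V_n(h,h),
\end{displaymath}
where $V_n(h,h) = \langle\widehat{bf}_{n,h} - (bf)_h,(bf)_h - bf\rangle_2$ is the quantity of Lemma \ref{bound_V}. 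Setting $Z_k := Y_kK_h(X_k - \cdot) - (bf)_h$, so that $\widehat{bf}_{n,h} - (bf)_h = n^{-1}\sum_{k=1}^{n}Z_k$, I would split the centered quadratic term into its diagonal part $D_n(h) := n^{-2}\sum_{k=1}^{n}\|Z_k\|_{2}^{2}$ and the off-diagonal $U$-statistic $U_n(h,h)$ (that is, the $U_n(h,h_{\min})$ of Lemma \ref{bound_U} with $h_{\min}$ replaced by $h$), i.e. $\|\widehat{bf}_{n,h} - (bf)_h\|_{2}^{2} = D_n(h) + U_n(h,h)/n^2$. A short computation using $\int K_h(X_1 - x)^2\,dx = \|K\|_{2}^{2}/h$ gives $\mathbb E(D_n(h)) = n^{-1}\int_{-\infty}^{\infty}\textrm{var}(Y_1K_h(X_1 - x))\,dx = \mathfrak c_{K,Y}/(nh) - \|(bf)_h\|_{2}^{2}/n$; moreover, since $bf$ is bounded while $\mathbb E(\exp(\alpha|Y_1|)) < \infty$ forces $\mathbb E|b(X_1)|\leqslant(\mathbb E(Y_{1}^{2}) - \sigma^2)^{1/2} < \infty$, one has $\|(bf)_h\|_{2}^{2}\leqslant\|(bf)_h\|_{\infty}\|(bf)_h\|_{1}\leqslant\|K\|_{1}^{2}\|bf\|_{\infty}\mathbb E|b(X_1)| =: C_0$, so that $0\leqslant\mathfrak c_{K,Y}/(nh) - \mathbb E(D_n(h))\leqslant C_0/n$ uniformly over $h\in\mathcal H_n$.

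Writing $T_n(h)$ for the expression inside the supremum of the statement and substituting the two identities above, using $1 - (1-\vartheta)^{-1} = -\vartheta/(1-\vartheta)$ together with $\mathfrak c_{K,Y}/(nh) - (1-\vartheta)^{-1}\mathbb E(D_n(h))\leqslant -\tfrac{\vartheta}{1-\vartheta}\mathfrak c_{K,Y}/(nh) + C_0/((1-\vartheta)n)$, I arrive at
\begin{eqnarray*}
T_n(h) & \leqslant &
-\frac{\vartheta}{1-\vartheta}\Bigl(\|(bf)_h - bf\|_{2}^{2} + \frac{\mathfrak c_{K,Y}}{nh}\Bigr)
+ \frac{\mathbb E(D_n(h)) - D_n(h)}{1-\vartheta}\\
& & {} + \frac{|U_n(h,h)|}{(1-\vartheta)n^2}
+ \frac{2|V_n(h,h)|}{1-\vartheta}
+ \frac{C_0}{(1-\vartheta)n}.
\end{eqnarray*}
The remaining task is to absorb the three random terms into the two negative ones, up to a residual of order $\vartheta^{-1}(1-\vartheta)^{-1}\log(n)^5/n$ in expectation, by splitting $\vartheta$ into fixed fractions. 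Lemma \ref{bound_V} applied with $\vartheta/4$ in place of $\vartheta$ (restricted to $\eta = \eta' = h$) makes $2(1-\vartheta)^{-1}|V_n(h,h)|$ cost at most $\tfrac{\vartheta}{2(1-\vartheta)}\|(bf)_h - bf\|_{2}^{2}$ plus an $O(\log(n)^3/(\vartheta n))$ term, the former being dominated by $-\tfrac{\vartheta}{1-\vartheta}\|(bf)_h - bf\|_{2}^{2}$; and the deviation inequality behind Lemma \ref{bound_U}, applied with $\vartheta/3$ in place of $\vartheta$ and with both bandwidths equal to $h$ (recall $\|K\|_{2}^{2}\mathbb E(Y_{1}^{2}) = \mathfrak c_{K,Y}$), makes $(1-\vartheta)^{-1}|U_n(h,h)|/n^2$ cost at most $\tfrac{\vartheta}{3(1-\vartheta)}\mathfrak c_{K,Y}/(nh)$ plus an $O(\log(n)^5/(\vartheta n))$ term, dominated by $-\tfrac{\vartheta}{1-\vartheta}\mathfrak c_{K,Y}/(nh)$.

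There then remains a negative budget $-\tfrac{2\vartheta}{3(1-\vartheta)}\mathfrak c_{K,Y}/(nh)$ to offset $(1-\vartheta)^{-1}(\mathbb E(D_n(h)) - D_n(h))$. On the event $\{D_n(h)\geqslant(1 - \tfrac{2\vartheta}{3})\mathbb E(D_n(h))\}$ one has $\mathbb E(D_n(h)) - D_n(h)\leqslant\tfrac{2\vartheta}{3}\mathbb E(D_n(h))\leqslant\tfrac{2\vartheta}{3}\mathfrak c_{K,Y}/(nh)$, so the budget suffices; off that event the term is at most $(1-\vartheta)^{-1}\mathbb E(D_n(h))\leqslant(1-\vartheta)^{-1}\mathfrak c_{K,Y}/(nh)$. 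Thus the diagonal contribution to $\mathbb E[\sup_{h}T_n(h)]$ is at most $(1-\vartheta)^{-1}\sum_{h\in\mathcal H_n}\mathfrak c_{K,Y}(nh)^{-1}\,\mathbb P(D_n(h) < (1 - \tfrac{2\vartheta}{3})\mathbb E(D_n(h)))$, and I would bound the lower-tail probability by Bernstein's inequality for the i.i.d. sum $n^{-1}\sum_k\|Z_k\|_{2}^{2}$ of nonnegative variables. The bound is uniform in $h$: conditioning on $X_1$ gives $\mathbb E\|Z_1\|_{2}^{2}\geqslant\sigma^2\|K\|_{2}^{2}/h$, whereas $\|Z_1\|_{2}^{2}\leqslant 2\|K\|_{2}^{2}Y_{1}^{2}/h + 2C_0$ yields $\textrm{var}(\|Z_1\|_{2}^{2})\leqslant C_1/h^2$ (using $\mathbb E(Y_{1}^{4}) < \infty$), so $(\mathbb E\|Z_1\|_{2}^{2})^2/\textrm{var}(\|Z_1\|_{2}^{2})\geqslant c_0$ for some $c_0 > 0$ not depending on $h$, whence $\mathbb P(D_n(h) < (1-\tfrac{2\vartheta}{3})\mathbb E(D_n(h)))\leqslant\exp(-c_0 n\vartheta^2)$ up to an absolute constant in the exponent. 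Since $|\mathcal H_n|\leqslant n$ and $nh\geqslant nh_{\min} > 1$, the diagonal contribution is at most $(1-\vartheta)^{-1}\mathfrak c_{K,Y}\,n\exp(-c_0 n\vartheta^2)$, which is negligible with respect to $\log(n)^5/n$; collecting everything yields the claimed bound.

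The hard part will be exactly this diagonal (``empirical variance'') term: in the density case of \cite{LMR17} the analogue of $\|Z_1\|_{2}^{2}$ is bounded by a deterministic quantity of order $1/h$ and concentrates trivially, whereas here it is genuinely random through $Y_{1}^{2}$, which is only sub-exponential --- this is why the assumption $\mathbb E(\exp(\alpha|Y_1|)) < \infty$ replaces boundedness. Care is needed to check that the Bernstein lower-tail exponent does not deteriorate as $h\to 0$ (it does not, because both $\mathbb E\|Z_1\|_{2}^{2}$ and the standard deviation of $\|Z_1\|_{2}^{2}$ are of exact order $1/h$), and to track the splitting of $\vartheta$ so that every $\|(bf)_h - bf\|_{2}^{2}$ and $\mathfrak c_{K,Y}/(nh)$ term produced by Lemmas \ref{bound_V} and \ref{bound_U} is absorbed by the two negative terms. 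A minor point: Lemma \ref{bound_U} is stated with $h_{\min}$ fixed, so one re-runs its proof --- the Houdr\'e--Reynaud-Bouret deviation inequality for $U$-statistics --- with both bandwidths set to the running $h$.
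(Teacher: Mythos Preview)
Your decomposition is essentially the paper's: both proofs expand $\|\widehat{bf}_{n,h}-bf\|_2^2$, isolate $V_n(h,h)$ and the off-diagonal $U$-statistic $U_n(h,h)/n^2$, invoke Lemmas \ref{bound_V} and \ref{bound_U} for these, and are left with the diagonal ``empirical variance'' term $D_n(h)-\mathbb E(D_n(h))$ (your notation; the paper writes it as $W_n(h)/n$). The difference is only in how this last term is controlled, and there your argument has a genuine gap.

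Your Chernoff-type lower-tail bound for the nonnegative i.i.d.\ sum $D_n(h)=n^{-2}\sum_k\|Z_k\|_2^2$ is correct: since $(\mathbb E\|Z_1\|_2^2)^2/\mathbb E\|Z_1\|_2^4\geqslant c_0>0$ uniformly in $h$, one indeed gets $\mathbb P\bigl(D_n(h)<(1-\tfrac{2\vartheta}{3})\mathbb E D_n(h)\bigr)\leqslant\exp(-c_0'n\vartheta^2)$. But the resulting diagonal contribution is at most $(1-\vartheta)^{-1}\mathfrak c_{K,Y}\sum_{h\in\mathcal H_n}(nh)^{-1}\exp(-c_0'n\vartheta^2)\leqslant (1-\vartheta)^{-1}\mathfrak c_{K,Y}\,n\,\exp(-c_0'n\vartheta^2)$ (using only $nh\geqslant 1$ and $|\mathcal H_n|\leqslant n$), and this is \emph{not} $O(\log(n)^5/(\vartheta(1-\vartheta)n))$ uniformly over $\vartheta\in(0,1)$: for $\vartheta$ of order $n^{-1/2}$ the exponential is of constant size and the bound is $O(n)$, whereas the target is $O(n^{-1/2}\log(n)^5)$. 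Your sentence ``which is negligible with respect to $\log(n)^5/n$'' is true for each fixed $\vartheta$ but fails exactly where the lemma requires uniformity.

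The paper fixes this by treating $W_n(h)/n=D_n(h)-\mathbb E D_n(h)$ additively rather than multiplicatively: truncate $|Y_k|$ at $\mathfrak m(n)=2\log(n)/\alpha$, apply the usual (two-sided) Bernstein inequality to the bounded truncated summands, and use $\sqrt{2\lambda\mathfrak v_h/n}\leqslant\theta\,\mathfrak c_{K,Y}/h+\lambda\mathfrak v_h/(2\theta n\,\mathfrak c_{K,Y}/h)$ together with $\mathfrak v_h\leqslant C\mathfrak m(n)^2\mathfrak c_{K,Y}/(h\,h_{\min})$ to obtain, with probability $\geqslant 1-2e^{-\lambda}$, $|W_{1,n}(h)|/n\leqslant\theta\,\mathfrak c_{K,Y}/(nh)+C\mathfrak m(n)^2\lambda/(\theta n^2h_{\min})$. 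Integrating in $\lambda$ and summing over $\mathcal H_n$ gives the residual $C\log(n)^3/(\theta n)$ with the correct $1/\theta$ scaling; the untruncated part $W_{2,n}$ is dispatched via the exponential-moment assumption. This is precisely the step where $\mathbb E(\exp(\alpha|Y_1|))<\infty$ is used --- not for the lower tail itself, but to make the truncation-plus-Bernstein scheme lossless. Your cleaner route via the lower tail of nonnegative sums would only recover the lemma for $\vartheta$ bounded away from $0$ by a power of $\log(n)/n$.
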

%


%
\subsubsection{Steps of the proof.} The proof of Theorem \ref{bound_LMR} is dissected in three steps.
\\
\\
\textbf{Step 1.} In this step, a suitable decomposition of
\begin{displaymath}
\|\widehat{bf}_{n,\widetilde h_n} - bf\|_{2}^{2}
\end{displaymath}
is provided. On the one hand,
\begin{eqnarray*}
 & &
 \|\widehat{bf}_{n,\widetilde h_n} - bf\|_{2}^{2} +\textrm{pen}(\widetilde h_n)\\
 & &
 \hspace{1cm} =
 \|\widehat{bf}_{n,\widetilde h_n} -\widehat{bf}_{n,h_{\min}}\|_{2}^{2} +\textrm{pen}(\widetilde h_n)\\
 & &
 \hspace{2cm}
 +\|\widehat{bf}_{n,h_{\min}} - bf\|_{2}^{2}
 - 2\langle\widehat{bf}_{n,h_{\min}} -\widehat{bf}_{n,\widetilde h_n},\widehat{bf}_{n,h_{\min}} - bf\rangle_2.
\end{eqnarray*}
Since
\begin{displaymath}
\widetilde h_n\in\arg\min_{h\in\mathcal H_n}\textrm{crit}(h)
\quad{\rm with}\quad
\textrm{crit}(h) =\|\widehat{bf}_{n,h} -\widehat{bf}_{n,h_{\min}}\|_{2}^{2} +\textrm{pen}(h),
\end{displaymath}
for any $h\in\mathcal H_n$,
\begin{equation}\label{bound_LMR_1}
\|\widehat{bf}_{n,\widetilde h_n} - bf\|_{2}^{2}
\leqslant
\|\widehat{bf}_{n,h} - bf\|_{2}^{2} +\textrm{pen}(h) - 2\psi_n(h) -
(\textrm{pen}(\widetilde h_n) - 2\psi_n(\widetilde h_n))
\end{equation}
with
\begin{displaymath}
\psi_n(h) :=
\langle\widehat{bf}_{n,h_{\min}} - bf,\widehat{bf}_{n,h} - bf\rangle_2.
\end{displaymath}
On the other hand,
\begin{displaymath}
\psi_n(h) =
\psi_{1,n}(h) +
\psi_{2,n}(h) +
\psi_{3,n}(h)
\end{displaymath}
where
\begin{eqnarray*}
 \psi_{1,n}(h) & := &
 \frac{\langle K_{h_{\min}},K_h\rangle_2}{n^2}\sum_{k = 1}^{n}Y_{k}^{2} +\frac{U_n(h,h_{\min})}{n^2},\\
 \psi_{2,n}(h) & := &
 -\frac{1}{n^2}\left(\sum_{k = 1}^{n}Y_k\langle K_{h_{\min}}(X_k -\cdot),(bf)_h\rangle_2\right.\\
 & &
 \left. +
 \sum_{k = 1}^{n}Y_k\langle K_h(X_k -\cdot),(bf)_{h_{\min}}\rangle_2\right) +\frac{1}{n}\langle (bf)_{h_{\min}},(bf)_h\rangle_2\textrm{ and}\\
 \psi_{3,n}(h) & := &
 V_n(h,h_{\min}) + V_n(h_{\min},h) +\langle (bf)_h - bf,(bf)_{h_{\min}} - bf\rangle_2.
\end{eqnarray*}
\textbf{Step 2.} In this step, let us provide some suitable controls of
\begin{displaymath}
\mathbb E(\psi_{i,n}(h))
\textrm{ and }
\mathbb E(\psi_{i,n}(\widetilde h_n))
\textrm{ $;$ }i = 1,2,3.
\end{displaymath}
\begin{enumerate}
 \item Consider
 \begin{displaymath}
 \widetilde\psi_{1,n}(h) :=
 \psi_{1,n}(h) -\frac{\langle K_{h_{\min}},K_h\rangle_2}{n^2}\sum_{k = 1}^{n}Y_{k}^{2} = \frac{U(h, h_{\min})}{n^2}.
 \end{displaymath}
 By Lemma \ref{bound_U},
 \begin{displaymath}
 \mathbb E(|\widetilde\psi_{1,n}(h)|)
 \leqslant
 \frac{\theta\|K\|_{2}^{2}}{nh}\mathbb E(Y_{1}^{2}) +
 \frac{2\mathfrak c_U}{\theta}\cdot\frac{\log(n)^5}{n}
 \end{displaymath}
 and
 \begin{displaymath}
 \mathbb E(|\widetilde\psi_{1,n}(\widetilde h_n)|)
 \leqslant
 \mathbb E\left(\frac{\theta\|K\|_{2}^{2}}{n\widetilde h_n}\right)\mathbb E(Y_{1}^{2}) +
 \frac{2\mathfrak c_U}{\theta}\cdot\frac{\log(n)^5}{n}.
 \end{displaymath}
 \item On the one hand, for every $\eta,\eta'\in\mathcal H_n$, consider
 \begin{displaymath}
 \Psi_{2,n}(\eta,\eta') :=
 \frac{1}{n}\sum_{k = 1}^{n}Y_k\langle K_{\eta}(X_k -\cdot),(bf)_{\eta'}\rangle_2.
 \end{displaymath}
 Then,
 \begin{eqnarray*}
  & &
  \mathbb E\left(\sup_{\eta,\eta'\in\mathcal H_n}|\Psi_{2,n}(\eta,\eta')|\right)\\
  & &
  \hspace{1cm}\leqslant
  \mathbb E\left(|Y_1|\sup_{\eta,\eta'\in\mathcal H_n}\int_{-\infty}^{\infty}
  |K_{\eta}(X_1 - u)(bf)_{\eta'}(u)|du\right)\\
  & &
  \hspace{1cm}\leqslant
  \mathbb E(Y_{1}^{2})^{1/2}
  \|K\|_{1}^{2}\|bf\|_{\infty}.
 \end{eqnarray*}
 On the other hand,
 \begin{eqnarray*}
  \sup_{\eta,\eta'\in\mathcal H_n}
  |\langle (bf)_{\eta},(bf)_{\eta'}\rangle_2|
  & \leqslant &
  \sup_{\eta,\eta'\in\mathcal H_n}
  \|K_{\eta}\ast (bf)\|_2
  \|K_{\eta'}\ast (bf)\|_2\\
  & \leqslant &
  \|K\|_{1}^{2}\|bf\|_{2}^{2}
  \leqslant
  \mathbb E(Y_{1}^{2})^{1/2}
  \|K\|_{1}^{2}\|bf\|_{\infty}.
 \end{eqnarray*}
 Then,
 \begin{displaymath}
 \mathbb E(|\psi_{2,n}(h)|)
 \leqslant
 \frac{3}{n}\mathbb E(Y_{1}^{2})^{1/2}
  \|K\|_{1}^{2}\|bf\|_{\infty}
 \end{displaymath}
 and
 \begin{displaymath}
 \mathbb E(|\psi_{2,n}(\widetilde h_n)|)
 \leqslant
 \frac{3}{n}\mathbb E(Y_{1}^{2})^{1/2}
  \|K\|_{1}^{2}\|bf\|_{\infty}.
 \end{displaymath}
 \item By Lemma \ref{bound_V},
 \begin{eqnarray*}
  \mathbb E(|\psi_{n,3}(h)|) & \leqslant &
  \frac{\theta}{2}
  (\|(bf)_h - bf\|_{2}^{2} +\|(bf)_{h_{\min}} - bf\|_{2}^{2}) +
  4\mathfrak c_V\frac{\log(n)^3}{\theta n}\\
  & & +
  \left(\frac{\theta}{2}\right)^{1/2}\|(bf)_h - bf\|_2\times
  \left(\frac{2}{\theta}\right)^{1/2}\|(bf)_{h_{\min}} - bf\|_2\\
  & \leqslant &
  \theta\|(bf)_h - bf\|_{2}^{2} +
  \left(\frac{\theta}{2} +\frac{2}{\theta}\right)\|(bf)_{h_{\min}} - bf\|_{2}^{2}\\
  & &
  + 4\mathfrak c_V\frac{\log(n)^3}{\theta n}
 \end{eqnarray*}
 and
 \begin{eqnarray*}
  \mathbb E(|\psi_{n,3}(\widetilde h_n)|) & \leqslant &
  \theta\mathbb E(\|(bf)_{\widetilde h_n} - bf\|_{2}^{2})\\
  & & +
  \left(\frac{\theta}{2} +\frac{2}{\theta}\right)\|(bf)_{h_{\min}} - bf\|_{2}^{2}
  + 4\mathfrak c_V\frac{\log(n)^3}{\theta n}.
 \end{eqnarray*}
\end{enumerate}
\textbf{Step 3.} Consider
\begin{displaymath}
\widetilde\psi_n(h) :=
\psi_n(h)
-\frac{\langle K_{h_{\min}},K_h\rangle_2}{n^2}\sum_{k = 1}^{n}Y_{k}^{2}.
\end{displaymath}
By Step 2, there exists a deterministic constant $\mathfrak c_{U,V} > 0$, not depending on $n$, $h$ and $h_{\min}$, such that
\begin{eqnarray*}
 \mathbb E(|\widetilde\psi_n(h)|)
 & \leqslant &
 \theta\left(\|(bf)_h - bf\|_{2}^{2} +
 \frac{\mathfrak c_{K,Y}}{nh}\right)\\
 & &
 +\frac{\mathfrak c_{U,V}}{\theta}\cdot\frac{\log(n)^5}{n}
 +\left(\frac{\theta}{2} +\frac{2}{\theta}\right)\|(bf)_{h_{\min}} - bf\|_{2}^{2}
\end{eqnarray*}
and
\begin{eqnarray*}
 \mathbb E(|\widetilde\psi_n(\widetilde h_n)|)
 & \leqslant &
 \theta\left[\mathbb E(\|(bf)_{\widetilde h_n} - bf\|_{2}^{2}) +
 \mathbb E\left(\frac{\mathfrak c_{K,Y}}{n\widetilde h_n}\right)\right]\\
 & &
 +\frac{\mathfrak c_{U,V}}{\theta}\cdot\frac{\log(n)^5}{n}
 +\left(\frac{\theta}{2} +\frac{2}{\theta}\right)\|(bf)_{h_{\min}} - bf\|_{2}^{2}.
\end{eqnarray*}
Then, by Lemma \ref{Lerasle_type_inequality},
\begin{eqnarray*}
 \mathbb E(|\widetilde\psi_n(h)|)
 & \leqslant &
 \frac{\theta}{1 -\theta}\mathbb E(\|\widehat{bf}_{n,h} - bf\|_{2}^{2})
 +\left(\frac{\theta}{2} +\frac{2}{\theta}\right)\|(bf)_{h_{\min}} - bf\|_{2}^{2}\\
 & &
 +\left(\frac{\mathfrak c_{U,V}}{\theta} +\frac{\mathfrak c_L}{1 -\theta}\right)
 \frac{\log(n)^5}{n}
\end{eqnarray*}
and
\begin{eqnarray*}
 \mathbb E(|\widetilde\psi_n(\widetilde h_n)|)
 & \leqslant &
 \frac{\theta}{1 -\theta}\mathbb E(\|\widehat{bf}_{n,\widetilde h_n} - bf\|_{2}^{2})
 +\left(\frac{\theta}{2} +\frac{2}{\theta}\right)\|(bf)_{h_{\min}} - bf\|_{2}^{2}\\
 & &
 +\left(\frac{\mathfrak c_{U,V}}{\theta} +\frac{\mathfrak c_L}{1 -\theta}\right)
 \frac{\log(n)^5}{n}.
\end{eqnarray*}
By Inequality (\ref{bound_LMR_1}), there exist two deterministic constant $\mathfrak c_1,\mathfrak c_2 > 0$, not depending on $n$, $h$ and $h_{\min}$, such that
\begin{eqnarray*}
 \mathbb E(\|\widehat{bf}_{n,\widetilde h_n} - bf\|_{2}^{2})
 & \leqslant &
 \mathbb E(\|\widehat{bf}_{n,h} - bf\|_{2}^{2})
 + 2(\mathbb E(|\widetilde\psi_n(h)|)
 +\mathbb E(|\widetilde\psi_n(\widetilde h_n)|))\\
 & \leqslant &
 \left(1 +\frac{2\theta}{1 -\theta}\right)\mathbb E(\|\widehat{bf}_{n,h} - bf\|_{2}^{2})\\
 & & +
 \frac{2\theta}{1 -\theta}\mathbb E(\|\widehat{bf}_{n,\widetilde h_n} - bf\|_{2}^{2})\\
 & & +
 \frac{\mathfrak c_1}{\theta}\|(bf)_{h_{\min}} - bf\|_{2}^{2} +
 \frac{\mathfrak c_2}{\theta(1 -\theta)}\cdot\frac{\log(n)^5}{n}.
\end{eqnarray*}
This concludes the proof.
%


%
\subsubsection{Proof of Lemma \ref{bound_U}}
Consider
\begin{displaymath}
\Delta_n :=
\{(k,l)\in\{1,\dots,n\} : 2\leqslant k\textrm{ and }l < k\}
\end{displaymath}
and $Z_k := (X_k,Y_k)$ for every $k\in\{1,\dots,n\}$.
\\
\\
On the one hand, consider $n\in\mathbb N$ such that $\mathfrak m(n) := 4\log(n)/\alpha\geqslant 1$ and
\begin{displaymath}
U_{1,n}(h,h_{\min}) :=
\sum_{k = 2}^{n}
\sum_{l < k}
(G_{n,h,h_{\min}}(Z_k,Z_l) +
G_{n,h_{\min},h}(Z_k,Z_l))
\end{displaymath}
where, for every $\eta,\eta'\in\{h,h_{\min}\}$ and $z,z'\in\mathbb R^2$,
\begin{eqnarray*}
 & &
 G_{n,\eta,\eta'}(z,z') :=\\
 & &
 \hspace{0.75cm}
 \langle z_2\mathbf 1_{|z_2|\leqslant\mathfrak m(n)}K_{\eta}(z_1 -\cdot) - (bf)_{n,\eta},
 z_2'\mathbf 1_{|z_2'|\leqslant\mathfrak m(n)}K_{\eta'}(z_1' -\cdot) - (bf)_{n,\eta'}\rangle_2
\end{eqnarray*}
and
\begin{displaymath}
(bf)_{n,\eta} :=\mathbb E(Y_1\mathbf 1_{|Y_1|\leqslant\mathfrak m(n)}K_{\eta}(X_1 -\cdot)).
\end{displaymath}
For every $\eta,\eta'\in\{h,h_{\min}\}$ and $(k,l)\in\Delta_n$,
\begin{eqnarray*}
 \mathbb E(G_{n,\eta,\eta'}(Z_k,Z_l)|Z_k) & = &
 \int_{-\infty}^{\infty}(Y_k\mathbf 1_{|Y_k|\leqslant\mathfrak m(n)}K_{\eta}(X_k - z) - (bf)_{n,\eta}(z))\\
 & &
 \times\mathbb E(Y_l\mathbf 1_{|Y_l|\leqslant\mathfrak m(n)}K_{\eta'}(X_l - z) - (bf)_{n,\eta'}(z))dz
 = 0.
\end{eqnarray*}
So, by Houdr\'e and Reynaud-Bouret \cite{HRB03}, Theorem 3.4, there exists a universal constant $\mathfrak e > 0$ such that
\begin{equation}\label{bound_U_1}
\mathbb P(|U_{1,n}(h,h_{\min})|\geqslant\mathfrak e(\mathfrak c_n\lambda^{1/2} +\mathfrak d_n\lambda +\mathfrak b_n\lambda^{3/2} +\mathfrak a_n\lambda^2))\leqslant 5.54e^{-\lambda}
\end{equation}
where the constants $\mathfrak a_n$, $\mathfrak b_n$, $\mathfrak c_n$ and $\mathfrak d_n$ will be defined and controlled in the sequel.
\begin{itemize}
 \item\textbf{The constant $\mathfrak a_n$.} Consider
 \begin{displaymath}
 \mathfrak a_n :=
 \sup_{(z,z')\in\mathbb R^2\times\mathbb R^2}
 \mathbf A_n(z,z'),
 \end{displaymath}
 where
 \begin{displaymath}
 \mathbf A_n(z,z') :=
 |G_{n,h,h_{\min}}(z,z') + G_{n,h_{\min},h}(z,z')|
 \textrm{ $;$ }
 \forall z,z'\in\mathbb R^2.
 \end{displaymath}
 First, note that for every $\eta\in\mathcal H_n$,
 \begin{displaymath}
 \|(bf)_{n,\eta}\|_1
 \leqslant\mathbb E(|Y_1|\mathbf 1_{|Y_1|\leqslant\mathfrak m(n)})\|K\|_1
 \leqslant\mathfrak m(n)\|K\|_1
 \end{displaymath}
 and
 \begin{displaymath}
 \|(bf)_{n,\eta}\|_{\infty}
 \leqslant\frac{\mathfrak m(n)\|K\|_{\infty}}{\eta}.
 \end{displaymath}
 For any $z,z'\in\mathbb R\times [-\mathfrak m(n),\mathfrak m(n)]$,
 \begin{eqnarray*}
  \mathbf A_n(z,z') & \leqslant &
  \langle z_2K_h(z_1 -\cdot) - (bf)_{n,h},z_2'K_{h_{\min}}(z_1' -\cdot) - (bf)_{n,h_{\min}}\rangle_2\\
  & & +
  \langle z_2K_{h_{\min}}(z_1 -\cdot) - (bf)_{n,h_{\min}},z_2'K_h(z_1' -\cdot) - (bf)_{n,h}\rangle_2\\
  & \leqslant &
  2(\mathfrak m(n)\|K_{h_{\min}}\|_{\infty} +\|(bf)_{n,h_{\min}}\|_{\infty})\\
  & &
  \times(\mathfrak m(n)\|K\|_1 +\|(bf)_{n,h}\|_1)\\
  & \leqslant &
  \frac{8\|K\|_1\|K\|_{\infty}}{h_{\min}}\mathfrak m(n)^2.
 \end{eqnarray*}
 Therefore,
 \begin{displaymath}
 \frac{\mathfrak a_n\lambda^2}{n^2}
 \leqslant\frac{8\|K\|_1\|K\|_{\infty}}{n^2h_{\min}}\mathfrak m(n)^2\lambda^2.
 \end{displaymath}
 \item\textbf{The constant $\mathfrak b_n$.} Consider
 \begin{displaymath}
 \mathfrak b_{n}^{2} :=
 n\max\left\{
 \sup_{z\in\mathbb R^2}\mathbb E(G_{n,h,h_{\min}}(z,Z_1)^2)
 \textrm{ $;$ }
 \sup_{z\in\mathbb R^2}\mathbb E(G_{n,h_{\min},h}(z,Z_1)^2)
 \right\}.
 \end{displaymath}
 First, note that for every $\eta\in\mathcal H_n$,
 \begin{displaymath}
 \|(bf)_{n,\eta}\|_{2}^{2}
 \leqslant\frac{\mathfrak m(n)^2\|K\|_{2}^{2}}{\eta}.
 \end{displaymath}
 For any $\eta,\eta'\in\{h,h_{\min}\}$ and $z\in\mathbb R\times [-\mathfrak m(n),\mathfrak m(n)]$,
 \begin{eqnarray*}
  & &
  \mathbb E(G_{n,\eta,\eta'}(z,Z_1)^2)\\
  & &
  \hspace{1cm}\leqslant
  \|z_2K_{\eta}(z_1 -\cdot) - (bf)_{n,\eta}\|_{2}^{2}\\
  & &
  \hspace{2cm}
  \times\int_{-\infty}^{\infty}
  \mathbb E(|Y_1\mathbf 1_{|Y_1|\leqslant\mathfrak m(n)}K_{\eta'}(X_1 - u) - (bf)_{n,\eta'}(u)|^2)du\\
  & &
  \hspace{1cm}\leqslant
  \frac{2\|K\|_{2}^{2}}{\eta}\mathfrak m(n)^2
  \int_{-\infty}^{\infty}\textrm{var}(Y_1\mathbf 1_{|Y_1|\leqslant\mathfrak m(n)}K_{\eta'}(X_1 - u))du\\
  & &
  \hspace{1cm}\leqslant
  \frac{2\|K\|_{2}^{4}}{\eta\eta'}\mathbb E(Y_{1}^{2})\mathfrak m(n)^2.
 \end{eqnarray*}
 Therefore, for any $\theta\in (0,1)$,
 \begin{eqnarray*}
  \frac{\mathfrak b_n\lambda^{3/2}}{n^2}
  & \leqslant &
  \sqrt 2\cdot\frac{\|K\|_{2}^{2}}{n^{3/2}(hh_{\min})^{1/2}}\mathbb E(Y_{1}^{2})^{1/2}\mathfrak m(n)\lambda^{3/2}\\
  & \leqslant &
  2\left(\frac{3\mathfrak e}{\theta}\right)^{1/2}\frac{\|K\|_2}{nh_{\min}^{1/2}}\mathfrak m(n)\lambda^{3/2}
  \times\left(\frac{\theta}{3\mathfrak e}\right)^{1/2}\frac{\|K\|_2}{n^{1/2}h^{1/2}}\mathbb E(Y_{1}^{2})^{1/2}\\
  & \leqslant &
  \frac{3\mathfrak e\|K\|_{2}^{2}}{\theta n^2h_{\min}}\mathfrak m(n)^2\lambda^3 
  +\frac{\theta\|K\|_{2}^{2}}{3\mathfrak enh}\mathbb E(Y_{1}^{2}).
 \end{eqnarray*}
 \item\textbf{The constant $\mathfrak c_n$.} Consider
 \begin{displaymath}
 \mathfrak c_{n}^{2} :=
 \sum_{(k,l)\in\Delta_n}\mathbb E(|G_{n,h,h_{\min}}(Z_k,Z_l) + G_{n,h_{\min},h}(Z_k,Z_l)|^2).
 \end{displaymath}
 First, note that for every $\eta\in\mathcal H_n$,
 \begin{displaymath}
 \|(bf)_{n,\eta}\|_{\infty}
 \leqslant
 \mathfrak m(n)\|f\|_{\infty}\|K\|_1.
 \end{displaymath}
 For any $\eta,\eta'\in\{h,h_{\min}\}$ and $(k,l)\in\Delta_n$,
 \begin{eqnarray*}
  \mathbb E(G_{n,\eta,\eta'}(Z_k,Z_l)^2) & \leqslant &
  4(\mathfrak m(n)^2\mathbb E(\langle K_{\eta}(X_k -\cdot),K_{\eta'}(X_l -\cdot)\rangle_{2}^{2}Y_{l}^{2})\\
  & &
  +\|(bf)_{n,\eta}\|_{\infty}^{2}\mathbb E(Y_{l}^{2}\|K_{\eta'}(X_l -\cdot)\|_{1}^{2})\\
  & &
  +\|(bf)_{n,\eta'}\|_{\infty}^{2}\mathbb E(Y_{k}^{2}\|K_{\eta}(X_k -\cdot)\|_{1}^{2})\\
  & &
  +\|(bf)_{n,\eta}\|_{\infty}^{2}\|(bf)_{n,\eta'}\|_{1}^{2})\\
  & \leqslant &
  4\mathfrak m(n)^2(\mathbb E(\langle K_{\eta}(X_k -\cdot),K_{\eta'}(X_l -\cdot)\rangle_{2}^{2}Y_{l}^{2})\\
  & &
  + 3\|f\|_{\infty}^{2}\|K\|_{1}^{4}\mathbb E(Y_{1}^{2})).
 \end{eqnarray*}
 Moreover,
 \begin{eqnarray*}
  \mathbb E(\langle K_{\eta}(X_k -\cdot),K_{\eta'}(X_l -\cdot)\rangle_{2}^{2}Y_{l}^{2}) & = &
  \sigma^2\mathbb E((K_{\eta}\ast K_{\eta'})(X_k - X_l)^2)\\
  & & +
  \mathbb E((K_{\eta}\ast K_{\eta'})(X_k - X_l)^2b(X_l)^2)\\
  & \leqslant &
  \sigma^2\|f\|_{\infty}\|K_{\eta}\ast K_{\eta'}\|_{2}^{2}\\
  & &
  +\|f\|_{\infty}\mathbb E(b(X_1)^2)\|K_{\eta}\ast K_{\eta'}\|_{2}^{2}\\
  & \leqslant &
  \frac{\|f\|_{\infty}\|K\|_{1}^{2}\|K\|_{2}^{2}}{\eta}\mathbb E(Y_{1}^{2}).
 \end{eqnarray*}
 Then, there exists a universal constant $\mathfrak c_1 > 0$ such that
 \begin{displaymath}
 \mathfrak c_{n}^{2}
 \leqslant
 \mathfrak c_1n^2\|f\|_{\infty}\|K\|_{1}^{2}\mathfrak m(n)^2\mathbb E(Y_{1}^{2})\left(
 \frac{\|K\|_{2}^{2}}{h} +
 3\|f\|_{\infty}\|K\|_{1}^{2}\right).
 \end{displaymath}
 Therefore, since $\mathfrak m(n)$ is larger than $1$, there exists a universal constant $\mathfrak c_2 > 0$ such that
 \begin{displaymath}
 \frac{\mathfrak c_n\lambda^{1/2}}{n^2}
 \leqslant
 \frac{\theta\|K\|_{2}^{2}}{3\mathfrak enh}\mathbb E(Y_{1}^{2}) +
 \frac{\mathfrak c_2}{n\theta}\|f\|_{\infty}\|K\|_{1}^{2}
 \mathfrak m(n)^2(\lambda^{1/2} +\lambda).
 \end{displaymath}
 \item\textbf{The constant $\mathfrak d_n$.} Consider
 \begin{displaymath}
 \mathfrak d_n :=
 \sup_{(\alpha,\beta)\in\mathcal S}
 \sum_{(k,l)\in\Delta_n}
 \mathbb E((G_{h,h_{\min}}(Z_k,Z_l) + G_{h_{\min},h}(Z_k,Z_l))\alpha_k(Z_k)\beta_l(Z_l)),
 \end{displaymath}
 where
 \begin{displaymath}
 \mathcal S :=
 \left\{
 (\alpha,\beta) :
 \sum_{k = 2}^{n}\mathbb E(\alpha_k(Z_k)^2)\leqslant 1
 \textrm{ and }
 \sum_{l = 1}^{n - 1}\mathbb E(\beta_l(Z_l)^2)\leqslant 1
 \right\}.
 \end{displaymath}
 For any $(\alpha,\beta)\in\mathcal S$,
 \begin{displaymath}
 \sum_{(k,l)\in\Delta_n}
 \mathbb E(G_{h,h_{\min}}(Z_k,Z_l)\alpha_k(Z_k)\beta_l(Z_l))
 \leqslant
 \mathbf D_2(\alpha,\beta)\sup_{u\in\mathbb R}\mathbf D_1(\alpha,\beta,u)
 \end{displaymath}
 with, for every $u\in\mathbb R$,
 \begin{eqnarray*}
  \mathbf D_1(\alpha,\beta,u)
  & := &
  \sum_{k = 2}^{n}\mathbb E(|\alpha_k(Z_k)(Y_k\mathbf 1_{|Y_k|\leqslant\mathfrak m(n)}K_h(X_k - u) - (bf)_{n,h}(u))|)\\
  & \leqslant &
  \mathbb E\left[\left(\sum_{k = 2}^{n}\alpha_k(Z_k)^2\right)^{1/2}\right.\\
  & &
  \left.\times
  \left(\sum_{k = 2}^{n}|Y_k\mathbf 1_{|Y_k|\leqslant\mathfrak m(n)}K_h(X_k - u) - (bf)_{n,h}(u)|^2\right)^{1/2}\right]\\
  & \leqslant &
  \left(\sum_{k = 2}^{n}\mathbb E(\alpha_k(Z_k)^2)\right)^{1/2}\\
  & &
  \times
  \left(\sum_{k = 2}^{n}\mathbb E(Y_{k}^{2}\mathfrak 1_{|Y_k|\leqslant\mathfrak m(n)}K_h(X_k - u)^2)\right)^{1/2}\\
  & \leqslant &
  \frac{\|f\|_{\infty}^{1/2}\|K\|_2}{h^{1/2}}n^{1/2}\mathfrak m(n)
 \end{eqnarray*}
 and
 \begin{small}
 \begin{eqnarray*}
  & &
  \mathbf D_2(\alpha,\beta)\\
  & &
  \hspace{0.2cm} :=
  \sum_{l = 1}^{n - 1}\mathbb E\left(|\beta_l(Z_l)|\int_{-\infty}^{\infty}
  |Y_l\mathbf 1_{|Y_l|\leqslant\mathfrak m(n)}K_{h_{\min}}(X_l - u) - (bf)_{n,h_{\min}}(u)|du\right)\\
  & &
  \hspace{0.2cm}\leqslant
  \sqrt 2\left(\sum_{l = 1}^{n - 1}\mathbb E(\beta_l(Z_l)^2)\right)^{1/2}\\
  & &
  \hspace{1cm}\times
  \left(
  \sum_{l = 1}^{n - 1}[\mathbb E(
  Y_{l}^{2}\|K_{h_{\min}}(X_l -\cdot)\|_{1}^{2}) +\|(bf)_{n,h_{\min}}\|_{1}^{2}]\right)^{1/2}\\
  & &
  \hspace{0.2cm}\leqslant
  \sqrt 2\cdot\|K\|_1\mathbb E(Y_{1}^{2})^{1/2}n^{1/2}.
 \end{eqnarray*}
 \end{small}
 \newline
 Then,
 \begin{displaymath}
 \mathfrak d_n\leqslant
 2n\frac{\|K\|_2\|K\|_1\|f\|_{\infty}^{1/2}}{h^{1/2}}\mathbb E(Y_{1}^{2})^{1/2}\mathfrak m(n).
 \end{displaymath}
 Therefore,
 \begin{eqnarray*}
  \frac{\mathfrak d_n\lambda}{n^2}
  & \leqslant &
  2\times\left(\frac{\theta}{3\mathfrak e}\right)^{1/2}\frac{\|K\|_2}{(nh)^{1/2}}\mathbb E(Y_{1}^{2})^{1/2}
  \times\left(\frac{3\mathfrak e}{\theta}\right)^{1/2}\frac{\|K\|_1\|f\|_{\infty}^{1/2}}{n^{1/2}}\mathfrak m(n)\lambda\\
  & \leqslant &
  \frac{\theta\|K\|_{2}^{2}}{3\mathfrak enh}\mathbb E(Y_{1}^{2}) +
  \frac{3\mathfrak e\|K\|_{1}^{2}\|f\|_{\infty}}{\theta n}\mathfrak m(n)^2\lambda^2.
 \end{eqnarray*}
\end{itemize}
So, there exist two universal constants $\mathfrak c_3,\mathfrak c_4 > 0$ such that, with probability larger than $1 - 5.54e^{-\lambda}$,
\begin{eqnarray*}
 \frac{|U_{1,n}(h,h_{\min})|}{n^2} & \leqslant &
 \frac{\theta\|K\|_{2}^{2}}{nh}
 \mathbb E(Y_{1}^{2})\\\
 & &
 +\mathfrak c_3\left(\frac{\|K\|_1\|K\|_{\infty}}{n^2h_{\min}}\mathfrak m(n)^2
 \left(\frac{\lambda^3}{\theta} +\lambda^2\right)\right.\\
 & &
 \left.
 +\frac{\|f\|_{\infty}\|K\|_{1}^{2}}{n\theta}\mathfrak m(n)^2
 (\lambda^2 +\lambda +\lambda^{1/2})
 \right)\\
 & \leqslant &
 \frac{\theta\|K\|_{2}^{2}}{nh}\mathbb E(Y_{1}^{2})\\
 & & +
 \frac{\mathfrak c_4}{\theta}\left(\frac{\|K\|_1\|K\|_{\infty}}{n^2h_{\min}}
 +\frac{\|f\|_{\infty}\|K\|_{1}^{2}}{n}
 \right)\mathfrak m(n)^2(1 +\lambda)^3.
\end{eqnarray*}
Then, with probability larger than $1 - 5.54|\mathcal H_n|e^{-\lambda}$,
\begin{eqnarray*}
 S_n(h_{\min})
 & \leqslant &
 \frac{\mathfrak c_4}{\theta}\left(\frac{\|K\|_1\|K\|_{\infty}}{n^2h_{\min}}
 +\frac{\|f\|_{\infty}\|K\|_{1}^{2}}{n}
 \right)\mathfrak m(n)^2(1 +\lambda)^3
\end{eqnarray*}
where
\begin{displaymath}
S_n(h_{\min}) :=
\sup_{h\in\mathcal H_n}\left\{
\frac{|U_{1,n}(h,h_{\min})|}{n^2} -\frac{\theta\|K\|_{2}^{2}}{nh}\mathbb E(Y_{1}^{2})\right\}.
\end{displaymath}
For every $s\in\mathbb R_+$, consider
\begin{displaymath}
\lambda(s) :=
-1 +
\left(\frac{s}{\mathfrak m(n,h_{\min},\theta)}\right)^{1/3},
\end{displaymath}
where
\begin{displaymath}
\mathfrak m(n,h_{\min},\theta) :=
\frac{\mathfrak c_4}{\theta}\left(\frac{\|K\|_1\|K\|_{\infty}}{n^2h_{\min}}
+\frac{\|f\|_{\infty}\|K\|_{1}^{2}}{n}
\right)\mathfrak m(n)^2.
\end{displaymath}
Then, for any $A > 0$,
\begin{eqnarray*}
 \mathbb E(S_n(h_{\min})) & \leqslant &
 A +
 \int_{A}^{\infty}\mathbb P(S_n(h_{\min})\geqslant s)ds\\
 & \leqslant &
 A + 5.54\mathfrak c_5|\mathcal H_n|
 \mathfrak m(n,h_{\min},\theta)\exp\left(
 -\frac{A^{1/3}}{2\mathfrak m(n,h_{\min},\theta)^{1/3}}
 \right)
\end{eqnarray*}
where
\begin{displaymath}
\mathfrak c_5 :=
\int_{0}^{\infty}e^{1 - s^{1/3}/2}ds.
\end{displaymath}
Since there exists a deterministic constant $\mathfrak c_6 > 0$, not depending on $n$ and $h_{\min}$ such that
\begin{displaymath}
\mathfrak m(n,h_{\min},\theta)\leqslant
\mathfrak c_6\frac{\log(n)^2}{n},
\end{displaymath}
by taking $A := 2^3\mathfrak c_6\log(n)^5/n$,
\begin{displaymath}
\mathbb E(S_n(h_{\min}))\leqslant
2^3\mathfrak c_6\frac{\log(n)^5}{n} +
5.54\mathfrak c_5
\mathfrak m(n,h_{\min},\theta)\frac{|\mathcal H_n|}{n}.
\end{displaymath}
Therefore, since $|\mathcal H_n|\leqslant n$, there exists a deterministic constant $\mathfrak c_7 > 0$, not depending on $n$ and $h_{\min}$, such that
\begin{displaymath}
\mathbb E\left(\sup_{h\in\mathcal H_n}\left\{
\frac{|U_{1,n}(h,h_{\min})|}{n^2} -\frac{\theta\|K\|_{2}^{2}}{nh}\mathbb E(Y_{1}^{2})\right\}\right)\leqslant
\frac{\mathfrak c_7}{\theta}\cdot\frac{\log(n)^5}{n}.
\end{displaymath}
On the other hand,
\begin{displaymath}
U_n(h,h_{\min}) =
\sum_{i = 1}^{4}U_{i,n}(h,h_{\min})
\end{displaymath}
where, for $i = 2,3,4$,
\begin{displaymath}
U_{i,n}(h,h_{\min}) :=
\sum_{k\not= l}g_{n,h,h_{\min}}^{i}(Z_k,Z_l)
\end{displaymath}
with
\begin{eqnarray*}
 g_{n,h,h_{\min}}^{2}(z,z') & := &
 \langle z_2\mathbf 1_{|z_2|\leqslant\mathfrak m(n)}K_h(z_1 -\cdot),
 z_2'\mathbf 1_{|z_2'| >\mathfrak m(n)}K_{h_{\min}}(z_1' -\cdot)\rangle_2,\\
 g_{n,h,h_{\min}}^{3}(z,z') & := &
 \langle z_2\mathbf 1_{|z_2| >\mathfrak m(n)}K_h(z_1 -\cdot),
 z_2'\mathbf 1_{|z_2'|\leqslant\mathfrak m(n)}K_{h_{\min}}(z_1' -\cdot)\rangle_2
 \textrm{ and}\\
 g_{n,h,h_{\min}}^{4}(z,z') & := &
 \langle z_2\mathbf 1_{|z_2| >\mathfrak m(n)}K_h(z_1 -\cdot),
 z_2'\mathbf 1_{|z_2'| >\mathfrak m(n)}K_{h_{\min}}(z_1' -\cdot)\rangle_2
\end{eqnarray*}
for every $z,z'\in\mathbb R^2$. Consider $k,l\in\{1,\dots,n\}$ such that $k\not= l$. By Markov's inequality,
\begin{eqnarray*}
 & &
 \mathbb E\left(\sup_{h\in\mathcal H_n}|g_{n,h,h_{\min}}^{2}(Z_k,Z_l)|\right)\\
 & &
 \hspace{0.5cm}\leqslant
 \mathfrak m(n)
 \sum_{h\in\mathcal H_n}\int_{-\infty}^{\infty}\mathbb E(|K_h(X_k - u)|)
 \mathbb E(|Y_l|\mathbf 1_{|Y_l| >\mathfrak m(n)}|K_{h_{\min}}(X_l - u)|)du\\
 & &
 \hspace{0.5cm}\leqslant
 \mathfrak m(n)|\mathcal H_n|\cdot\|f\|_{\infty}\|K\|_{1}^{2}
 \mathbb E(Y_{1}^{2})^{1/2}
 \mathbb P(\exp(\alpha|Y_1|) > n^4)^{1/2}\\
 & &
 \hspace{0.5cm}\leqslant
 \|f\|_{\infty}\|K\|_{1}^{2}\mathbb E(Y_{1}^{2})^{1/2}
 \mathbb E(\exp(\alpha|Y_1|))^{1/2}\frac{\mathfrak m(n)}{n^2}|\mathcal H_n|.
\end{eqnarray*}
Then, there exists a deterministic constant $\mathfrak c_8 > 0$, not depending on $n$ and $h_{\min}$, such that
\begin{displaymath}
\mathbb E\left(\sup_{h\in\mathcal H_n}\frac{|U_{2,n}(h,h_{\min})|}{n^2}\right)
\leqslant
\mathfrak c_8\frac{\log(n)}{n}.
\end{displaymath}
The same ideas give that there exists a deterministic constant $\mathfrak c_9 > 0$, not depending on $n$ and $h_{\min}$, such that
\begin{displaymath}
\mathbb E\left(\sup_{h\in\mathcal H_n}\frac{|U_{3,n}(h,h_{\min})|}{n^2}\right)
\leqslant
\mathfrak c_9\frac{\log(n)}{n}.
\end{displaymath}
For $i = 4$, by Markov's inequality,
\begin{eqnarray*}
 & &
 \mathbb E\left(\sup_{h\in\mathcal H_n}
 |g_{n,h,h_{\min}}^{4}(Z_k,Z_l)|\right)\\
 & &
 \hspace{0.5cm}
 \leqslant
 \sum_{h\in\mathcal H_n}
 \int_{-\infty}^{\infty}\mathbb E(|Y_k|\mathbf 1_{|Y_k| >\mathfrak m(n)}|K_h(X_k - u)|)\\
 & &
 \hspace{3cm}
 \times\mathbb E(|Y_l|\mathbf 1_{|Y_l| >\mathfrak m(n)}|K_{h_{\min}}(X_l - u)|)du\\
 & &
 \hspace{0.5cm}\leqslant
 \frac{\|K\|_{\infty}}{h_{\min}}\mathbb E(|Y_l|\mathbf 1_{|Y_l| >\mathfrak m(n)})\\
 & &
 \hspace{3cm}
 \times
 \sum_{h\in\mathcal H_n}
 \int_{-\infty}^{\infty}\mathbb E(|Y_k|\mathbf 1_{|Y_k| >\mathfrak m(n)}|K_h(X_k - u)|)du\\
 & &
 \hspace{0.5cm}\leqslant
 \frac{\|K\|_{\infty}\|K\|_1}{h_{\min}}|\mathcal H_n|\cdot\mathbb E(Y_{1}^{2})\mathbb P(|Y_1| >\mathfrak m(n))\\
 & &
 \hspace{0.5cm}\leqslant
 \|K\|_{\infty}\|K\|_1
 \mathbb E(Y_{1}^{2})
 \mathbb E(\exp(\alpha|Y_1|))\frac{1}{n^4h_{\min}}|\mathcal H_n|.
\end{eqnarray*}
Then, there exists a deterministic constant $\mathfrak c_{10} > 0$, not depending on $n$ and $h_{\min}$, such that
\begin{displaymath}
\mathbb E\left(\sup_{h\in\mathcal H_n}\frac{|U_{4,n}(h,h_{\min})|}{n^2}\right)
\leqslant
\mathfrak c_{10}\frac{\log(n)}{n^3h_{\min}}.
\end{displaymath}
Therefore,
\begin{displaymath}
\mathbb E\left(\sup_{h\in\mathcal H_n}\left\{
\frac{|U_n(h,h_{\min})|}{n^2} -\frac{\theta\|K\|_{2}^{2}}{nh}\mathbb E(Y_{1}^{2})\right\}\right)
\leqslant
\frac{\mathfrak c_U}{\theta}\cdot\frac{\log(n)^5}{n}.
\end{displaymath}
%


%
\subsubsection{Proof of Lemma \ref{bound_V}}
Consider $\mathfrak m(n) := 4\log(n)/\alpha$. For any $\eta,\eta'\in\mathcal H_n$,
\begin{displaymath}
V_n(\eta,\eta') = V_{1,n}(\eta,\eta') + V_{2,n}(\eta,\eta')
\end{displaymath}
where
\begin{displaymath}
V_{i,n}(\eta,\eta') :=
\frac{1}{n}\sum_{k = 1}^{n}(g_{\eta,\eta'}^{i}(X_k,Y_k)
-\mathbb E(g_{\eta,\eta'}^{i}(X_k,Y_k)))
\textrm{ $;$ }
i = 1,2
\end{displaymath}
with, for every $x,y\in\mathbb R$,
\begin{displaymath}
g_{\eta,\eta'}^{1}(x,y) :=
\langle yK_{\eta}(x -\cdot),
(bf)_{\eta'} - bf\rangle_2\mathbf 1_{|y|\leqslant\mathfrak m(n)}
\end{displaymath}
and
\begin{displaymath}
g_{\eta,\eta'}^{2}(x,y) :=
\langle yK_{\eta}(x -\cdot),
(bf)_{\eta'} - bf\rangle_2\mathbf 1_{|y| >\mathfrak m(n)}.
\end{displaymath}
In order to apply Bernstein's inequality to $g_{\eta,\eta'}^{1}(X_k,Y_k)$, $k = 1,\dots,n$, let us find suitable controls of
\begin{displaymath}
\mathfrak c_{\eta,\eta'} :=
\frac{\|g_{\eta,\eta'}^{1}\|_{\infty}}{3}
\textrm{ and }
\mathfrak v_{\eta,\eta'} :=
\mathbb E(g_{\eta,\eta'}^{1}(X_1,Y_1)^2).
\end{displaymath}
On the one hand, since $\|K\|_1\geqslant 1$ and $bf$ is bounded,
\begin{eqnarray*}
 \mathfrak c_{\eta,\eta'} & = &
 \frac{1}{3}\sup_{x,y\in\mathbb R}
 |\langle yK_{\eta}(x -\cdot),(bf)_{\eta'} - bf\rangle_2
 \mathbf 1_{|y|\leqslant\mathfrak m(n)}|\\
 & \leqslant &
 \frac{\mathfrak m(n)}{3}\|(bf)_{\eta'} - bf\|_{\infty}\sup_{x\in\mathbb R}
 \|K_{\eta}(x -\cdot)\|_1\\
 & \leqslant &
 \frac{\mathfrak m(n)}{3}
 \|K\|_1(\|K\|_1 + 1)\|bf\|_{\infty}
 \leqslant
 \frac{2}{3}\mathfrak m(n)\|K\|_{1}^{2}\|bf\|_{\infty}.
\end{eqnarray*}
On the other hand,
\begin{eqnarray*}
 \mathfrak v_{\eta,\eta'} & = &
 \mathbb E(\langle Y_1K_{\eta}(X_1 -\cdot),(bf)_{\eta'} - bf\rangle_{2}^{2}
 \mathbf 1_{|Y_1|\leqslant\mathfrak m(n)})\\
 & = &
 \mathbb E\left(Y_{1}^{2}\mathbf 1_{|Y_1|\leqslant\mathfrak m(n)}
 \left|\int_{-\infty}^{\infty}K_{\eta}(X_1 - u)((bf)_{\eta'}(u) - (bf)(u))du\right|^2\right)\\
 & \leqslant &
 \mathfrak m(n)^2\|f\|_{\infty}\|K\|_{1}^{2}\|(bf)_{\eta'} - bf\|_{2}^{2}.
\end{eqnarray*}
So, by Bernstein's inequality, there exists a universal constant $\mathfrak c_1 > 0$ such that with probability larger than $1 - 2e^{-\lambda}$,
\begin{eqnarray*}
 |V_{1,n}(\eta,\eta')| & \leqslant &
 \sqrt{\frac{2\lambda}{n}\mathfrak v_{\eta,\eta'}} +
 \frac{\lambda}{n}\mathfrak c_{\eta,\eta'}\\
 & \leqslant &
 \theta\|(bf)_{\eta'} - bf\|_{2}^{2} +
 \mathfrak c_1\frac{\mathfrak m(n)^2}{\theta n}
 \|K\|_{1}^{2}(\|f\|_{\infty} +\|bf\|_{\infty})\lambda.
\end{eqnarray*}
Then, with probability larger than $1 - 2|\mathcal H_n|^2e^{-\lambda}$,
\begin{displaymath}
S_n\leqslant
\mathfrak c_1\frac{\mathfrak m(n)^2}{\theta n}
\|K\|_{1}^{2}(\|f\|_{\infty} +\|bf\|_{\infty})\lambda
\end{displaymath}
where
\begin{displaymath}
S_n :=
\sup_{\eta,\eta'\in\mathcal H_n}\{
|V_{1,n}(\eta,\eta')| -\theta\|(bf)_{\eta'} - bf\|_{2}^{2}\}.
\end{displaymath}
For every $s\in\mathbb R_+$, consider
\begin{displaymath}
\lambda(s) :=
\frac{s}{\mathfrak m(n,\theta)}
\quad{\rm with}\quad
\mathfrak m(n,\theta) :=
\mathfrak c_1\frac{\mathfrak m(n)^2}{\theta n}
\|K\|_{1}^{2}(\|f\|_{\infty} +\|bf\|_{\infty}).
\end{displaymath}
Then, for any $A > 0$,
\begin{eqnarray*}
 \mathbb E(S_n) & \leqslant &
 A +
 \int_{A}^{\infty}\mathbb P(S_n\geqslant s)ds\\
 & \leqslant &
 A + 2\mathfrak c_2|\mathcal H_n|^2
 \mathfrak m(n,\theta)\exp\left(
 -\frac{A}{2\mathfrak m(n,\theta)}
 \right)
\end{eqnarray*}
where $\int_{0}^{\infty}e^{-s/2}ds = 2$. Since there exists a deterministic constant $\mathfrak c_3 > 0$, not depending on $n$ and $h_{\min}$ such that
\begin{displaymath}
\mathfrak m(n,\theta)\leqslant
\mathfrak c_3\frac{\log(n)^2}{n},
\end{displaymath}
by taking $A := 4\mathfrak c_3\log(n)^3/n$,
\begin{displaymath}
\mathbb E(S_n)\leqslant
4\mathfrak c_3\frac{\log(n)^3}{n} +
2\mathfrak c_2
\mathfrak m(n,\theta)\frac{|\mathcal H_n|}{n^2}.
\end{displaymath}
Therefore, since $|\mathcal H_n|\leqslant n$, there exists a deterministic constant $\mathfrak c_4 > 0$, not depending on $n$ and $h_{\min}$, such that
\begin{displaymath}
\mathbb E\left(\sup_{\eta,\eta'\in\mathcal H_n}\{
|V_{1,n}(\eta,\eta')| -\theta\|(bf)_{\eta'} - bf\|_{2}^{2}\}\right)
\leqslant
\frac{\mathfrak c_4}{\theta}\cdot\frac{\log(n)^3}{n}.
\end{displaymath}
Now, let us find a suitable control of
\begin{displaymath}
\mathfrak v_{2,n} :=
\mathbb E\left(
\sup_{\eta,\eta'\in\mathcal H_n}
|V_{2,n}(\eta,\eta')|\right).
\end{displaymath}
By Markov's inequality,
\begin{eqnarray*}
 \mathfrak v_{2,n} & \leqslant &
 2\mathbb E\left(\sup_{\eta\in\mathcal H_n}|Z_{2,1}(\eta,\eta')|\right)\\
 & \leqslant &
 2\mathbb E(Y_{1}^{2}\mathbf 1_{|Y_1| >\mathfrak m(n)})^{1/2}\\
 & &
 \times
 \mathbb E\left(\sup_{\eta,\eta'\in\mathcal H_n}\left|
 \int_{-\infty}^{\infty}K_{\eta}(X_1 - u)((bf)_{\eta'}(u) - (bf)(u))du\right|^2\right)^{1/2}\\
 & \leqslant &
 2\mathbb E(Y_{1}^{4})^{1/4}\mathbb P(\exp(\alpha|Y_1|) > n^4)^{1/4}\|K\|_1
 \sup_{\eta'\in\mathcal H_n}\|(bf)_{\eta'} - bf\|_{\infty}\\
 & \leqslant &
 2\mathbb E(Y_{1}^{4})^{1/4}\mathbb E(\exp(\alpha|Y_1|))^{1/4}\|K\|_{1}^{2}\|bf\|_{\infty}
 \frac{1}{n}.
\end{eqnarray*}
Therefore,
\begin{displaymath}
\mathbb E\left(\sup_{\eta,\eta'\in\mathcal H_n}\{
|V_n(\eta,\eta')| -\theta\|(bf)_{\eta'} - bf\|_{2}^{2}\}\right)
\leqslant
\mathfrak c_V\frac{\log(n)^3}{\theta n}.
\end{displaymath}
%


%
\subsubsection{Proof of Lemma \ref{Lerasle_type_inequality}}
First of all,
\begin{displaymath}
\|(bf)_h - bf\|_{2}^{2} =
\|\widehat{bf}_{n,h} - bf\|_{2}^{2} -
\|\widehat{bf}_{n,h} - (bf)_h\|_{2}^{2} - 2V_n(h,h).
\end{displaymath}
Then, for any $\theta\in (0,1/2)$,
\begin{eqnarray}
 & &
 (1 - 2\theta)\left(\|(bf)_h - bf\|_{2}^{2} +
 \frac{\mathfrak c_{K,Y}}{nh}\right) -\|\widehat{bf}_{n,h} - bf\|_{2}^{2}
 \nonumber\\
 & &
 \label{Lerasle_type_inequality_1}
 \hspace{2cm}\leqslant
 2(|V_n(h,h)| -\theta\|(bf)_h - bf\|_{2}^{2})
 +\Lambda_n(h) - 2\theta\frac{\mathfrak c_{K,Y}}{nh}
\end{eqnarray}
where
\begin{eqnarray*}
 \Lambda_n(h) & := &
 \left|\|\widehat{bf}_{n,h} - (bf)_h\|_{2}^{2} -\frac{\mathfrak c_{K,Y}}{nh}\right|\\
 & = &
 \left|\frac{U_n(h,h)}{n^2} +\frac{W_n(h)}{n} -\frac{1}{n}\|(bf)_h\|_{2}^{2}\right|
\end{eqnarray*}
with
\begin{displaymath}
W_n(h) :=
\frac{1}{n}\sum_{k = 1}^{n}(Z_k(h) -\mathbb E(Z_k(h)))
\end{displaymath}
and
\begin{displaymath}
Z_k(h) :=\|Y_kK_h(X_k -\cdot) - (bf)_h\|_{2}^{2}
\textrm{ $;$ }
\forall k\in\{1,\dots,n\},
\end{displaymath}
because
\begin{eqnarray*}
 \mathbb E(Z_1(h)) & = &
 \sigma^2\int_{-\infty}^{\infty}\mathbb E(K_h(X_1 - u)^2)du +
 \int_{-\infty}^{\infty}\mathbb E(b(X_1)^2K_h(X_1 - u)^2)du\\
 & & -
 2\int_{-\infty}^{\infty}\mathbb E(b(X_1)K_h(X_1 - u))(bf)_h(u)du +
 \int_{-\infty}^{\infty}(bf)_h(u)^2du\\
 & = &
 \frac{\|K\|_{2}^{2}}{h}(\sigma^2 +\mathbb E(b(X_1)^2)) -\|(bf)_h\|_{2}^{2} =
 \frac{\mathfrak c_{K,Y}}{h} -\|(bf)_h\|_{2}^{2}.
\end{eqnarray*}
Consider $\mathfrak m(n) := 2\log(n)/\alpha$ and note that $W_n(h) = W_{1,n}(h) + W_{2,n}(h)$, where
\begin{displaymath}
W_{i,n}(h) :=
\frac{1}{n}\sum_{k = 1}^{n}(g_{h}^{i}(X_k,Y_k) -\mathbb E(g_{h}^{i}(X_k,Y_k)))
\textrm{ $;$ }i = 1,2
\end{displaymath}
with, for every $x,y\in\mathbb R$,
\begin{displaymath}
g_{h}^{1}(x,y) :=
\|yK_h(x -\cdot) - (bf)_h\|_{2}^{2}\mathbf 1_{|y|\leqslant\mathfrak m(n)}
\end{displaymath}
and
\begin{displaymath}
g_{h}^{2}(x,y) :=
\|yK_h(x -\cdot) - (bf)_h\|_{2}^{2}\mathbf 1_{|y| >\mathfrak m(n)}.
\end{displaymath}
Note also that
\begin{displaymath}
\|(bf)_h\|_2\leqslant\|K_h\|_2\int_{-\infty}^{\infty}|b(x)|f(x)dx
\leqslant
\frac{\|K\|_2}{h^{1/2}}\mathbb E(|b(X_1)|)
\leqslant
\left(\frac{\mathfrak c_{K,Y}}{h}\right)^{1/2}
\end{displaymath}
and
\begin{displaymath}
\|(bf)_h\|_2\leqslant\|K_h\|_1\left(\int_{-\infty}^{\infty}b(x)^2f(x)^2dx\right)^{1/2}
\leqslant
\|K\|_1\|f\|_{\infty}^{1/2}\mathbb E(b(X_1)^2)^{1/2}.
\end{displaymath}
In order to apply Bernstein's inequality to $g_{h}^{1}(X_k,Y_k)$, $k = 1,\dots,n$, let us find suitable controls of
\begin{displaymath}
\mathfrak c_h :=
\frac{\|g_{h}^{1}\|_{\infty}}{3}
\textrm{ and }
\mathfrak v_h :=
\mathbb E(g_{h}^{1}(X_1,Y_1)^2).
\end{displaymath}
On the one hand,
\begin{eqnarray*}
 \mathfrak c_h & = &
 \frac{1}{3}\sup_{x,y\in\mathbb R}
 \|yK_h(x -\cdot) - (bf)_h\|_{2}^{2}
 \mathbf 1_{|y|\leqslant\mathfrak m(n)}\\
 & \leqslant &
 \frac{2}{3}\left(\mathfrak m(n)^2\frac{\|K\|_{2}^{2}}{h} +\frac{\mathfrak c_{K,Y}}{h}\right).
\end{eqnarray*}
On the other hand,
\begin{eqnarray*}
 \mathfrak v_h
 & \leqslant &
 2\mathbb E(Z_1(h)(\|Y_1K_h(X_1 -\cdot)\|_{2}^{2}\mathbf 1_{|Y_1|\leqslant\mathfrak m(n)} +\|(bf)_h\|_{2}^{2}))\\
 & \leqslant &
 \frac{2}{h}\mathbb E(Z_1(h))(\|K\|_{2}^{2}\mathfrak m(n)^2 +\mathfrak c_{K,Y})
 \leqslant
 2(\|K\|_{2}^{2} +\mathfrak c_{K,Y})
 \frac{\mathfrak c_{K,Y}}{hh_{\min}}\mathfrak m(n)^2.
\end{eqnarray*}
So, by Bernstein's inequality, there exists a universal constant $\mathfrak c_1 > 0$ such that with probability larger than $1 - 2e^{-\lambda}$,
\begin{eqnarray*}
 |W_{1,n}(h)| & \leqslant &
 \sqrt{\frac{2\lambda}{n}\mathfrak v_h} +
 \frac{\lambda}{n}\mathfrak c_h\\
 & \leqslant &
 \theta\frac{\mathfrak c_{K,Y}}{h} +
 \mathfrak c_1\frac{\mathfrak m(n)^2}{\theta nh_{\min}}(\|K\|_{2}^{2} +\mathfrak c_{K,Y})\lambda.
\end{eqnarray*}
Then, with probability larger than $1 - 2|\mathcal H_n|e^{-\lambda}$,
\begin{displaymath}
S_n(h_{\min})\leqslant
\mathfrak c_1\frac{\mathfrak m(n)^2}{\theta n^2h_{\min}}(\|K\|_{2}^{2} +\mathfrak c_{K,Y})\lambda
\end{displaymath}
where
\begin{displaymath}
S_n(h_{\min}) :=
\sup_{h\in\mathcal H_n}\left\{
\frac{|W_{1,n}(h)|}{n} -\theta\frac{\mathfrak c_{K,Y}}{nh}\right\}.
\end{displaymath}
For every $s\in\mathbb R_+$, consider
\begin{displaymath}
\lambda(s) :=
\frac{s}{\mathfrak m(n,h_{\min},\theta)}
\quad{\rm with}\quad
\mathfrak m(n,h_{\min},\theta) :=
\mathfrak c_1\frac{\mathfrak m(n)^2}{\theta n^2h_{\min}}(\|K\|_{2}^{2} +\mathfrak c_{K,Y}).
\end{displaymath}
Then, for any $A > 0$,
\begin{eqnarray*}
 \mathbb E(S_n(h_{\min})) & \leqslant &
 A +
 \int_{A}^{\infty}\mathbb P(S_n(h_{\min})\geqslant s)ds\\
 & \leqslant &
 A + 2\mathfrak c_2|\mathcal H_n|
 \mathfrak m(n,h_{\min},\theta)\exp\left(
 -\frac{A}{2\mathfrak m(n,h_{\min},\theta)}
 \right)
\end{eqnarray*}
where $\mathfrak c_2 :=\int_{0}^{\infty}e^{-s/2}ds = 2$. Since there exists a deterministic constant $\mathfrak c_3 > 0$, not depending on $n$ and $h_{\min}$, such that
\begin{displaymath}
\mathfrak m(n,\theta)\leqslant
\mathfrak c_3\frac{\log(n)^2}{n},
\end{displaymath}
by taking $A := 2\mathfrak c_3\log(n)^3/n$,
\begin{displaymath}
\mathbb E(S_n(h_{\min}))\leqslant
2\mathfrak c_3\frac{\log(n)^3}{n} +
2\mathfrak c_2
\mathfrak m(n,h_{\min},\theta)\frac{|\mathcal H_n|}{n}.
\end{displaymath}
Therefore, since $|\mathcal H_n|\leqslant n$, there exists a deterministic constant $\mathfrak c_4 > 0$, not depending on $n$ and $h_{\min}$, such that
\begin{displaymath}
\mathbb E\left(\sup_{h\in\mathcal H_n}\left\{
\frac{|W_{1,n}(h)|}{n} -\theta\frac{\mathfrak c_{K,Y}}{nh}\right\}\right)
\leqslant
\frac{\mathfrak c_4}{\theta}\cdot\frac{\log(n)^3}{n}.
\end{displaymath}
Now, by Markov's inequality,
\begin{eqnarray*}
 \mathbb E\left(\sup_{h\in\mathcal H_n}\frac{|W_{2,n}(h)|}{n}\right) & \leqslant &
 \frac{2}{n}\mathbb E\left(\sup_{h\in\mathcal H_n}|Z_1(h)|\mathbf 1_{|Y_1| >\mathfrak m(n)}\right)\\
 & \leqslant &
 \frac{4}{n}\mathbb E\left(\sup_{h\in\mathcal H_n}(\|Y_1K_h(X_1 -\cdot)\|_{2}^{2} +\|(bf)_h\|_{2}^{2})\mathbf 1_{|Y_1| >\mathfrak m(n)}\right)\\
 & \leqslant &
 \frac{4}{nh_{\min}}(\|K\|_{2}^{2}\mathbb E(Y_{1}^{4})^{1/2} +
 \mathfrak c_{K,Y})\mathbb P(|Y_1| >\mathfrak m(n))^{1/2}\\
 & \leqslant &
 4(\|K\|_{2}^{2}\mathbb E(Y_{1}^{4})^{1/2} +
 \mathfrak c_{K,Y})
 \mathbb E(\exp(\alpha|Y_1|))^{1/2}
 \frac{1}{\theta n^2h_{\min}}.
\end{eqnarray*}
Then, there exists a deterministic constant $\mathfrak c_5 > 0$, not depending on $n$ and $h_{\min}$, such that
\begin{displaymath}
\mathbb E\left(\sup_{h\in\mathcal H_n}\left\{
\frac{|W_n(h)|}{n} -\theta\frac{\mathfrak c_{K,Y}}{nh}\right\}\right)
\leqslant
\frac{\mathfrak c_5}{\theta}\cdot\frac{\log(n)^3}{n}.
\end{displaymath}
Therefore, by Lemma \ref{bound_U}, there exists a deterministic constant $\mathfrak c_6 > 0$, not depending on $n$ and $h_{\min}$, such that
\begin{eqnarray*}
 \mathbb E\left(\sup_{h\in\mathcal H_n}\left\{
 \Lambda_n(h) - 2\theta\frac{\mathfrak c_{K,Y}}{nh}\right\}\right) & \leqslant &
 \frac{\mathfrak c_U}{\theta}\cdot\frac{\log(n)^5}{n} +\frac{\mathfrak c_5}{\theta}\cdot\frac{\log(n)^3}{n}\\
 & & +
 \frac{1}{n}\|K\|_{1}^{2}\|f\|_{\infty}\mathbb E(b(X_1)^2)\\
 & \leqslant &
 \frac{\mathfrak c_6}{\theta}\cdot\frac{\log(n)^5}{n}.
\end{eqnarray*}
Moreover, by Lemma \ref{bound_V},
\begin{displaymath}
\mathbb E\left(\sup_{h\in\mathcal H_n}
\{|V_n(h,h)| -\theta\|(bf)_h - bf\|_{2}^{2}\}\right)
\leqslant\frac{\mathfrak c_V}{\theta}\cdot\frac{\log(n)^3}{n}.
\end{displaymath}
In conclusion, by Inequality (\ref{Lerasle_type_inequality_1}),
\begin{eqnarray*}
 & &
 \mathbb E\left(
 \sup_{h\in\mathcal H_n}\left\{
 \|(bf)_h - bf\|_{2}^{2} +\frac{\mathfrak c_{K,Y}}{nh} -\frac{1}{1 - 2\theta}
 \|\widehat{bf}_{n,h} - bf\|_{2}^{2}\right\}\right)\\
 & &
 \hspace{7cm}
 \leqslant
 \frac{\mathfrak c_L}{\theta(1 - 2\theta)}
 \cdot\frac{\log(n)^5}{n}.
\end{eqnarray*}
%


%

%


%
\appendix
\section{Additional simulation results}\label{Append}
\begin{table}[h!]
\begin{tabular}{c|cccc}
 $n$ & $b_1$ & $b_2$ & $b_3$ & $b_4$ \\ \hline
 $250$ &  0.32 &   0.29   & 0.14  &  0.32 \\
 & {\small (0.09)} &   {\small (0.08)}   & {\small (0.03)}  & {\small (0.10)} \\
 $500$ &  0.29 &   0.25  &  0.12 &   0.27 \\
 & {\small (0.07)}  &  {\small (0.06)} &   {\small (0.02)}  &  {\small (0.08)} \\
 $1000$ &  0.25 &  0.22  &  0.10 &   0.22 \\
 & {\small (0.05)} &  {\small (0.04)} &  {\small (0.01)}  & {\small (0.05)} \\
\end{tabular}
\caption{Mean of selected bandwidth  (with std in parenthesis below) with the CV method for NW-single bandwidth estimator of $b$, $\sigma = 0.7$, $X\sim\mathcal N(0,1)$, 200 repetitions. }\label{tab_b_bandwithNWsig07}
\end{table}
\begin{table}[h!]
\hspace{-0cm}\begin{tabular}{c|ccc|ccc}
 & \multicolumn{3}{c}{$b_1f$} & \multicolumn{3}{c}{$b_2f$}\\
 $n$ & PCO & CV & Or & PCO & CV & Or\\ \hline
 250 & 0.39 & 0.46 & 0.20 & 0.51 & 0.57 & 0.25\\
 & {\small (0.35)} & {\small (0.56)} & {\small (0.19)} & {\small (0.44)} & {\small (0.81)} &  {\small (0.23)}\\
 $500$ & 0.19 & 0.27 & 0.10 & 0.27 & 0.32 & 0.13\\
 & {\small (0.16)} & {\small (0.37)} & {\small (0.09)} & {\small (0.21)} & {\small (0.38)} & {\small (0.13)}\\
 $1000$ & 0.11 & 0.18 & 0.06 & 0.15 & 0.23 & 0.07\\
 & {\small (0.09)} & {\small (0.30)} & {\small (0.05)} & {\small (0.12)} & {\small (0.44)} & {\small (0.06)}\\
 \multicolumn{6}{c}{}\\
\end{tabular}
\begin{tabular}{c|ccc|ccc}
 & \multicolumn{3}{c}{$b_3f$} & \multicolumn{3}{c}{$b_4f$}\\
 $n$ & PCO & CV & Or & PCO & CV & Or\\ \hline
 250 & 0.61 & 0.71 & 0.42 & 0.20 & 0.22 & 0.10\\
 & {\small (0.37)} & {\small (0.70)} & {\small (0.27)} & {\small (0.16)} & {\small (0.26)} & {\small (0.09)}\\
 $500$ & 0.31 & 0.34 & 0.22 & 0.10 & 0.13 & 0.05\\
 & {\small (0.18)} & {\small (0.30)} & {\small (0.14)} & {\small (0.08)} & {\small (0.17)} & {\small (0.04)}\\
 $1000$ & 0.16 & 0.24 & 0.11 & 0.05 & 0.08 & 0.03\\
 & {\small (0.11)} & {\small (0.41)} & {\small (0.07)} & {\small (0.04)} & {\small (0.10)} & {\small (0.02)}\\
 \multicolumn{6}{c}{}\\
\end{tabular}
\caption{100*MISE (with 100*std in parenthesis below) for the estimation of $bf$, 200 repetitions, $X\sim\gamma(3,2)/5$ and $\sigma = 0.1$. Same columns as in Table \ref{tab_bf_XGauss}.}\label{tab_bf_XGamma}
\end{table}
\begin{table}[h!]
\hspace{-0cm}\begin{tabular}{c|ccc|ccc}
 & \multicolumn{3}{c}{$b_1f$} & \multicolumn{3}{c}{$b_2f$}\\
 $n$ & PCO & CV & Or & PCO & CV & Or\\ \hline
 250 & 0.91 & 0.85 & 0.49 & 0.83 & 0.93 & 0.47\\
 & {\small (0.84)} & {\small (0.74)} & {\small (0.40)} & {\small (0.74)} & {\small (1.24)} &  {\small (0.36)}\\
 $500$ & 0.43 & 0.44 & 0.23 & 0.47 & 0.48 & 0.25\\
 & {\small (0.30)} & {\small (0.39)} & {\small (0.17)} & {\small (0.32)} & {\small (0.53)} & {\small (0.19)}\\
 $1000$ & 0.22 & 0.23 & 0.13 & 0.24 & 0.24 & 0.13\\
 & {\small (0.14)} & {\small (0.23)} & {\small (0.07)} & {\small (0.16)} & {\small (0.22)} & {\small (0.09)}\\
 \multicolumn{6}{c}{}\\
\end{tabular}
\begin{tabular}{c|ccc|ccc}
 & \multicolumn{3}{c}{$b_3f$} & \multicolumn{3}{c}{$b_4f$}\\
 $n$ & PCO & CV & Or & PCO & CV & Or\\ \hline
 250 & 1.21 & 1.18 & 0.80 & 0.66 & 0.60 & 0.37\\
 & {\small (0.87)} & {\small (0.86)} & {\small (0.54)} & {\small (0.55)} & {\small (0.49)} &  {\small (0.29)}\\
 $500$ & 0.56 & 0.55 & 0.39 & 0.34 & 0.32 & 0.18\\
 & {\small (0.30)} & {\small (0.42)} & {\small (0.22)} & {\small (0.25)} & {\small (0.29)} & {\small (0.14)}\\
 $1000$ & 0.28 & 0.29 & 0.20 & 0.17 & 0.17 & 0.07\\
 & {\small (0.16)} & {\small (0.26)} & {\small (0.12)} & {\small (0.11)} & {\small (0.17)} & {\small (0.07)}\\
 \multicolumn{6}{c}{}\\
\end{tabular}
\caption{100*MISE (with 100*std in parenthesis below) for the estimation of $bf$, 200 repetitions, $X\sim\gamma(3,2)/5$ and $\sigma = 0.7$. Same columns as in Table \ref{tab_bf_XGauss}.}\label{tab_bf_XGamma07}
\end{table}
\begin{table}[h!]
\hspace{-0cm}\begin{tabular}{c|ccc|ccc}
 & \multicolumn{3}{c}{$b_1$} & \multicolumn{3}{c}{$b_2$}\\
 $n$ & CV & PCO & Or & CV & PCO & Or\\ \hline
 $250$ & 0.22 & 0.66 & 0.39 & 0.56 & 8.75 & 1.74\\
 & {\small (0.15)} & {\small (0.42)} & {\small (0.34)} & {\small (0.78)} & {\small (39.2)} &  {\small (2.50)}\\ 
 $500$ & 0.11 & 0.30 & 0.17 & 0.43 & 1.35 & 0.67\\
 & {\small (0.06)} & {\small (0.23)} & {\small (0.14)} & {\small (0.16)} & {\small (2.29)} & {\small (0.91)}\\
 $1000$ & 0.07 & 0.15 & 0.09 & 0.12 & 0.38 & 0.28\\
 & {\small (0.05)} & {\small (0.09)} & {\small (0.09)} & {\small (0.08)} & {\small (0.41)} & {\small (0.30)}\\
 \multicolumn{6}{c}{} \\
\end{tabular}
\begin{tabular}{c|ccc|ccc}
 & \multicolumn{3}{c}{$b_3$} & \multicolumn{3}{c}{$b_4$} \\
 $n$ & CV & PCO & Or & CV & PCO & Or\\ \hline
 $250$ & 1.07 & 7.50 & 3.80 & 0.24 & 0.59 & 0.38\\
 & {\small (1.42)} & {\small (20.6)} & {\small (4.17)} & {\small (0.16)} & {\small (0.28)} & {\small (0.28)}\\
 $500$ & 0.42 & 2.38 & 1.72 & 0.13 & 0.33 & 0.19\\
 & {\small (0.26)} & {\small (1.67)} & {\small (1.50)} & {\small (0.07)} & {\small (0.15)} & {\small (0.13)}\\
 $1000$ & 0.21 & 1.05 & 0.74 & 0.08 & 0.19 & 0.11\\
 & {\small (0.11)} & {\small (0.67)} & {\small (0.56)} & {\small (0.05)} & {\small (0.08)} & {\small (0.08)}\\
\multicolumn{6}{c}{}\\
\end{tabular}
\caption{100*MISE (with 100*std in parenthesis below) for the estimation of $b_i$, $i=1, \dots, 4$, 200 repetitions, $X\sim\gamma(3,2)/5$, $\sigma = 0.1$. CV and PCO are the two competing methods. Column "Or" gives the average of ISE for the ratio of the two best estimators of $bf$ and $f$ in the collection.}\label{tab_b_XGamma}
\end{table}
\begin{table}[h!]
\hspace{-0cm}\begin{tabular}{c|ccc|ccc}
 & \multicolumn{3}{c}{$b_1$} & \multicolumn{3}{c}{$b_2$}\\
 $n$ & CV & PCO & Or & CV & PCO & Or\\ \hline
 $250$ & 4.86 & 7.99 & 6.42 & 6.77 & 16.0 & 7.08\\
 & {\small (4.72)} & {\small (9.41)} & {\small (7.65)} & {\small (6.04)} & {\small (43.7)} &  {\small (8.63)}\\
 $500$ & 2.58 & 2.87 & 3.12 & 3.74 & 3.85 & 3.37\\
 & {\small (2.18)} & {\small (2.19)} & {\small (2.95)} & {\small (4.06)} & {\small (4.06)} & {\small (2.88)}\\
 $1000$ & 1.51 & 1.35 & 1.47 & 1.94 & 1.62 & 1.67\\
 & {\small (1.42)} & {\small (1.16)} & {\small (1.19)} & {\small (1.69)} & {\small (1.68)} & {\small (1.53)}\\
 \multicolumn{6}{c}{}\\
\end{tabular}
\begin{tabular}{c|ccc|ccc}
 & \multicolumn{3}{c}{$b_3$} & \multicolumn{3}{c}{$b_4$}\\
 $n$ & CV & PCO & Or & CV & PCO & Or\\ \hline
 $250$ & 10.6 & 19.4 & 11.1 & 4.54 & 7.15 & 6.29\\
 & {\small (10.6)} & {\small (19.4)} & {\small (11.1)} & {\small (4.37)} & {\small (7.95)} & {\small (9.28)}\\
 $500$ & 5.71 & 5.84 & 5.51 & 2.52 & 2.84 & 2.97\\
 & {\small (3.45)} & {\small (5.26)} & {\small (7.10)} & {\small (2.16)} & {\small (2.07)} & {\small (2.89)}\\
 $1000$ & 3.17 & 2.70 & 2.47 & 1.50 & 1.38 & 1.41\\
 & {\small (2.02)} & {\small (1.69)} & {\small (1.61)} & {\small (1.44)} & {\small (1.14)} & {\small (1.17)}\\
 \multicolumn{6}{c}{}\\
\end{tabular}
\caption{100*MISE (with 100*std in parenthesis below) for the estimation of $b_i$, $i=1, \dots, 4$, 200 repetitions, $X\sim\gamma(3,2)/5$, $\sigma = 0.7$. CV and PCO are the two competing methods. Column "Or" gives the average of ISE for the ratio of the two best estimators of $bf$ and $f$ in the collection.}\label{tab_b_XGamma07}
\end{table}

\begin{thebibliography}{99}
 \bibitem{BARAUD02} Y. Baraud. Model selection for regression on a random design. ESAIM Probab. Statist. {\bf 6}, 2002.
 \bibitem{CHAGNY16} G. Chagny. An introduction to nonparametric adaptive estimation. {\em Grad. J. Math.} {\bf 1}, 105-120, 2016.
 \bibitem{CLW17} X. Chang, S.-B. Lin and Y. Wang. Divide and conquer local average regression. {\em Electron. J. Stat.} {\bf 11}, 1, 1326-1350, 2017.
 \bibitem{COMTE17} F. Comte. \textit{Estimation non-param\'etrique.} Spartacus IDH, 2nd edition, 2017.
 \bibitem{CM20} F. Comte and N. Marie. Bandwidth Selection for the Wolverton-Wagner Estimator. {\em J. Statist. Plann. Inference} {\bf 207}, 198-214, 2020.
 \bibitem{CR16} F. Comte and T. Rebafka. Nonparametric weighted estimators for biased data. \textit{J. Statist. Plann. Inference} {\bf 174}, 104-128, 2016.
 \bibitem{GL11} A. Goldenshluger and O. Lepski. Bandwidth Selection in Kernel Density Estimation: Oracle Inequalities and Adaptive Minimax Optimality. \textit{The Annals of Statistics} {\bf 39}, 1608-1632, 2011.
 \bibitem{GKKW02} L. Gy\"orfi, M. Kohler, A. Krzyzak and H. Walk. {\em A distribution-free theory of nonparametric regression.} Springer Series in Statistics. Springer-Verlag, New York, 2002.
 \bibitem{HTY98} W. H\"ardle, A. Tsybakov and L. Yang. Nonparametric vector autoregression. {\em J. Statist. Plann. Inference} {\bf 68}, 2, 221-245, 1998. 
 \bibitem{HRB03} C. Houdr\'e and P. Reynaud-Bouret. Exponential Inequalities, with Constants, for U-Statistics of Order Two. \textit{Stochastic Inequalities and Applications, Progr. Probab.} {\bf 56}, Birkh\"auser, Basel, 55-69, 2003.
  \bibitem{JW95} M. C. Jones and M. P. Wand. \textit{Kernel smoothing.} Monographs on Statistics and Applied Probability {\bf 60}, Chapman and Hall, Ltd., London, 1995.
 \bibitem{KR05} T. Klein and E. Rio. Concentration Around the Mean for Maxima of Empirical Processes. \textit{The Annals of Probability} {\bf 33}, 1060-1077, 2005.
 \bibitem{LMR17} C. Lacour, P. Massart and V. Rivoirard. Estimator Selection: a New Method with Applications to Kernel Density Estimation. \textit{Sankhya} A {\bf 79}, 298-335, 2017.
 \bibitem{LMRV} C. Lacour, P. Massart, V. Rivoirard and S. Varet. Numerical Performance of Penalized Comparison to Overfitting for Multivariate Kernel Density Estimation. \textit{Preprint}, Hal-02002275.
\bibitem{NADARAYA64} E.A.  Nadaraya. On a regression estimate. (Russian) {\em Verojatnost. i Primenen.} {\bf 9}, 157-159, 1964.
 \bibitem{PARZEN62} E. Parzen. On the Estimation of a Probability Density Function and the Mode. \textit{The Annals of Mathematical Statistics} {\bf 33}, 1065-1076, 1962.
 \bibitem{ROSENBLATT56} M. Rosenblatt. Remarks on some Nonparametric Estimates of a Density Function. \textit{Ann. Math. Statist.} {\bf 27}, 832-837, 1956.
 \bibitem{STONE82} C. J. Stone. Optimal global rates of convergence for nonparametric regression. \textit{The Annals of Statistics} {\bf 10}, no. 4, 1040-1053, 1982.
 \bibitem{TSYBAKOV09} A. Tsybakov. \textit{Introduction to Nonparametric Estimation.} Springer, 2009.
 \bibitem{WATSON64} G. S. Watson. Smooth regression analysis. {\em Sankhya} A {\bf 26}, 359-372, 1964.
\end{thebibliography}
\end{document}